\newtheorem{theorem}{Theorem}[section]
\newtheorem{lemma}[theorem]{Lemma}
\newtheorem{proposition}[theorem]{Proposition}
\newtheorem{corollary}[theorem]{Corollary}
\newtheorem{definition}[theorem]{Definition}
\theoremstyle{definition}
\newtheorem{example}[theorem]{Example}
\newtheorem{remark}[theorem]{Remark}
\newcommand{\lie}[1]{\mathfrak{#1}}
\newcommand{\tg}{\mathsf t}
\newcommand{\s}{\mathsf s}
\newcommand{\rr}{\rightrightarrows}
\newcommand{\TT}{{\mathbb{T}}}
\newcommand{\m}{{\mathsf{m}}}
\newcommand{\R}{\mathbb{R}}
\newcommand{\inv}{^{-1}}
\newcommand{\mx}{\mathfrak{X}}
\newcommand{\dr}{\mathbf{d}}
\newcommand{\ldr}[1]{{{\pounds}}_{#1}}
\newcommand{\ip}[1]{{\mathbf{i}}_{#1}}
\newcommand{\an}[1]{\arrowvert_{#1}}
\newcommand{\lb}{\llbracket} 
\newcommand{\rb}{\rrbracket}
\DeclareMathOperator{\Hom}{Hom}
\DeclareMathOperator{\pr}{pr}
\DeclareMathOperator{\id}{id}
\DeclareMathOperator{\graf}{graph}
\DeclareMathOperator{\Exp}{Exp}
\DeclareMathOperator{\Skew}{Skew}
\begin{document}
\title{Dirac groupoids and Dirac bialgebroids}

\author{M. Jotz Lean}
\address{School of Mathematics and Statistics, The University of Sheffield}
\email{M.Jotz-Lean@sheffield.ac.uk}
\thanks{This work was partially supported by Swiss NSF grant 200021-121512 and by a Swiss NSF fellowship for prospective
researchers (PBELP2\_137534) for research conducted
at UC Berkeley, the hospitality of which the author is thankful for. }

\subjclass[2010]{Primary: 58H05, 53D17; secondary: 70G45}

\begin{abstract}
  We describe infinitesimally Dirac groupoids via geometric objects
  that we call Dirac bialgebroids.  In the two well-understood special
  cases of Poisson and presymplectic groupoids, the Dirac bialgebroids
  are equivalent to the Lie bialgebroids and IM-$2$-forms,
  respectively. In the case of multiplicative involutive distributions
  on Lie groupoids, we find new properties of infinitesimal ideal
  systems.
\end{abstract}

\maketitle

\tableofcontents

\section{Introduction}
Courant and Weinstein \cite{CoWe88}, and independently Dorfman
\cite{Dorfman87}, introduced the notion of Dirac structure in the
late 1980's. Courant and Weinstein were studying work of Littlejohn on
the ``guiding centre of motion'' of a particle in a magnetic field
\cite{Littlejohn80}. They introduced Dirac structures as a unified
approach to presymplectic and Poisson manifolds.  Dorfman showed Dirac
structures to be useful in the theory of infinite dimensional
Hamiltonian systems. Because the theory of Hamiltonian systems with
constraints can be well formulated using these objects, they were
named after Dirac and his theory of constraints \cite{Dirac67}.
Poisson and presymplectic manifolds and involutive distributions are
the corner cases of Dirac manifolds.  

To be more explicit, let $M$ be
a smooth manifold. Then the direct sum $TM\oplus T^*M$ of vector
bundles over $M$ is endowed with a bracket $\lb\cdot\,,\cdot\rb$ and a
pairing $\langle\cdot\,,\cdot\rangle$ given by
\[\lb(X_1,\theta_1), (X_2,\theta_2)\rb=([X_1,X_2],\ldr{X_1}\theta_2-\ip{X_2}\dr\theta_1)
\]
and
\[\langle (X_1,\theta_1), (X_2,\theta_2)\rangle=\theta_1(X_2)+\theta_2(X_1)
\]
for all vector fields $X_1,X_2\in \mx(M)$ and one-forms
$\theta_1,\theta_2\in\Omega^1(M)$.  A Dirac structure on $M$ is a subbundle
$\mathsf D\subseteq TM\oplus T^*M$ that is maximally isotropic
relative to the pairing, and the sections of which are closed under
the bracket.  The graphs of the vector bundle morphisms
$\pi^\sharp\colon T^*M\to TM$ and $\omega^\flat\colon TM\to T^*M$ associated to a
Poisson bivector $\pi$ and, respectively, a closed $2$-form $\omega$
on $M$ are two examples of Dirac structures.

\medskip

Here we are interested in Dirac groupoids, i.e.~Lie groupoids
with Dirac structures that are compatible with the multiplication.
As in the Lie group
case, a geometric structure on a Lie groupoid, that is compatible with
the multiplication, has a counterpart on the Lie algebroid of the
groupoid.  This counterpart can simply be understood as the
differential of the structure, which is a structure of the same type
but on the Lie algebroid. For instance, the counterpart of a
multiplicative Poisson structure on a Lie groupoid is a Poisson
structure on the Lie algebroid of the groupoid, together with some
compatibility conditions between the Poisson and the Lie algebroid
structures \cite{MaXu00}. More generally, Ortiz has proved in his
thesis that the Lie algebroid of a Dirac groupoid is a Dirac
algebroid, i.e.~a Lie algebroid with a Dirac structure that is
compatible with the Lie algebroid structure (an LA-Dirac structure).
This defines a bijection between multiplicative Dirac structures on a
source simply connected Lie groupoid and LA-Dirac structures on its
Lie algebroid.

On the other hand, we observe in the literature that the infinitesimal
counterpart of a multiplicative geometric object is often described as
a geometric object of completely different, to some extent more
simple, nature.  A multiplicative distribution on a Lie group
corresponds for instance to an ideal in its Lie algebra
\cite{Ortiz08,Jotz11}. Knowing this, it would be rather cumbersome to
infinitesimally describe a multiplicative distribution on a Lie group
as a distribution on the Lie algebra, together with some compatibility
condition in terms of the tangent Lie algebra.

To illustrate this, let us recall how Poisson and presymplectic
groupoids, and involutive multiplicative distributions on Lie
groupoids can be described infinitesimally.  Weinstein introduced
Poisson groupoids as a simultaneous generalisation of Poisson Lie
groups and symplectic groupoids \cite{Weinstein88b}. He proved that a
multiplicative Poisson structure on a Lie groupoid induces a Lie
algebroid structure on the dual of the Lie algebroid of the Lie
groupoid.  Mackenzie and Xu then introduced the notion of \emph{Lie
  bialgebroid} to encode the compatibility condition of this pair of
Lie algebroids in duality \cite{MaXu94}. Then they showed that there
is a one-to-one correspondence between source simply connected Poisson
groupoids and integrable Lie bialgebroids\footnote{A Lie bialgebroid
  $(A,A^*)$ is integrable if $A$ is an integrable Lie algebroid.}
\cite{MaXu00}.  Lie bialgebroids describe hence infinitesimally
Poisson groupoids.  Liu, Weinstein and Xu introduced at the same time
the notion of \emph{Courant algebroid}. With this new notion, a Lie
bialgebroid could be defined as a pair of Lie algebroids which are two
transverse Dirac structures in a Courant algebroid \cite{LiWeXu97}.

Presymplectic groupoids were originally introduced by Xu \cite{Xu04}
under the name ``quasi-symplectic groupoids'' and then used by
Bursztyn, Crainic, Weinstein and Zhu in a work motivated by the
integration of Dirac structures twisted by closed 3-forms
\cite{BuCrWeZh04}. There a presymplectic groupoid is a Lie groupoid
with a multiplicative closed 2-form satisfying some additional
regularity conditions.  Afterwards the terminology slightly
changed and now one often calls presymplectic a Lie groupoid with any
multiplicative closed 2-form.  The infinitesimal description of a
presymplectic groupoid is an IM-2-form, i.e.~a very particular vector
bundle morphism from the Lie algebroid of the groupoid to the
cotangent bundle of the manifold of units \cite{BuCrWeZh04,BuCaOr09}.

Infinitesimal ideal systems, the ``ideals'' in Lie algebroids
\cite{JoOr14}, are the infinitesimal counterpart of multiplicative
distributions \cite{Hawkins08, JoOr14}. Multiplicative distributions
have appeared to be useful as the polarisations in Hawkins' groupoid
approach to quantisation \cite{Hawkins08}, and wide multiplicative
distributions on Lie groupoids were also studied in a modern approach
to Cartan's work on pseudogroups \cite{CrSast12}.

\medskip Lie bialgebroids, IM-2-forms and infinitesimal ideal systems
seem by nature very different, although Poisson groupoids,
presymplectic groupoids and multiplicative distributions on Lie
groupoids are three basic classes of Dirac groupoids.

Our goal is to understand how Lie bialgebroids, IM-$2$-forms and
infinitesimal ideal systems are in fact the corner cases of an
infinitesimal description of Dirac groupoids.  In \cite{Jotz13b} we
constructed a Courant algebroid $\mathsf B$ over $M$ associated to a
Dirac groupoid $(G\rr M, \mathsf D)$, such that the space of units $U$
of the Dirac structure seen as a groupoid is naturally embedded as a
Dirac structure in $\mathsf B$. We classified\footnote{This result
  extends the classification of Poisson homogeneous spaces of Poisson
  groupoids $(G\rr M, \pi)$ via a certain class of Dirac structures in
  the Courant algebroid $A\oplus A^*$ defined by the Lie bialgebroid
  $(A,A^*)$ of the Poisson groupoid \cite{LiWeXu98}.}  the Dirac
homogeneous spaces of $(G\rr M, \mathsf D)$ via Dirac structures in
$\mathsf B$, and we conjectured consequently that the whole
information about $\mathsf D$ was contained in the Manin pair
$(\mathsf B, U)$.  In \cite{Jotz13a} we studied linear splittings of
the double vector bundle $TA\oplus T^*A$ over a Lie algebroid $A$, and
we proved that these correspond to a certain class of \emph{Dorfman
  connections}, which that paper introduced. We studied linear
splittings adapted to LA-Dirac structures (the linearisations of
multiplicative Dirac structures on Lie groupoids), and completely
understood LA-Dirac structures in terms of the corresponding Dorfman
connections, showing how to reconstruct an LA-Dirac structure from
some simple data.

Here we build on the results in those two papers to prove how the
Manin pair $(\mathsf B, U)$ associated to a Dirac groupoid is in fact
equivalent to this Dirac groupoid, and which Manin pairs appear in
this manner as the infinitesimal version of Dirac groupoids.  The
examples that we discuss show that our theorem provides
a unified proof for the results in
 \cite{MaXu00, BuCaOr09}.

\bigskip

Let us summarise our main result in a more detailed manner.  Let $G$
be a Lie groupoid over $M$, with Lie algebroid $A$.  Then the bundle
$TG\oplus T^*G$ has the structure of a Lie groupoid over $TM\oplus
A^*$.  A Dirac structure $\mathsf D$ on $G$ is multiplicative if it is
a subgroupoid of $TG\oplus T^*G$.  The set of units of $\mathsf D$
seen as a groupoid is a subbundle $\iota\colon U \hookrightarrow
TM\oplus A^*$. We proved in \cite{Jotz13b} that $U$ inherits a natural
Lie algebroid structure with anchor the restriction to $U$ of the
projection onto $TM$.  The Lie algebroid $U$ is compatible with the
Lie algebroid $A$ in a sense that this paper identifies. The triple
$(A,U,\iota)$ is then a Dirac bialgebroid, a new geometrical notion
that we now define.

Let $(A, \rho, [\cdot\,,\cdot])$ and $(U,\rho_U,\lb\cdot\,,\cdot\rb)$
be two Lie algebroids over $M$ with an injective morphism $\iota\colon
U\to TM\oplus A^*$ of vector bundles such that
$\rho_U=\pr_{TM}\circ\iota$.  The triple $(A,U,\iota)$ is called a
\emph{Dirac bialgebroid} (over $A$) if there exists a Courant
algebroid $C$ over $M$ such that
\begin{enumerate}
\item $(C,U)$ is a Manin pair \cite{BuIgSe09}, i.e.~$U$ is a Dirac structure in $C$,
\item there exists a (degenerate) Courant morphism\footnote{If $A$ is a Lie
    algebroid over $M$, then the vector bundle $A\oplus T^*M$ inherits
    the structure of a \emph{degenerate Courant algebroid}, see Example~\ref{deg_cou_algebroid}.}
  $\Phi\colon A\oplus T^*M\to C$ such that
\[ \Phi(A\oplus T^*M)+U=C
\]
and $\langle u,\Phi(\tau)\rangle_C=\langle \iota(u),\tau\rangle$
for all $(u,\tau)\in U\times_M(A\oplus T^*M)$.
\end{enumerate}
Two Dirac bialgebroids $(A,U,\iota)$ and $(A,U',\iota')$ over $A$ are
\emph{equivalent} they define the same Lie algebroid
$\iota(U)=\iota'(U')\subseteq TM\oplus A^*$ with the restriction of
$\pr_{TM}$ as anchor.  Our main theorem is the following:
\begin{theorem}
  Let $(G\rr M, \mathsf D)$ be a source simply connected Dirac
  groupoid. Let $A$ be the Lie algebroid of $G\rr M$ and
  $\iota\colon U\hookrightarrow TM\oplus A^*$ the sets of units of $\mathsf
  D$. The triple $(A,U,\iota)$ is a Dirac bialgebroid and the map
  \[ (G, \mathsf D) \mapsto (A,U,\iota)
\] 
defines a bijection between multiplicative Dirac structures on $G\rr
M$ and equivalence classes of Dirac bialgebroids over $A$.
\end{theorem}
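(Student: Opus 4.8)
The plan is to split the claimed bijection into a known ``integration'' part and a new purely infinitesimal part. By the differentiation and integration theorem for Dirac groupoids recalled in the introduction, for a source simply connected $G$ the assignment $\mathsf D\mapsto \mathsf D_A:=\mathrm{Lie}(\mathsf D\rr U)$ is a bijection between multiplicative Dirac structures on $G\rr M$ and LA-Dirac structures $\mathsf D_A\subseteq TA\oplus T^*A$ on $A$; under it the unit bundle $U$ of $\mathsf D$ is the base of $\mathsf D_A$ viewed as a vector subbundle of $TA\oplus T^*A\to TM\oplus A^*$, i.e.\ $U=\mathsf D_A\cap(TM\oplus A^*)$, and the Lie algebroid structure on $U$ of \cite{Jotz13b} is the one induced on these units by the Lie algebroid $\mathsf D_A\to A$. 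It therefore suffices to prove that $\mathsf D_A\mapsto (A,U,\iota)$ takes values in Dirac bialgebroids, and that the induced map $\mathsf D_A\mapsto \iota(U)\subseteq TM\oplus A^*$ is a bijection onto the Lie subalgebroids of $TM\oplus A^*$ (with anchor the restriction of $\pr_{TM}$) that arise as the unit bundle of some Dirac bialgebroid.

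For well-definedness, take the Courant algebroid $\mathsf B$ over $M$ attached to the Dirac groupoid in \cite{Jotz13b}; by \cite{Jotz13b} the bundle $U$ is a Dirac structure in $\mathsf B$, so $(\mathsf B,U)$ is a Manin pair, giving axiom~(1). For axiom~(2) recall that $A\oplus T^*M$ is the core of the double vector bundle $TA\oplus T^*A$, which over $A$ is the standard Courant algebroid. I would read off from the construction of $\mathsf B$ a canonical bundle map $\Phi\colon A\oplus T^*M\to\mathsf B$ and verify, using the compatibility of the two vector bundle structures on $TA\oplus T^*A$ with its Courant bracket over $A$ and the fact that $\mathsf D_A$ is an LA-Dirac structure, that $\graf(\Phi)\subseteq\mathsf B\times\overline{A\oplus T^*M}$ is isotropic and closed under the Courant bracket, hence a degenerate Courant morphism, and that $\Phi(A\oplus T^*M)+U=\mathsf B$ by a rank count using the construction of \cite{Jotz13b}. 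The pairing identity $\langle u,\Phi(\tau)\rangle_{\mathsf B}=\langle\iota(u),\tau\rangle$ then follows from the canonical duality between the core $A\oplus T^*M$ and the side $TM\oplus A^*$ of $TA\oplus T^*A$, restricted along $U$. This part is essentially bookkeeping with the structures of \cite{Jotz13a,Jotz13b}.

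The substantive half is reconstructing $\mathsf D_A$ from the equivalence class, i.e.\ from the Lie subalgebroid $L:=\iota(U)\subseteq TM\oplus A^*$ alone. Here I would use the classification of LA-Dirac structures in \cite{Jotz13a}: each such structure is determined by, and can be reassembled from, its core subbundle of $A\oplus T^*M$, its restriction $\mathsf D_A|_M$ over $M\subseteq A$, and an adapted Dorfman connection, subject to explicit conditions. One shows that all of this is forced by $L$: a Lagrangian double vector subbundle of $TA\oplus T^*A$ has core equal to the annihilator of its side under the duality between $A\oplus T^*M$ and $TM\oplus A^*$, so the core is $L^{\circ}$; and $\mathsf D_A|_M$ together with the adapted Dorfman connection is recovered from the Lie algebroid structure of $L$, via the Courant bracket of $\mathsf B$ pulled back along $\Phi$ and restricted to $U$, which by the axioms of a Dirac bialgebroid is precisely the bracket of $L$. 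Conversely, from an arbitrary Dirac bialgebroid $(A,L,\iota)$ one extracts this data, checks the conditions of \cite{Jotz13a}, and obtains an LA-Dirac structure $\mathsf D_A$ with $\iota(U)=L$; the two assignments are mutually inverse by construction, and composing with the integration theorem yields the stated bijection for the source simply connected $G$.

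The step I expect to be the main obstacle is the rigidity behind the previous paragraph: that an LA-Dirac structure is completely pinned down by $L=\iota(U)\subseteq TM\oplus A^*$, equivalently that $\mathsf D_A\mapsto U$ is injective. A priori the adapted Dorfman connection of \cite{Jotz13a} carries strictly more information than the bracket of $L$, and one must show that this excess is a gauge artefact of the choice of linear splitting that does not affect $\mathsf D_A$; this is also what makes the witness $(\mathsf B,\Phi)$ in the definition of a Dirac bialgebroid canonical up to the evident isomorphism fixing $U$, and so legitimises quotienting by the equivalence relation. A secondary technical point is the non-wide case: there $U$ need not project onto $TM$ and the degenerate Courant algebroid $A\oplus T^*M$ has a nonzero radical, so the pairing identity constrains $\Phi$ only modulo that radical; one must check that the ambiguity is absorbed by $\Phi(A\oplus T^*M)+U=\mathsf B$ and does not propagate to the reconstructed $\mathsf D_A$.
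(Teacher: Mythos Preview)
Your outline is the paper's strategy: factor through Ortiz's bijection with LA-Dirac structures, build the $A$-Manin pair $(\mathsf B,U)$ directly from the Dirac groupoid (Theorem~\ref{Manin_pair}, with $\Phi(\tau)=0\oplus\tau$ exactly as you anticipate), establish a bijection between LA-Dirac structures and Dirac bialgebroids via the classification in Theorem~\ref{morphic_Dirac_triples}, and verify the triangle commutes by computing $A(\mathsf D)$ explicitly (Proposition~\ref{same_prop}).

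Your framing of the main obstacle is slightly misdirected, though. The ``gauge artefact'' observation---that two adapted Dorfman connections for a fixed $\mathsf D_A$ differ only by $K$-valued terms on $U$ and hence induce the same $\ldr{}^U$---gives injectivity of $\mathsf D_A\mapsto L$ cheaply; this is recorded in \S\ref{sum_Jotz13a}. The real work is surjectivity: given a Dirac bialgebroid $(A,L,\iota)$ and an arbitrary extension $\Delta$ of $\ldr{}^L$, conditions (1)--(3) of Theorem~\ref{morphic_Dirac_triples} are immediate, but (4) and (5)---that $\nabla^{\rm bas}$ preserves $\Gamma(U)$ and $R_\Delta^{\rm bas}$ lands in $K$---are not visibly consequences of the Lie bracket on $L$ alone, and they are exactly what one needs for the triple $(U,U^\circ,\Delta)$ to define an LA-Dirac structure. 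The paper's resolution (Theorem~\ref{Courant_algebroid} and especially Lemma~\ref{difficult_computation} in the appendix) is that (4) and (5) are \emph{equivalent} to the Courant algebroid axioms for the quotient bundle $C$ of \eqref{C_ident} with bracket \eqref{bracket_on_C}. So the existence of the witness Courant algebroid in the definition of Dirac bialgebroid is not redundant packaging to be quotiented away: it is precisely the certificate for (4)--(5). Your sketch says ``checks the conditions of \cite{Jotz13a}'' at this point without identifying this mechanism, which is where the bulk of the computation lies.

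Your secondary worry about the radical of the degenerate pairing on $A\oplus T^*M$ does not materialise: Section~\ref{def_Manin_IM} shows that the pairing axiom forces $\Phi|_{U^\circ}=(\rho,\rho^t)|_{U^\circ}$, and this together with surjectivity pins down $C$ as \eqref{C_ident} and its bracket as \eqref{bracket_on_C} without ambiguity.
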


\medskip 

Recall for instance that a Lie bialgebroid $(A,A^*)$ is a pair of Lie
algebroids in duality that are two transverse Dirac structures in a
Courant algebroid; which we can hence write $A\oplus A^*$. Set
$\iota:=(\rho_\star,\id_{A^*})\colon A^*\to TM\oplus A^*$, where
$\rho_\star$ is the anchor of the Lie algebroid
$A^*$. Section~\ref{ex_Poisson} shows that $(A,A^*,\iota)$ is a Dirac
bialgebroid, with ambient Courant algebroid $C=A\oplus A^*$.

Take now an IM-2-form $\sigma\colon A\to T^*M$, i.e.~with $\langle
\rho(a),\sigma(b)\rangle=-\langle\rho(b),\sigma(a)\rangle$ and
$\sigma[a,b]=\ldr{\rho(a)}\sigma(b)-\ip{\rho(b)}\dr\sigma(a)$ for all
$a,b\in\Gamma(A)$. Define $\iota=(\id_{TM},\sigma^t)\colon TM\to
TM\oplus A^*$.  Then $(A, TM,\iota)$ is a Dirac bialgebroid with
ambient Courant algebroid $C$ the standard Courant-Dorfman structure
on $TM\oplus T^*M$ (see Section~\ref{ex_presymplectic}).

Finally consider an infinitesimal ideal system $(A,F_M,J,\nabla)$,
i.e.~$F_M\subseteq TM$ is an involutive subbundle, $J\subseteq A$ is a
subbundle such that $\rho(J)\subseteq F_M$ and $\nabla$ is a flat
$F_M$-connection on $A/J$ with the following properties:
\begin{enumerate}
\item If $\bar a\in\Gamma(A/J)$ is $\nabla$-parallel, then
  $[a,j]\in\Gamma(J)$ for all $j\in\Gamma(J)$.
\item If $\bar a, \bar b\in\Gamma(A/J)$ are $\nabla$-parallel, then
  $\overline{[a,b]}$ is also $\nabla$-parallel.
\item If $\bar a\in\Gamma(A/J)$ is $\nabla$-parallel, then
  $\overline{\rho(a)}\in\Gamma(TM/F_M)$ is $\nabla^{F_M}$-parallel,
  where
  \[\nabla^{F_M}\colon \Gamma(F_M)\times\Gamma(TM/F_M)\to\Gamma(TM/F_M)\]
  is the Bott connection associated to $F_M$.
\end{enumerate}
Section~\ref{ex_iis} shows that $(A,F_M,J,\nabla)$ is an
infinitesimal ideal system if and only if $(A,F_M\oplus
J^\circ,\iota)$ is a Dirac bialgebroid with $\iota\colon F_M\oplus
J^\circ\to TM\oplus A^*$ the inclusion, and the Lie algebroid bracket
$[\cdot\,,\cdot]_\nabla$ on $F_M\oplus J^\circ$ defined by
$[(X_1,\alpha_1),(X_2,\alpha_2)]_{\nabla}=([X_1,X_2],\nabla^*_{X_1}\alpha_2-\nabla_{X_2}^*\alpha_1)$.
In order to prove this, we construct a natural Lie algebroid structure
on $\bar A:=(F_M\oplus A)/\graf(-\rho\an{J}:J\to F_M)$, which exists
if and only if the quadruple $(A,F_M,J,\nabla)$ is an infinitesimal
ideal system. We find this result interesting in its own right because
it is a new manner of forming the quotient of an algebroid by an
infinitesimal ideal system, as a generalisation of the quotient of a
Lie algebra by an ideal. The ambient Courant algebroid $C$ is in this example
the Courant algebroid $\bar A\oplus \bar A^*$ defined by the Lie
bialgebroid $(\bar A, \bar A^*)$ with trivial Lie algebroid structure
on $\bar A^*$.

Note that the two families $(A,A^*,(\rho_\star,\id_{A^*}))$ and
$(A,TM,(\id_{TM},\sigma^t))$ of Dirac bialgebroids can be seen as
extreme cases of Dirac bialgebroids, like Poisson and presymplectic
groupoids are the extreme cases of Dirac groupoids. In the first
family, the interesting information is in the pair $(A,A^*)$ and in
the second one, it is rather in the embedding
$(\id_{TM},\sigma^t)\colon TM\to TM\oplus A^*$.

\medskip

Let us finally point out that a Dirac groupoid is an
\emph{LA-groupoid}; the space $\mathsf D$ has a Lie groupoid structure
over $U$ and a Lie algebroid structure over $G$, and the structures
are compatible \cite{Mackenzie92}.  If the Lie algebroid structure is
integrable, the Dirac groupoid should integrate to a presymplectic
double Lie groupoid (``presymplectic'' in the sense of
\cite{BuCrWeZh04}). Hence, this paper gives a class of examples of
Courant algebroids (including the ones defined by Lie bialgebroids)
that could be seen as integrating to ``presymplectic $2$-groupoids'',
if following Mehta and Tang's proposal to integrate Courant algebroids
defined by Lie bialgebroids to ``symplectic $2$-groupoids''
\cite{MeTa11}.  

\subsection*{Methodology and outline of the paper}
First Section~\ref{preliminaries} recalls some background on Lie
groupoids and their tangent and cotangent prolongations, on Courant
algebroids and Dirac structures, on Dorfman connections and dull
algebroids, and on splittings of VB-algebroids and $2$-term
representations up to homotopy.  

Section~\ref{def_Manin_IM} introduces the notions of $A$-Manin
pairs and Dirac bialgebroids, before showing how they are
equivalent. Then Section~\ref{manin} proves that the Manin pair
associated in \cite{Jotz13b} to a Dirac groupoid $(G,
\mathsf D)$ is an $A$-Manin pair, and so that the triple $(A,U,\iota)$
is a Dirac bialgebroid.

Section~\ref{dir_alg} studies LA-Dirac structures on Lie
algebroids. First we summarise the results in \cite{Jotz13a} on linear
splittings of LA-Dirac structures and we describe the two $2$-term
representations up to homotopy describing in a linear splitting the
two VB-algebroid sides.  Then we describe how this data allows us to
construct an $A$-Manin pair associated to each LA-Dirac structure, and
how this defines a bijection between $A$-Manin pairs and LA-Dirac
structures over $A$.

Section~\ref{lie_algebroid} recalls a result of \cite{Ortiz13}
about the one-to-one correspondence between Dirac groupoids and
integrable Dirac algebroids, i.e.~integrable Lie algebroids
with compatible Dirac structures.  Then we describe explicitly the
Lie algebroid $A(\mathsf D)\to U$ of the multiplicative Dirac
structure in terms of the Dorfman connection
$\ldr{}^{U}\colon\Gamma(U)\times\Gamma(U^*)\to\Gamma(U^*)$ that is dual to
the Lie algebroid bracket on sections of $U$.

Section~\ref{conclusion}
finally shows that the Dirac bialgebroid associated to the Dirac
algebroid $(A,\mathsf D_A)$ that is canonically isomorphic to
$(A,A(\mathsf D))$ is the same as the Dirac bialgebroid constructed in
Section~\ref{manin}.  Section~\ref{recipe} gives as a summary a sketch
of the strategy to integrate an integrable Dirac bialgebroid to a
Dirac groupoid. 

 Section~\ref{examples} shows how the notion
of Dirac bialgebroid encompasses the notions of Lie bialgebroids,
IM-2-forms and infinitesimal ideal systems.  \medskip

Let us repeat the different steps that lead us to our main theorem:
\begin{enumerate}
\item[Section~\ref{def_Manin_IM}] Equivalence classes of $A$-Manin
  pairs are in one-to-one correspondence with equivalence classes of
  Dirac bialgebroids.
\item[Section~\ref{manin}] A Dirac groupoid defines an equivalence
  class of $A$-Manin pairs and so of Dirac bialgebroids.
\item[Section~\ref{dir_alg}] There is a bijection
  between LA-Dirac structures and equivalence classes of Dirac
  bialgebroids.
\item[Section~\ref{cristian}] The Lie algebroid of a Dirac groupoid is
  a Dirac algebroid. This defines a bijection between
  source simply connected Dirac groupoids and integrable Dirac
  algebroids \cite{Ortiz13}.
\item[Section~\ref{explicit_computation}] Explicit computation of the
  LA-Dirac structure defined as in
  Section~\ref{cristian} by a multiplicative Dirac structure.
\item[Section~\ref{conclusion}] The Dirac bialgebroids defined by a
  Dirac groupoid and by the corresponding LA-Dirac structure coincide.
\end{enumerate}

\begin{tikzpicture}[>=to]
\tikzstyle{block} = [rectangle, draw=blue, thick, text width=12em, text centered, rounded corners, minimum height=3em]
\matrix [column sep=-5mm,row sep=11mm]
{\node [block] (Mult) {Multiplicative Dirac structures on $G\rr M$};&&\node [block] (Mor) {LA-Dirac structures on $A(G)$};\\
&\node [block] (Bi) {Dirac bialgebroids over $A(G)$ / $A(G)$-Manin pairs}; &\\
};
\path [draw, double, <->, shorten >=5mm, shorten <=5mm] (Mult) -- node[above=2mm, midway]{Section~\ref{cristian}} (Mor); 
\path [draw,double,->, shorten >=5mm, shorten <=5mm](Mult) -- node [below=2mm, near start]{Section~\ref{manin}\qquad \qquad }(Bi); 
\path [draw, double, <->, shorten >=5mm, shorten <=5mm](Bi) -- node [below=2mm, near end]{\qquad Section~\ref{dir_alg}} (Mor);
\end{tikzpicture}

\subsection*{Notation and conventions}
In the following, $G\rr M$ is always a (Hausdorff) Lie groupoid over
the base $M$. The space of units $M$ is always considered to be a
subset of the space $G$ of arrows. The structure maps of a Lie
groupoid is written as $(\epsilon, \s, \tg, \m, \mathsf i)$, unless
specified otherwise.  The Lie algebroid of a Lie groupoid is
constructed using right-invariant vector fields on the groupoid, and
the anchor is the restriction of $T\tg$ to $T^\s G\an{M}$.  To
simplify the notation, the Lie algebroid $(A(G),
[\cdot\,,\cdot]_{\mathsf A(G)}, \rho_{A(G)})$ of a Lie groupoid $G$ is
always written as $(A, [\cdot\,,\cdot], \rho)$.

Let $M$ be a smooth manifold. We denote by $\mx(M)$ and
$\Omega^1(M)$ the spaces of smooth sections of the tangent and
the cotangent bundle, respectively. For an arbitrary vector bundle
$E\to M$, the space of sections of $E$ is written as
$\Gamma(E)$.
We write in general $q_E\colon E\to M$ for vector bundle projections, except
for $p_M=q_{TM}\colon TM\to M$, $c_M=q_{T^*M}\colon T^*M \to M$ and
$\pi_M=q_{TM\oplus T^*M}\colon TM\oplus T^*M\to M$.

Given a section $\varepsilon$ of $E^*$, we always write
$\ell_\varepsilon\colon E\to \R$ for the linear function associated to
it, i.e.~the function defined by $e_m\mapsto \langle \varepsilon(m),
e_m\rangle$ for all $e_m\in E$.  We write $\phi^t\colon B^*\to
A^*$ for the dual morphism to a morphism $\phi\colon A\to B$ of vector
bundles over the identity, and we write $F^*\omega$ for the
pullback of a form $\omega\in\Omega(N)$ under a smooth map $F\colon
M\to N$ of manifolds.

Let $A$ be a Lie algebroid.  For each $a\in\Gamma(A)$, we have two
derivations over $\rho(a)$:
\begin{equation}\label{ldr_a}
\begin{split}
\ldr{a}\colon \Gamma(A\oplus T^*M)\to\Gamma(A\oplus T^*M),&\quad \ldr{a}(a',\theta)=([a,a'], \ldr{\rho(a)}\theta)\\
\ldr{a}\colon \Gamma(TM\oplus A^*)\to\Gamma(TM\oplus A^*),& \quad\ldr{a}(X,\alpha)=([\rho(a),X], \ldr{a}\alpha).
\end{split}
\end{equation}
Note also that the anchor $\rho$ of a Lie algebroid $A$ defines a
vector bundle morphism $(\rho,\rho^t)\colon A\oplus T^*M\to TM\oplus
A^*$.

\subsection*{Acknowledgements}
The author would like to thank C.~Ortiz and T.~Drummond for
stimulating discussions at the beginning of this project, and K.~Mackenzie for the equations needed
in Section~\ref{ex_Poisson}.

\section{Preliminaries}\label{preliminaries}
This section collects necessary background on the Lie groupoid
structure on the Pontryagin bundle $TG\oplus T^*G$ of a Lie groupoid
$G\rr M$, on Courant algebroids, Dirac structures and Dorfman
connections, and on double vector bundles, VB-algebroids and
representations up to homotopy.
\subsection{Tangent and cotangent Lie groupoids}
Let $G\rr M$ be a Lie groupoid. Applying the tangent functor to each
of the maps defining $G$ yields a Lie groupoid structure on $TG$ with
base $TM$, source $T\s$, target $T\tg$ and multiplication $T\m\colon
T(G\times _MG)\to TG$.  The identity at $v_p\in T_pM$ is
$1_{v_p}=T_p\epsilon v_p$.  This defines the \emph{tangent
  prolongation of $G\rr M$}.

There is also an induced Lie groupoid structure on $T^*G\rr\, A^*=
(TM)^\circ$ \cite{CoDaWe87} (see also \cite{Pradines88}). The source
map $\hat\s\colon T^*G\to A^*$ is given by
\[\hat\s(\alpha_g)\in A_{\s(g)}^*G \text{ for } \alpha_g\in T_g^*G,\qquad 
\hat\s(\alpha_g)(a_{\s(g)})=\alpha_g(T_{\s(g)}L_g(a_{\s(g)}-T_{\s(g)}\tg
a_{\s(g)}))\] for all $a_{\s(g)}\in A_{\s(g)}G$, and the target map
$\hat\tg\colon T^*G\to A^*$ is given by \[\hat\tg(\alpha_g)\in
A_{\tg(g)}^*G, \qquad \hat\tg(\alpha_g)(a_{\tg(g)})
=\alpha_g(T_{\tg(g)}R_g(a_{\tg(g)}))\] for all $a_{\tg(g)}\in
A_{\tg(g)}G$.  If $\hat\s(\alpha_g)=\hat\tg(\alpha_h)$, then the
product $\alpha_g\star\alpha_h$ is defined by
$(\alpha_g\star\alpha_h)(T\m(v_g,v_h))=\alpha_g(v_g)+\alpha_h(v_h)$
for composable pairs $(v_g,v_h)\in T_{(g,h)}(G\times_M G)$.

The fibred product $TG\times_G T^*G$ inherits an induced Lie groupoid
structure over $TM\times_M A^*$.  We write $\mathbb T\tg$ for the
target map $TG\times_G T^*G\to TM\times_M A^*$, and
$\mathbb T\s\colon TG\times_G T^*G\to TM\times_M A^*$ for the source
map. Here again, we write $p_g\star p_h$ for the product of compatible
$p_g,p_h\in TG\oplus T^*G$.

Note that the restriction to $M$ of the kernel of $\TT\s$ can be
identified as follows with $A\oplus T^*M$. The elements of $T_gG\times
T_g^*G$ that are sent by $\TT\s$ to $0_{\s(g)}\in T_{\s(g)}M\times
A^*_{\s(g)}$ are of the form $\left((R_g)_*a_{\tg(g)},
  (T_g\tg)^t\theta_{\tg(g)}\right)$ with $a_{\tg(g)}\in
A_{\tg(g)}=T^\s_{\tg(g)}M$ and $\theta_{\tg(g)}\in T^*_{\tg(g)}M$.  We
identify $\left(a_{m}, (T_m\tg)^t\theta_{m}\right)\in\ker(\TT\s)_m$
with $(a_m, \theta_m)$ in $A_m\times T^*_mM$. Given a section
$\tau=(a,\theta)$ of $A\oplus T^*M$, the \emph{right invariant
  section} $\tau^r\in\Gamma(\ker\TT\s)$ defined by $\tau$ is given
by $\tau^r(g)=\tau_{\tg(g)}\times(0_g,0_g)=(a^r(g),
T_g\tg^t\theta(\tg(g)))\in \ker(\TT\s)_g$ for all $g\in G$.
Furthermore, the restriction of $\TT\tg$ to $\ker(\TT\s)\an{M}$ is,
via the identification above, the map $(\rho,\rho^t)\colon A\oplus
T^*M\to TM\oplus A^*$.

\medskip

\subsection{Courant algebroids and Dirac structures}
A \textbf{Courant algebroid} \cite{LiWeXu97, Roytenberg99} over a manifold $M$ is a vector
bundle $C\to M$ equipped with a fibrewise nondegenerate
symmetric bilinear form $\langle\cdot\,,\cdot\rangle$, a bilinear
bracket $[\cdot\,,\cdot]$ on the smooth sections $\Gamma(C)$,
and a vector bundle map $\rho\colon C\to TM$ over the identity
called the anchor, which satisfy the following conditions\footnote{The following conditions 
\begin{enumerate}
\setcounter{enumi}{3}
\item   $\rho(\lb c_1, c_2\rb) = [\rho(c_1), \rho(c_2)]$,
\item    $\lb c_1,  f c_2\rb =  f \lb c_1 , c_2\rb + (\rho(c_1 ) f )c_2$
\end{enumerate}
are then also satisfied. They are often part of the definition in the
literature, but \cite{Uchino02} observed that they
follow from (1)-(3).  We quickly give here a simple manner to get
(4)-(5) from (1)-(3).  To get (5), replace $c_2$ by $f c_2$ in
(2). Then replace $c_2$ by $f c_2$ in (1) in order to get (4).  }
\begin{enumerate}
\item $\lb c_1, \lb c_2, c_3\rb\rb = \lb\lb c_1, c_2\rb, c_3\rb + \lb c_2, \lb c_1, c_3\rb\rb$,
\item    $\rho(c_1 )\langle c_2, c_3\rangle = \langle\lb c_1, c_2\rb, c_3\rangle 
+ \langle c_2, \lb c_1 , c_3\rb\rangle$,
\item $\lb c_1, c_2\rb +\lb c_2, c_1\rb =\mathcal D\langle c_1 , c_2\rangle$
\end{enumerate}
for all $c_1, c_2, c_3\in\Gamma(C)$ and $ f\in
C^\infty(M)$.
Here, we  use the notation $\mathcal D := \rho^t\circ\dr \colon 
C^\infty(M)\to\Gamma(C)$, using $\langle\cdot\,,\cdot\rangle$
to identify $C$ with $C^*$:
$\langle \mathcal D f, c\rangle=\rho(c)( f)$
for all $ f\in
C^\infty(M)$ and $c\in\Gamma(C)$.

If $C_1$ and $C_2$ are Courant algebroids over the same base $M$, then
a \textbf{Courant morphism} is a vector bundle morphism $\Phi\colon
C_1 \to C_2$ over the identity on $M$, such that
$\rho_2\circ\Phi=\rho_1$, $\langle c,c'\rangle_{C_1}=\langle \Phi(c),
\Phi(c')\rangle_{C_2}$ and $\Phi\lb c, c'\rb_{C_1}=\lb \Phi(c),
\Phi(c')\rb_{C_2}$ for all $c,c'\in\Gamma(C_1)$.

\begin{example}\label{ex_pontryagin}
The direct sum $TM\oplus T^*M$ 
with the projection on $TM$ as anchor map, $\rho=\pr_{TM}$, 
the symmetric bracket 
$\langle\cdot\,,\cdot\rangle$
given by 
\begin{equation}
\langle(v_m,\theta_m), (w_m,\omega_m)\rangle=\theta_m(w_m)+\omega_m(v_m)
\label{sym_bracket}
\end{equation}
for all $m\in M$, $v_m,w_m\in T_mM$ and $\theta_m,\omega_m\in T_m^*M$
and the \textbf{Courant-Dorfman bracket} 
given by 
\begin{equation*}
\lb (X_1,\theta_1), (X_2,\theta_2)\rb=\left([X_1,X_2], \ldr{X_1}\theta_2-\ip{X_2}\dr\theta_1\right)
\end{equation*}
for all $(X_1,\theta_1), (X_2, \theta_2)\in\Gamma(TM\oplus T^*M)$,
yield the standard example of  a Courant algebroid (often called 
the \emph{standard Courant algebroid over $M$}).
The map $\mathcal D\colon  C^\infty(M)\to \Gamma(TM\oplus T^*M)$
is given by $\mathcal D f=(0, \dr f)$.
\end{example}

\begin{example}\label{deg_cou_algebroid}
  Let $G\rr M$ be a Lie groupoid.  The Courant-Dorfman bracket and the
  pairing on sections of $TG\oplus T^*G\to G$ restrict to
  right-invariant sections (defined in the previous section) and give
  rise to the following structure on $A\oplus T^*M$.

The direct sum $A \oplus T^*M\to M$, with the
anchor $\rho\circ\pr_A\colon A\oplus T^*M\to TM$, the pairing $\langle
(a_1,\theta_1), (a_2,\theta_2)\rangle_d=\theta_2(\rho(a_1))+\theta_1(\rho(a_2))$ and the bracket\footnote{``d'' stands for
  degenerate.}
$[\cdot\,,\cdot]_d$ on $\Gamma(A\oplus T^*M)$,
\[ [(a_1,\theta_1), (a_2,\theta_2)]_d
=([a_1,a_2], \ldr{\rho(a_1)}\theta_2-\ip{\rho(a_2)}\dr\theta_1)
\]
is a \emph{degenerate Courant algebroid}. 
That is, $(A\oplus T^*M,
\rho\circ\pr_A, \langle\cdot\,,\cdot\rangle_d,
[\cdot\,,\cdot]_d)$ satisfies the axioms (1)--(5) of
a Courant algebroid, except the non-degeneracy of the pairing (see
\cite{Jotz15}). 
\end{example}

A \textbf{Dirac structure}\cite{CoWe88} $\mathsf D\subseteq C$
is a subbundle satisfying
\begin{enumerate}
\item $\mathsf D^\perp=\mathsf D$ relative to the pairing on $C$,
\item $[\Gamma(\mathsf D), \Gamma(\mathsf D)]\subseteq \Gamma(\mathsf
  D)$.
\end{enumerate}
The rank of the vector bundle $\mathsf D$ is then half the rank of $C$,
and the triple $(\mathsf D\to M, \rho\an{\mathsf D},
[\cdot\,,\cdot]\an{\Gamma(\mathsf D)\times\Gamma(\mathsf D)})$ is a
Lie algebroid over $M$.  Dirac structures appear naturally in several
contexts in geometry and geometric mechanics (see for instance
\cite{Bursztyn11} for an introduction to the geometry and applications
of Dirac structures).  Three classes of great interest in this
paper are collected in the following example.
\begin{example}
  Let $M$ be a smooth manifold and $F\subseteq TM$ an involutive
  subbundle. Then $\mathsf D_F=F\oplus F^\circ\subseteq TM\oplus T^*M$
  is a Dirac structure on $M$.

  Now let $\pi\in\Gamma\left(\bigwedge^2 TM\right)$ be a Poisson
  bivector field.  Then $\mathsf D_\pi\subseteq TM\oplus T^*M$
  defined by
  \[\mathsf{D}_\pi=\operatorname{graph}(\pi^\sharp\colon T^*M\to TM,
  \pi^\sharp(\theta_m)=\pi(\theta_m,\cdot))
\]
is a Dirac structure on $M$. 

Finally let $\omega\in\Omega^2(M)$ be a closed $2$-form on $M$.
Then $\mathsf D_\omega\subseteq TM\oplus T^*M$ defined by
  \[\mathsf{D}_\omega=\operatorname{graph}(\omega^\flat\colon TM\to T^*M,
  \omega^\flat(v_m)=\omega(v_m,\cdot))
\]
is a Dirac structure on $M$. 
\end{example}

\subsection{Dorfman connections and dull algebroids}
In the following, an \emph{anchored} vector bundle is a vector bundle
$Q\to M$ with a vector bundle morphism $\rho_Q\colon Q\to TM$ over the
identity.  An anchored vector bundle $(Q\to M, \rho_Q)$ and a vector
bundle $B\to M$ are said to be \emph{paired} if there exists a
fibrewise pairing $\langle\cdot\,,\cdot\rangle\colon Q\times_M B\to
\R$ and a map $\dr_B\colon C^\infty(M)\to \Gamma(B)$ such that
\begin{equation}\label{compatibility_anchor_pairing}
  \langle q, \dr_B f\rangle=\rho_Q(q)( f)
\end{equation}
for all $q\in\Gamma(Q)$ and $ f\in C^\infty(M)$.  Then $(Q\to M,
\rho)$ and $(B,\dr_B,\langle\cdot\,,\cdot\rangle)$ are said to be
\textbf{paired by $\langle\cdot\,,\cdot\rangle$}.

\begin{definition}\label{the_def}
  Let $(Q\to M,\rho_Q)$ be an anchored vector bundle that is paired
  with $(B\to M, \dr_B, \langle\cdot\,,\cdot\rangle)$.  A
  \textbf{Dorfman ($Q$-)connection on $B$} is an $\R$-linear map
\begin{equation*}
  \Delta\colon \Gamma(Q)\to \operatorname{Der}(B)
\end{equation*} 
such that\footnote{Note that the map $\langle q, \cdot\rangle\cdot
  \dr_B f$ can be seen as a section of $\operatorname{Hom}(B,B)$,
  i.e.~as a derivation over $0\in\mx(M)$.}
\begin{enumerate}
\item $\Delta_q$ is a derivation over $\rho_Q(q)\in\mx(M)$,
\item $\Delta_{ f q}b= f\Delta_qb+\langle q, b\rangle \cdot \dr_B  f$ and 
\item $\Delta_{q}(\dr_B f)=\dr_B(\rho_Q(q) f)$
\end{enumerate}
for all $ f\in C^\infty(M)$, $q,q'\in\Gamma(Q)$, $b\in\Gamma(B)$.
\end{definition}

\begin{remark}
  If the pairing $\langle\cdot\,,\cdot\rangle\colon Q\times_MB\to
  \R$ is nondegenerate, then $B\simeq Q^*$ and the map
  $\dr_{B}=\dr_{Q^*}\colon C^\infty(M)\to\Gamma(Q^*)$ is
  \emph{defined} by \eqref{compatibility_anchor_pairing}: we have then
  $\dr_{Q^*} f=\rho_Q^t\dr f$ for all $ f\in
  C^\infty(M)$.

  The map $\Delta^*\colon \Gamma(Q)\times\Gamma(Q)\to \Gamma(Q)$ that is
  dual to $\Delta$ in the sense of dual derivations, i.e. $\langle
  \Delta^*_{q_1}q_2, \tau\rangle=\rho_Q(q_1)\langle
  q_2,\tau\rangle -\langle q_2, \Delta_{q_1}\tau\rangle$ for all
  $q_1,q_2\in \Gamma(Q)$ and $\tau\in\Gamma(Q^*)$ is then a dull
  bracket on $\Gamma(Q)$ as in the following definition.
\end{remark}

\begin{definition}
A \textbf{dull algebroid} 
is an anchored
 vector bundle  $(Q\to M, \rho_Q)$ 
with a bracket
$[\cdot\,,\cdot]_Q$ on $\Gamma(Q)$ such that
\begin{equation}\label{anchor_preserves_bracket}
\rho_Q[q_1, q_2]_Q=[\rho_Q(q_1),\rho_Q(q_2)]
\end{equation}
and  (the Leibniz identity)
\begin{equation*} [ f_1 q_1,  f_2
  q_2]_Q= f_1 f_2[q_1,
  q_2]_Q+ f_1\rho_Q(q_1)( f_2)q_2- f_2\rho_Q(q_2)( f_1)q_1
\end{equation*}
for all $ f_1, f_2\in C^\infty(M)$, $q_1, q_2\in\Gamma(Q)$.
\end{definition}
That is, a dull algebroid is a \textbf{Lie algebroid} if its bracket
is in addition skew-symmetric and satisfies the Jacobi identity.  The
following examples are crucial in this paper.
\begin{example}
\begin{enumerate}
\item Let $(A\to M, \rho,[\cdot\,,\cdot])$ be a Lie algebroid.
The Dorfman connection that is dual to the bracket on sections of $A$ is the Lie derivative
\[\ldr{}^A:\Gamma(A)\times\Gamma(A^*)\to\Gamma(A^*).
\]

\item Let $C$ be a Courant algebroid and $D$ a Dirac structure in $C$.
  The \textbf{Bott-Dorfman connection}
\[ \Delta^D\colon \Gamma(D)\times\Gamma(C/D)\to\Gamma(C/D),
\]  is defined by
\[\Delta^D_d\bar c=\overline{\lb d, c\rb}, \quad \text{ for } d\in
\Gamma(D), c\in\Gamma(C).
\] Modulo the identification $D^*\simeq C/D$, this Dorfman connection
is the Lie derivative $\Delta^D=\ldr{}^D\colon
\Gamma(D)\times\Gamma(D^*)\to\Gamma(D^*)$ that is dual to the Lie
algebroid structure on $D$.
\end{enumerate}
\end{example}

Let $A\to M$ be a vector bundle. The vector bundle $TM\oplus A^*$ is
always anchored by the projection $\pr_{TM}$ to $TM$, and paired
with $A\oplus T^*M$ via the obvious pairing and the map $\dr_{A\oplus
  T^*M}\colon C^\infty(M)\to\Gamma(A\oplus T^*M)$, $f\mapsto (0,\dr f)$. We
are particularly interested in Dorfman connections
\[\Delta\colon\Gamma(TM\oplus A^*)\times\Gamma(A\oplus T^*M)\to
\Gamma(A\oplus T^*M).
\]
Assume that a subbundle $U\subseteq TM\oplus A^*$ has a
Lie algebroid structure with the anchor $\pr_{TM}$. The Lie algebroid
bracket on $\Gamma(U)$ can be extended to a dull bracket on $TM\oplus
A^*$.  The Dorfman connection $\Delta\colon\Gamma(TM\oplus
A^*)\times\Gamma(A\oplus T^*M)\to \Gamma(A\oplus T^*M)$ that is dual
to this bracket satisfies $\Delta_u\tau\in \Gamma(U^\circ)$ for all
$u\in\Gamma(U)$ and $\tau\in\Gamma(U^\circ)$ and quotients hence to a Dorfman connection
\[\bar\Delta\colon \Gamma(U)\times\Gamma(A\oplus
T^*M/U^\circ)\to\Gamma(A\oplus T^*M/U^\circ).
\]
Note that $U^*\simeq A\oplus T^*M/U^\circ$ and the Dorfman connection
$\bar\Delta$ is equal to
$\ldr{}^U\colon\Gamma(U)\times\Gamma(U^*)\to\Gamma(U^*)$, the Lie
derivative associated to the Lie algebroid $U$.
We say that the Dorfman connection $\Delta$ is an \textbf{extension} of $\ldr{}^U$.

\subsection{VB-algebroids and representations up to homotopy}\label{background_DVB_etc}
We briefly recall the definitions of double vector bundles, of their
\emph{linear} and \emph{core} sections, and of their \emph{linear
  splittings} and \emph{lifts}. We refer to
\cite{Pradines77,Mackenzie05,GrMe10a} for more detailed treatments.
A \textbf{double vector bundle} is a commutative square
\begin{equation*}
\begin{xy}
\xymatrix{
D \ar[r]^{\pi_B}\ar[d]_{\pi_A}& B\ar[d]^{q_B}\\
A\ar[r]_{q_A} & M}
\end{xy}
\end{equation*}
of vector bundles such that $(d_1+_Ad_2)+_B(d_3+_Ad_4)=(d_1+_Bd_3)+_A(d_2+_Bd_4)$
for $d_1,d_2,d_3,d_4\in D$ with $\pi_A(d_1)=\pi_A(d_2)$,
$\pi_A(d_3)=\pi_A(d_4)$ and $\pi_B(d_1)=\pi_B(d_3)$,
$\pi_B(d_2)=\pi_B(d_4)$.
Here, $+_A$ and $+_B$ are the additions in $D\to A$ and $D\to B$,
respectively.
The vector bundles $A$
and $B$ are called the \textbf{side bundles}. The \textbf{core} $C$ of
a double vector bundle is the intersection of the kernels of $\pi_A$
and of $\pi_B$, which has a 
natural vector bundle structure on $C$ over
$M$. The
inclusion $C \hookrightarrow D$ is denoted by
$
C_m \ni c \longmapsto \overline{c} \in \pi_A^{-1}(0^A_m) \cap \pi_B^{-1}(0^B_m).
$

The space of sections $\Gamma_B(D)$ is generated as a
$C^{\infty}(B)$-module by two distinguished classes of sections (see
\cite{Mackenzie11}), the \emph{linear} and the \emph{core sections}
which we now describe.  For a section $c\colon M \rightarrow C$, the
corresponding \textbf{core section} $c^\dagger\colon B \rightarrow D$
is defined as $c^\dagger(b_m) = \tilde{0}_{\vphantom{1}_{b_m}} +_A
\overline{c(m)}$, $m \in M$, $b_m \in B_m$.  If not specified
otherwise we denote the corresponding core section $A\to D$ by
$c^\dagger$ also, relying on the argument to distinguish between
them. The space of core sections of $D$ over $B$ is written
$\Gamma_B^c(D)$.

A section $\xi\in \Gamma_B(D)$ is called \textbf{linear} if $\xi\colon
B \rightarrow D$ is a bundle morphism from $B \rightarrow M$ to $D
\rightarrow A$ over a section $a\in\Gamma(A)$.  The space of linear
sections of $D$ over $B$ is denoted by $\Gamma^\ell_B(D)$.  Given
$\psi\in \Gamma(B^*\otimes C)$, there is a linear section
$\tilde{\psi}\colon B\to D$ over the zero section $0^A\colon M\to A$
given by
$\widetilde{\psi}(b_m) = \tilde{0}_{b_m}+_A \overline{\psi(b_m)}$.
We call $\widetilde{\psi}$ a \textbf{core-linear section}. 

\begin{example}\label{trivial_dvb}
  Let $A, \, B, \, C$ be vector bundles over $M$ and consider
  $D=A\times_M B \times_M C$. With the vector bundle structures
  $D=q^{!}_A(B\oplus C) \to A$ and $D=q_B^{!}(A\oplus C) \to B$, one
  finds that $(D; A, B; M)$ is a double vector bundle called the
  \textbf{decomposed double vector bundle with
    core $C$}. The core sections are given by
$$
c^\dagger\colon b_m \mapsto (0^A_m, b_m, c(m)), \text{ where } m \in M, \, b_m \in
B_m, \, c \in \Gamma(C),
$$
and similarly for $c^\dagger\colon A\to D$.  The space of linear
sections $\Gamma^\ell_B(D)$ is naturally identified with
$\Gamma(A)\oplus \Gamma(B^*\otimes C)$ via
$$
(a, \psi): b_m \mapsto (a(m), b_m, \psi(b_m)), \text{ where } \psi \in
\Gamma(B^*\otimes C), \, a\in \Gamma(A).
$$

In particular, the fibred product $A\times_M B$ is a double vector
bundle over the sides $A$ and $B$, with core $M\times 0$.
\end{example}
A \textbf{linear splitting} of $(D; A, B; M)$ is an injective morphism
of double vector bundles $\Sigma\colon A\times_M B\hookrightarrow D$
over the identity on the sides $A$ and $B$.  Any double vector bundle
admits a linear splitting \cite{GrRo09,Pradines77}.
A linear splitting $\Sigma$ of $D$ is also equivalent to a splitting
$\sigma_A$ of the short exact sequence of $C^\infty(M)$-modules
\begin{equation}\label{fat_seq_gamma}
0 \longrightarrow \Gamma(B^*\otimes C) \hookrightarrow \Gamma^\ell_B(D) 
\longrightarrow \Gamma(A) \longrightarrow 0,
\end{equation}
where the third map is the map that sends a linear section $(\xi,a)$
to its base section $a\in\Gamma(A)$.  The splitting $\sigma_A$ is
called a \textbf{horizontal lift}. Given $\Sigma$, the horizontal lift
$\sigma_A\colon \Gamma(A)\to \Gamma_B^\ell(D)$ is given by
$\sigma_A(a)(b_m)=\Sigma(a(m), b_m)$ for all $a\in\Gamma(A)$ and
$b_m\in B$.  By the symmetry of a linear splitting, we find that a
lift $\sigma_A\colon \Gamma(A)\to\Gamma_B^\ell(D)$ is equivalent to a
lift $\sigma_B\colon \Gamma(B)\to \Gamma_A^\ell(D)$,
$\sigma_B(b)(a(m))=\sigma_A(a)(b(m))$.

\begin{example}\label{td_splitting}
Let $q_E\colon E\to M$ be a vector bundle.  Then the tangent bundle
$TE$ has two vector bundle structures; one as the tangent bundle of
the manifold $E$, and the second as a vector bundle over $TM$. The
structure maps of $TE\to TM$ are the derivatives of the structure maps
of $E\to M$.
\begin{equation*}
\begin{xy}
\xymatrix{
TE \ar[d]_{Tq_E}\ar[r]^{p_E}& E\ar[d]^{q_E}\\
 TM\ar[r]_{p_M}& M}
\end{xy}
\end{equation*} The space $TE$ is a double vector bundle with core bundle
$E \to M$. The map $\bar{}\,\colon E\to p_E^{-1}(0^E)\cap
(Tq_E)^{-1}(0^{TM})$ sends $e_m\in E_m$ to $\bar
e_m=\left.\frac{d}{dt}\right\an{t=0}te_m\in T_{0^E_m}E$.
Hence the core vector field corresponding to $e \in \Gamma(E)$ is the
vertical lift $e^{\uparrow}\colon E \to TE$, i.e.~the vector field with
flow $\phi\colon E\times \R\to E$, $\phi_t(e'_m)=e'_m+te(m)$. An
element of $\Gamma^\ell_E(TE)=\mx^\ell(E)$ is called a \textbf{linear
  vector field}. It is well-known (see e.g.~\cite{Mackenzie05}) that a
linear vector field $\xi\in\mx^l(E)$ covering $X\in\mx(M)$ corresponds
to a derivation $D^*\colon \Gamma(E^*) \to \Gamma(E^*)$ over $X\in
\mx(M)$. The precise correspondence is
given by\footnote{Since its flow is a flow of vector bundle morphisms,
  a linear vector field sends linear functions to linear functions and
  pullbacks to pullbacks.}
\begin{equation}\label{ableitungen}
\xi(\ell_{\varepsilon}) 
= \ell_{D^*(\varepsilon)} \,\,\,\, \text{ and }  \,\,\, \xi(q_E^*f)= q_E^*(X(f))
\end{equation}
for all $\varepsilon\in\Gamma(E^*)$ and $f\in C^\infty(M)$. We write
$\widehat D$ for the linear vector field in $\mx^l(E)$ corresponding
in this manner to a derivation $D$ of $\Gamma(E)$.  The choice of a
linear splitting $\Sigma$ for $(TE; TM, E; M)$ is equivalent to the
choice of a connection on $E$: we can define $\nabla\colon
\mx(M)\times\Gamma(E)\to \Gamma(E)$ by
$\sigma_{TM}(X)=\widehat{\nabla_X}$ for all $X\in\mx(M)$. Conversely,
a connection $\nabla\colon \mx(M)\times\Gamma(E)\to\Gamma(E)$ defines
a lift $\sigma_{TM}^\nabla\colon\mx(M)\to\mx^l(E)$ and a linear
splitting $\Sigma^\nabla\colon TM\times_M E \to TE$.  Given $\nabla$,
it is easy to see using the equalities in \eqref{ableitungen} that the
Lie bracket of vector fields on $E$ is given by $
\left[\sigma_{TM}^\nabla(X),
  \sigma_{TM}^\nabla(Y)\right]=\sigma_{TM}^\nabla[X,Y]-R_\nabla(X,Y)^\uparrow$,
$\left[\sigma_{TM}^\nabla(X), e^\uparrow\right]=(\nabla_Xe)^\uparrow$,
$\left[e_1^\uparrow,e_2^\uparrow\right]=0$, for all $X,Y\in\mx(M)$ and
$e,e_1,e_2\in\Gamma(E)$.
\end{example}

A double vector bundle $(D;A,B;M)$ is a \textbf{VB-algebroid}
(\cite{Mackenzie98x}; see also \cite{GrMe10a}) if there are Lie
algebroid structures on $D\to B$ and $A\to M$, such that the anchor
$\Theta\colon D \to TB$ is a morphism of double vector bundles over
$\rho_A\colon A \to TM$ on one side
and if the Lie bracket is linear:
\begin{equation*} [\Gamma^\ell_B(D), \Gamma^\ell_B(D)] \subset
  \Gamma^\ell_B(D), \qquad [\Gamma^\ell_B(D), \Gamma^c_B(D)] \subset
  \Gamma^c_B(D), \qquad [\Gamma^c_B(D), \Gamma^c_B(D)]= 0.
\end{equation*}
The vector bundle $A\to M$ is then also a Lie algebroid, with anchor
$\rho_A$ and bracket defined as follows: if $\xi_1,
\xi_2\in\Gamma^\ell_B(D)$ are linear over $a_1,a_2\in\Gamma(A)$, then
the bracket $[\xi_1,\xi_2]$ is linear over $[a_1,a_2]$.

\begin{example}\label{example_TAT*A}
  Consider a Lie algebroid $A$.  For $a\in\Gamma(A)$, we have two
  particular types of sections of $TA\to TM$: the linear section
  $Ta\colon TM\to TA$, and the core section $a^\dagger\colon TM\to
  TA$,
  $a^\dagger(x_m)=T_m0^Ax_m+_{p_A}\left.\frac{d}{dt}\right\an{t=0}t\cdot
  a(m)$.  The identities $[Ta, Tb]=T[a,b]$, $[Ta,
  b^\dagger]=[a,b]^\dagger$, $[a^\dagger, b^\dagger]=0$,
  $\rho_{TA}(Ta) =\widehat{[\rho(a),\cdot]}\in\mx^l(TM)$ and
  $\rho_{TA}(a^\dagger)= (\rho(a))^\uparrow\in\mx^c(TM)$ define a
  VB-algebroid structure $(TA\to TM, A\to M)$.

  The cotangent space $(T^*A\to A^*, A\to M)$ also is a
  VB-algebroid. The projection $r_A\colon T^*A\to A^*$ is given by
  $r_A(\alpha_{a_m})(b_m)=\alpha_{a_m}\left(\left.\frac{d}{dt}\right\an{t=0}a_m+tb_m
  \right)$ for all $b_m\in A_m$.  For $\theta\in\Omega^1(M)$ we have
  $\theta^\dagger\in\Gamma_{A^*}^c(T^*A)$,
  $\theta^\dagger(\alpha(m))=\dr_{0^A_m}\ell_\alpha-q_{A}^*\theta(0^A_m)$
  and for $a\in\Gamma(A)$, the section $a^R\in\Gamma^l_{A^*}(T^*A)$ is
  defined by $a^R(\alpha(m))=
  \dr_{a(m)}(\ell_\alpha-q_A^*\langle\alpha, a\rangle)$ for $\alpha\in
  \Gamma(A^*)$.  The Lie algebroid structure on $T^*A\to A^*$ is given
  by $[a_1^R, a_2^R]=[a_1,a_2]^R$, $[a^R,
  \theta^\dagger]=(\ldr{\rho(a)}\theta)^\dagger$, $[\theta_1^\dagger,
  \theta_2^\dagger]=0$,
  $\rho_{T^*A}(a^R)=\widehat{\ldr{a}}\in\mx(A^*)$ and
  $\rho_{T^*A}(\theta^\dagger)=(\rho^*\theta)^\uparrow \in\mx(A^*)$
  for $a,a_i\in\Gamma(A)$ and $\theta,\theta_i\in\Omega^1(M)$.
\medskip

The space
  $TA\times_A T^*A$ has the structure of a double vector bundle:
\begin{equation*}
\begin{xy}
\xymatrix{
TA\oplus T^*A\ar[r]^{\Pi_A}\ar[d]_{c_A}& TM\oplus A^*\ar[d]^{p_M}\\
 A\ar[r]_{q}&M
}
\end{xy}
\end{equation*}
The vertical vector bundle is the Pontryagin bundle $TA\oplus T^*A$ of
$A$ seen as a manifold, and the horizontal projection is defined by
$\Pi_A(v_{a_m}, \alpha_{a_m})=(q_*(v_{a_m}), r_A(\alpha_{a_m}))$.

The Lie algebroid $TA\oplus T^*A\to TM\oplus A^*$ is defined as the
pullback to the diagonals $\Delta_A\to \Delta_M$ of the Lie algebroid
$TA\times T^*A\to TM\times A^*$.  We have the special linear sections
$a^l:=(Ta, a^R)\colon TM\oplus A^*\to TA\oplus T^*A$ for $a\in
\Gamma(A)$ and $(b,\theta)^\dagger:=(b^\dagger, \theta^\dagger)\colon
TM\oplus A^*\to TA\oplus T^*A$ for $(b,\theta)\in\Gamma(A\oplus
T^*M)$.  We write $\Theta\colon TA\oplus T^*A\to T(TM\oplus A^*)$ for the
anchor of $TA\oplus T^*A\to TM\oplus A^*$.
The Lie algebroid $(TA\oplus T^*A, \Theta, [\cdot\,,\cdot])$ is described by the following identities
\begin{equation*}
\begin{split}
[a_1^l, a_2^l]&=[a_1,a_2]^l, \quad [a^l, \tau^\dagger]=(\ldr{a}\tau)^\dagger, \quad [\tau_1^\dagger, \tau_2^\dagger]=0\\
\Theta(a^l)&=\widehat{\ldr{a}}, \quad \Theta(\tau^\dagger)=((\rho,\rho^t)\tau)^\uparrow. 
\end{split}
\end{equation*}
for $a,a_1,a_2\in\Gamma(A)$, $\tau,\tau_1,\tau_2\in\Gamma(A\oplus
T^*M)$. 
\end{example}

Let $E_0,E_1$ be two vector bundles over the same base $M$ as $A$, and
$\partial\colon E_0\to E_1$ a vector bundle morphism. A
\textbf{$2$-term representation up to homotopy of $A$ on
  $\partial\colon E_0\to E_1$} \cite{ArCr12,GrMe10a} is the collection
of two $A$-connections, $\nabla^0$ and $\nabla^1$ on $E_0$
  and $E_1$, respectively, such that $\partial \circ \nabla^0 =
  \nabla^1 \circ \partial$, and an element $R \in \Omega^2(A,
  \Hom(E_1, E_0))$ such that $R_{\nabla^0} = R\circ \partial$,
  $R_{\nabla^1}=\partial \circ R$ and $\dr_{\nabla^{\Hom}}R=0$,
  where $\nabla^{\Hom}$ is the connection induced on $\Hom(E_1,E_0)$
  by $\nabla^0$ and $\nabla^1$ and the operator $\dr_{\nabla^{\Hom}}$ on $\Omega^\bullet(A,\Hom(E_1,E_0))$ is
given by the Koszul formula:
\begin{equation*}
\begin{split}
  \dr_{\nabla^{\Hom}}\omega(a_1,\ldots,a_{k+1})=
&\sum_{i<j}(-1)^{i+j}\omega([a_i,a_j],a_1,\ldots,\hat a_i,\ldots,\hat a_j,\ldots, a_{k+1})\\
  &\qquad +\sum_i(-1)^{i+1}\nabla^{\Hom}_{a_i}(\omega(a_1,\ldots,\hat
  a_i,\ldots,a_{k+1}))
\end{split}
\end{equation*}
for all $\omega\in\Omega^k(A,\Hom(E_1,E_0))$ and $a_1,\ldots,a_{k+1}\in\Gamma(A)$.

\medskip

Consider again a VB-algebroid $(D\to B, A\to M)$. The anchor $\Theta(c^\dagger)$ of a core
section $c^\dagger\in\Gamma_B^c(D)$ is given by
$\Theta(c^\dagger)=(\partial_B c)^\uparrow$, defining a vector bundle
morphism $\partial_B\colon C\to B$. Choose a linear
splitting $\Sigma\colon A\times_MB\to D$. Since the anchor $\Theta$ of a linear
section is linear, for each $a\in \Gamma(A)$ the vector field
$\Theta(\sigma_A(a))\in\mx^l(B)$ defines a derivation of $\Gamma(B)$ with
symbol $\rho(a)$. This defines a linear
connection $\nabla^{B}\colon \Gamma(A)\times\Gamma(B)\to\Gamma(B)$:
$\Theta(\sigma_A(a))=\widehat{\nabla_a^{B}}$
for all $a\in\Gamma(A)$.    Since the bracket of a linear
section with a core section is again a core section, we find a linear connection
$\nabla^{C}\colon\Gamma(A)\times\Gamma(C)\to\Gamma(C)$ such
that $[\sigma_A(a),c^\dagger]=(\nabla_a^{C}c)^\dagger$ for all
$c\in\Gamma(C)$ and $a\in\Gamma(A)$.  The difference
$\sigma_A[a_1,a_2]-[\sigma_A(a_1), \sigma_A(a_2)]$ is a core-linear
section for all $a_1,a_2\in\Gamma(A)$.  This defines a vector valued
form $R\in\Omega^2(A,\operatorname{Hom}(B,C))$ by
$[\sigma_A(a_1), \sigma_A(a_2)]=\sigma_A[a_1,a_2]-\widetilde{R(a_1,a_2)}$,
for all $a_1,a_2\in\Gamma(A)$. For more details on these
constructions, see \cite{GrMe10a}, where the following result is
proved.
\begin{theorem}\label{rajan}
  Let $(D \to B; A \to M)$ be a VB-algebroid and choose a linear
  splitting $\Sigma\colon A\times_MB\to D$.  The triple
  $(\nabla^{B},\nabla^{C},R)$ defined as above is a
  $2$-term representation up to homotopy of $A$ on the complex $\partial_B\colon C\to B$.

  Conversely, let $(D;A,B;M)$ be a double vector bundle with core $C$
  such that $A$ has a Lie algebroid structure, and choose a linear
  splitting $\Sigma\colon A\times_MB\to D$. Then if
  $(\nabla^{B},\nabla^{C},R)$ is a $2$-term representation up to
  homotopy of $A$ on a complex $\partial_B\colon C\to B$, then the
  equations above define a VB-algebroid structure on $(D\to B; A\to
  M)$.

\end{theorem}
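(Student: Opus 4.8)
The plan is to prove the two implications of the statement separately; since, for the fixed splitting $\Sigma$, the assignment of a triple $(\nabla^B,\nabla^C,R)$ to a VB-algebroid structure and the assignment of a VB-algebroid structure to such a triple are given by the \emph{same} formulas on the distinguished sections, they are automatically mutually inverse, and nothing further need be said about that. Throughout I would use that $\Gamma_B(D)$ is generated as a $C^\infty(B)$-module by the linear sections $\sigma_A(a)+\widetilde\psi$ ($a\in\Gamma(A)$, $\psi\in\Gamma(B^*\otimes C)$) and the core sections $c^\dagger$ ($c\in\Gamma(C)$); hence, once the anchor $\Theta$ is fixed, a $C^\infty(B)$-Leibniz bracket on $\Gamma_B(D)$ is determined by its values on these sections, and the Lie algebroid axioms need only be checked on them. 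I would also use repeatedly the auxiliary identities $\Theta(\widetilde\psi)=\widehat{\partial_B\circ\psi}$ and $[\widetilde\psi,c^\dagger]=-(\psi(\partial_B c))^\dagger$, both obtained by writing $\widetilde\psi$ as a sum of sections $\ell_\beta\,c^\dagger$ and using the Leibniz rule, together with $\Theta(c^\dagger)=(\partial_B c)^\uparrow$ and $\Theta(\sigma_A(a))=\widehat{\nabla^B_a}$.

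For the first implication, given the VB-algebroid and the splitting $\Sigma$, I would extract $\partial_B$, $\nabla^B$, $\nabla^C$ and $R$ as in the paragraph preceding the statement, and then verify the four axioms of a $2$-term representation up to homotopy one at a time. Applying the anchor $\Theta$ --- which is bracket-preserving, $\Theta[d_1,d_2]=[\Theta d_1,\Theta d_2]$ --- to $[\sigma_A(a),c^\dagger]=(\nabla^C_a c)^\dagger$ and using the bracket relation $[\widehat{\nabla^B_a},e^\uparrow]=(\nabla^B_a e)^\uparrow$ of Example~\ref{td_splitting} gives $\partial_B\circ\nabla^C=\nabla^B\circ\partial_B$. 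Applying $\Theta$ to $[\sigma_A(a_1),\sigma_A(a_2)]=\sigma_A[a_1,a_2]-\widetilde{R(a_1,a_2)}$, together with $[\widehat{D_1},\widehat{D_2}]=\widehat{[D_1,D_2]}$, gives $R_{\nabla^B}=\partial_B\circ R$. The Jacobi identity for the VB-algebroid bracket applied to $\sigma_A(a_1),\sigma_A(a_2),c^\dagger$, using the auxiliary identity for $[\widetilde\psi,c^\dagger]$ and $[c_1^\dagger,c_2^\dagger]=0$, gives $R_{\nabla^C}=R\circ\partial_B$; and the Jacobi identity applied to the three linear sections $\sigma_A(a_1),\sigma_A(a_2),\sigma_A(a_3)$ has the Jacobi identity of $A$ as its base component and $\dr_{\nabla^{\Hom}}R=0$ as its core-linear component. (The signs are absorbed into the conventions in the definitions of $R$ and of a $2$-term representation up to homotopy.)

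For the converse, I would use $\Sigma$ to identify $D$ with the decomposed double vector bundle $A\times_MB\times_MC$ of Example~\ref{trivial_dvb}, define $\Theta\colon D\to TB$ by $\Theta(\sigma_A(a))=\widehat{\nabla^B_a}$ and $\Theta(c^\dagger)=(\partial_B c)^\uparrow$, extended fibrewise linearly --- a morphism of double vector bundles over $\rho_A$, since $\widehat{\nabla^B_a}$ covers $\rho(a)\in\mx(M)$ while $(\partial_B c)^\uparrow$ is vertical --- and define a bracket on $\Gamma_B(D)$ by $[\sigma_A(a_1),\sigma_A(a_2)]=\sigma_A[a_1,a_2]-\widetilde{R(a_1,a_2)}$, $[\sigma_A(a),c^\dagger]=(\nabla^C_a c)^\dagger$, $[c_1^\dagger,c_2^\dagger]=0$, with the remaining brackets of linear sections forced by the Leibniz rule (via $\widetilde\psi=\sum\ell_\beta\,c^\dagger$) to land in the core-linear sections, extended to all of $\Gamma_B(D)$ by the Leibniz rule relative to $\Theta$. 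I would then check: that this extension is well defined (this uses only that $R$ is tensorial and that $\nabla^B,\nabla^C$ are connections); that the bracket maps $\Gamma^\ell_B(D)\times\Gamma^\ell_B(D)$ into $\Gamma^\ell_B(D)$, $\Gamma^\ell_B(D)\times\Gamma^c_B(D)$ into $\Gamma^c_B(D)$ and $\Gamma^c_B(D)\times\Gamma^c_B(D)$ to $0$, and is linear over $[a_1,a_2]$ when projected to $A$ (so the VB-algebroid is compatible with the given Lie algebroid structure on $A$); that $\Theta$ is bracket-preserving, which on the distinguished sections reduces to the axioms $\partial_B\circ\nabla^C=\nabla^B\circ\partial_B$ and $R_{\nabla^B}=\partial_B\circ R$; that the Leibniz identity holds (automatic by construction); and that the Jacobi identity holds --- which, by the generating property, reduces to the three-linear case ($\dr_{\nabla^{\Hom}}R=0$), the two-linear/one-core case ($R_{\nabla^C}=R\circ\partial_B$), and the one-linear/two-core and three-core cases (trivial). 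This yields the desired VB-algebroid structure on $D\to B$.

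The hard part will be the converse, specifically checking that the bracket prescribed on the distinguished sections extends consistently by the Leibniz rule to a well-defined $\R$-bilinear bracket on all of $\Gamma_B(D)$, and that the Jacobi and anchor axioms hold on the mixed triples. Both come down to tracking precisely how $\partial_B$ interleaves $\nabla^B$, $\nabla^C$ and $R$ --- which is exactly the content of the four representation-up-to-homotopy axioms --- while keeping the two additive structures of $D$ and the sign conventions consistent; this is routine but bookkeeping-heavy, and is carried out in detail in \cite{GrMe10a}.
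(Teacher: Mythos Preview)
The paper does not actually prove this theorem: it is quoted as a result of \cite{GrMe10a}, and the paragraph preceding the statement merely explains how $\partial_B$, $\nabla^B$, $\nabla^C$ and $R$ are extracted from the VB-algebroid and the splitting. Your outline is correct and is precisely the argument of \cite{GrMe10a}: one direction reads off the four axioms of a $2$-term representation up to homotopy from the anchor compatibility and the Jacobi identity on the distinguished triples of sections, and the converse defines anchor and bracket on generators via the given formulas, extends by Leibniz, and reduces the Lie algebroid axioms back to those four conditions. Your closing remark already acknowledges this, so there is nothing to compare against in the present paper.
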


\begin{example}\label{tangent_double_2_reps}
  Let $E\to M$ be a vector bundle. By Example~\ref{td_splitting} the
  tangent double $TE$ has a VB-algebroid structure $(TE\to E, TM\to
  M)$. Consider a linear splitting $\Sigma\colon E\times_M TM\to TE$
  and the corresponding linear connection $\nabla\colon
  \mx(M)\times\Gamma(E)\to\Gamma(E)$.  The representation up to homotopy
  corresponding to this splitting is given by $\partial_E=\id_E\colon
  E\to E$, $(\nabla,\nabla,R_\nabla)$.
\end{example}
\begin{example} Let $A$ be a Lie algebroid and consider the
  VB-algebroid $(TA\to TM,A\to M)$; see Example~\ref{example_TAT*A}.
  A linear connection $\nabla\colon\mx(M)\times\Gamma(A)\to\Gamma(A)$
  defines a horizontal lift
  $\sigma_{A}\colon\Gamma(A)\to\Gamma_{TM}^l(TA)$.  The corresponding
  $2$-term representation up to homotopy is on
  $\partial_{TM}=\rho\colon A\to TM$ and given by $(\nabla^{\rm bas}, \nabla^{\rm
    bas}, R_\nabla^{\rm bas})$, where $\nabla^{\rm bas}\colon
  \Gamma(A)\times\Gamma(A)\to\Gamma(A)$ and $\nabla^{\rm bas}\colon
  \Gamma(A)\times\mx(M)\to\mx(M)$ are the basic connections associated
  to $\nabla$:
\begin{align*}
\nabla_a^{\rm bas}a'&=[a,a']+\nabla_{\rho(a')}a, \qquad \nabla_a^{\rm bas}X=[\rho(a),X]+\rho(\nabla_{X}a)
\end{align*}
for all $a,a'\in\Gamma(A)$ and $X\in\mx(M)$ \cite{Fernandes02,ArCr12}. The tensor
$R_{\nabla}^{\rm bas}\in\Omega^2(A,\operatorname{Hom}(TM,A))$ is
defined by
\[R_{\nabla}^{\rm bas}(a_1,a_2)X=-\nabla_X [a_1, a_2] +
\left[\nabla_X a_1, a_2\right] +\left[a_1, \nabla_X a_2\right] -
\nabla_{\nabla_{a_1}^{\rm bas} X } \,a_2 + \nabla_{\nabla_{a_2}^{\rm
    bas} X }\, a_1
\]
for all $a_1,a_2\in\Gamma(A)$ and $X\in\mx(M)$.
\end{example}

\begin{example}
  We now consider the VB-algebroid $(TA\oplus T^*A\to TM\oplus A^*,
  A\to M)$ with core $A\oplus T^*M$.  
 Dorfman connections\footnote{$TM\oplus A^*$ is always anchored by
    $\pr_{TM}$.} $\Delta\colon \Gamma(TM\oplus
  A^*)\times\Gamma(A\oplus T^*M)\to\Gamma(A\oplus T^*M)$ are equivalent to linear splittings
$\Sigma^\Delta\colon (TM\oplus A^*)\times_M A\to TA\oplus T^*A$;
\[\Sigma^\Delta(X,\alpha)(a_m)=\left(T_ma X(m), \dr
  \ell_\alpha(a_m)\right)-\Delta_{(X,\alpha)}(a,0)^\uparrow(a_m),
\] where for $b\in
\Gamma(A)$ and $\theta\in \Omega^1(M)$, the pair
$(b,\theta)^\uparrow(a_m)\in T_{a_m}A\times T^*_{a_m}A$ is defined by
$(b,\theta)^\uparrow(a_m)=\left(b^\uparrow(a_m),
  (q_A^*\theta)(a_m)\right)$ \cite{Jotz13a}.

The horizontal lift
$\sigma^\Delta_A\colon\Gamma(A)\to\Gamma_{TM\oplus A^*}^l(TA\oplus _AT^*A)$ is given by 
$\sigma^\Delta_A (a)(v_m,\alpha_m)=(T_mav_m,\dr_{a_m}
\ell_\alpha)-\Delta_{(X,\alpha)}(a,0)^\uparrow(a_m)$
for any choice of section $(X,\alpha)\in\Gamma(TM\oplus A^*)$ such that
$(X,\alpha)(m)=(v_m,\alpha_m)$, or in other words by 
\[\sigma^\Delta_A (a)=(Ta, a^R)-\widetilde{\Omega_\cdot
a}=a^l-\widetilde{\Omega_\cdot a}
\]
for all $a\in \Gamma(A)$, where
$\Omega\colon \Gamma(TM\oplus A^*)\times\Gamma(A)\to\Gamma(A\oplus T^*M)$ is defined by
\begin{equation}\label{omega}
\Omega_{(X,\alpha)}a=\Delta_{(X,\alpha)}(a,0)-(0,\dr\langle\alpha, a\rangle).
\end{equation}

The two maps $\nabla^{\rm bas}\colon \Gamma(A)\times\Gamma(TM\oplus
A^*)\to \Gamma(TM\oplus A^*)$,
\begin{align}\label{nabla_bas}
  \nabla^{\rm
    bas}_a(X,\alpha)
  &=(\rho,\rho^t)(\Omega_{(X,\alpha)}a)+\ldr{a}(X,\alpha)
\end{align}
and $\nabla^{\rm bas}\colon  \Gamma(A)\times\Gamma(A\oplus T^*M)\to \Gamma(A\oplus T^*M)$,
\begin{align*}\label{nabla_bas1}
\nabla^{\rm
  bas}_a(a',\theta)
&=\Omega_{(\rho,\rho^t)(a',\theta)}a+\ldr{a}(a',\theta)
\end{align*}
are ordinary linear $A$-connections. The formula
\begin{multline*}
  R_\Delta^{\rm bas}(a_1,a_2)(X,\alpha)=-\Omega_{(X,\alpha)}[a_1,a_2]
  +\ldr{a_1}\left(\Omega_{(X,\alpha)}a_2\right)-\ldr{a_2}\left(\Omega_{(X,\alpha)}a_1\right)\\
  + \Omega_{\nabla^{\rm bas}_{a_2}(X,\alpha)}a_1-\Omega_{\nabla^{\rm
      bas}_{a_1}(X,\alpha)}a_2
\end{multline*}
defines a tensor $R_\Delta^{\rm
  bas}\in\Omega^2(A,\operatorname{Hom}(TM\oplus A^*,A\oplus T^*M))$.
We prove in \cite{Jotz13a} that the $2$-term representation up to
homotopy defined by the VB-algebroid $(TA\oplus T^*A\to TM\oplus A^*,
A\to M)$ and the splitting $\Sigma$ is the $2$-term representation of
$A$ on $(\rho,\rho^t)\colon A\oplus T^*M \to TM\oplus A^*$ given by
$(\nabla^{\rm bas},\nabla^{\rm bas},R_\Delta^{\rm bas})$.  Compare
this with the $2$-term representation up to homotopy given by a linear
splitting of $(TA\to TM, A\to M)$ as in Example
\ref{tangent_double_2_reps}.
\end{example}

\section{$A$-Manin pairs and Dirac bialgebroids}\label{def_Manin_IM}

A Manin pair over a manifold $M$ is a pair $(C,U)$ of vector bundles
over $M$, where $C$ has the structure of a Courant algebroid and $U$
is a Dirac structure in $C$ \cite{BuIgSe09}.  
\begin{definition}\label{def_A_Manin}
  Let $(A\to M, \rho, [\cdot\,,\cdot])$ be a Lie algebroid.
\begin{enumerate}
\item An \textbf{$A$-Manin pair over $M$} is a Manin pair $(C,U)$ over $M$, together
  with
  \begin{enumerate}
  \item an injective morphism $\iota\colon U\to TM\oplus A^*$ of
    vector bundles, such that $\rho_U:=\rho_C\an{U}=\pr_{TM}\circ
    \iota$ and 
  \item a (degenerate) Courant morphism $\Phi\colon
    A\oplus T^*M\to C$ such that
\[ \Phi(A\oplus T^*M)+U=C
\]
and $\langle u,\Phi(\tau)\rangle_C=\langle \iota(u),\tau\rangle$ for
all $(u,\tau)\in U\times_M(A\oplus T^*M)$.
\end{enumerate}
\item Let $(U\to M, \rho_U,[\cdot\,,\cdot]_U)$ be a Lie algebroid and
  $\iota\colon U\to TM\oplus A^*$ an injective vector bundle morphism
  that is compatible with the anchors: $\pr_{TM}\circ \iota=\rho_U$.
  The triple $(A,U,\iota)$ is a \textbf{Dirac bialgebroid} (over $A$)
  if $U$ is the Dirac structure and $\iota$ is the injective morphism
  of an $A$-Manin pair.
\item Two $A$-Manin pairs $(C,U)$ and $(C',U')$, and respectively two Dirac bialgebroids
  $(A,U,\iota)$ and $(A,U',\iota')$, are \textbf{equivalent} if
  they define the same Lie algebroid $\iota(U)=\iota'(U')$ in the
  $\pr_{TM}$-anchored vector bundle $TM\oplus A^*$.
\end{enumerate}
\end{definition}

By definition, an $A$-Manin pair determines an unique Dirac
bialgebroid $(A,U,\iota)$.  Conversely, we show that the equivalence
class of an $A$-Manin pair $(C,U)$ with structure maps $\iota,\Phi$ is
completely determined by the equivalence class of the corresponding
Dirac bialgebroid $(A,U,\iota)$.  More precisely, given $A$, $U$
and~$\iota$, we can reconstruct $C$ and $\Phi$ up to
isomorphism.  Consider a Dirac bialgebroid $(A,U,\iota)$ and identify
$U$ with $\iota(U)\subseteq A\oplus T^*M$.  If
$\tau\in U^\circ\subseteq TM\oplus A^*$, then $\Phi(\tau)$ satisfies
\[\langle u, \Phi(\tau)\rangle_C=\langle\tau,u\rangle=0
\]
for all $u\in U$. Since $U$ is a Dirac structure, we find that $\Phi$
restricts to a map $U^\circ\to U$. (Conversely, we find easily that
$\Phi(\tau)\in U$ if and only if $\tau\in U^\circ$.)  Next choose
$\tau_1\in U^\circ $ and $\tau_2\in A\oplus T^*M$.  Then \[\langle
\Phi(\tau_1), \tau_2\rangle=\langle
\Phi(\tau_1),\Phi(\tau_2)\rangle_C= \langle
\tau_1,\tau_2\rangle_d=\langle (\rho,\rho^t)\tau_1, \tau_2\rangle,
\] 
which shows that $\Phi\an{U^\circ}=(\rho,\rho^t)\an{U^\circ}\colon
U^\circ\to U$. In particular, $(\rho,\rho^t)$ sends $U^\circ$ to $U$,
and $U^\circ$ is isotropic in $A\oplus T^*M$.  Consider the vector
bundle map $U\oplus A\oplus T^*M\to C$, $(u,\tau)\mapsto
u+\Phi(\tau)$. By hypothesis, this map is surjective. Its kernel is
the set of pairs $(u,\tau)$ with $u=-\Phi(\tau)$, i.e.~the graph of
$-(\rho,\rho^t)\an{U^\circ}\colon U^\circ\to U$. It follows that $C$
can be identified with
\begin{equation}\label{C_ident}
\frac{U\oplus A\oplus T^*M}{\operatorname{graph}(-(\rho,\rho^t)\an{U^\circ}\colon
U^\circ\to U)}.
\end{equation}
We use the notation $u\oplus \tau$ for
$u+\Phi(\tau)\in C$. The anchor of $C$ is then $c\colon
C\to TM$, $c(u\oplus\tau)=\pr_{TM}(u)+\rho\circ\pr_A(\tau)$, and the
bracket is given by $\langle u_1\oplus\tau_1,
u_2\oplus\tau_2\rangle_C=\langle u_1+\Phi(\tau_1),
u_2+\Phi(\tau_2)\rangle_C= \langle u_1,\tau_2\rangle+\langle u_2,
\tau_1\rangle +\langle \tau_1, (\rho,\rho^t)\tau_2\rangle$.
The map $\mathcal D\colon C^\infty(M)\to\Gamma(C)$
is given by 
$\langle\langle u\oplus\tau, \mathcal D  f\rangle\rangle_C=
(\pr_{TM}(u)+\rho\circ\pr_A(\tau)) f$
for all $u\oplus \tau\in\Gamma(C)$, 
i.e. $\mathcal D f=0\oplus(0,\dr f)$.

We show that the Courant algebroid bracket
is uniquely determined by the Lie algebroid structure on $U$.  Take an
extension $\Delta\colon\Gamma(TM\oplus A^*)\times\Gamma(A\oplus
T^*M)\to\Gamma(A\oplus T^*M)$ of $\ldr{}^U$.  First note that since
the dual dull bracket on $\Gamma(TM\oplus A^*)$ is anchored by
$\pr_{TM}$, we have
\begin{equation}\label{useful}
\Delta_{(X,\alpha)}(a,\theta)=\Delta_u(a,0)+(0,\ldr{X}\theta)
\end{equation}
for $X\in\mx(M)$, $\alpha\in\Gamma(A^*)$, $a\in\Gamma(A)$ and
$\theta\in\Omega^1(M)$.  We define the connection $\nabla^{\rm
  bas}\colon \Gamma(A)\times\Gamma(TM\oplus A^*)\to\Gamma(TM\oplus
A^*)$ associated to $\Delta$ as in \eqref{nabla_bas},
i.e.~$\nabla^{\rm
  bas}_a\nu=(\rho,\rho^t)\bigl(\Delta_\nu(a,0)-(0,\dr\langle
\nu,(a,0)\rangle)\bigr)+\ldr{a}\nu$ for $a\in\Gamma(A)$ and
$\nu\in\Gamma(TM\oplus A^*)$.  We prove that the bracket on $\Gamma(C)$
is given by
\begin{align}\label{bracket_on_C}
  \lb u_1\oplus\tau_1,
  u_2\oplus\tau_2\rb_C&=([u_1,u_2]_U+\nabla_{\pr_A\tau_1}^{\rm
    bas}u_2-\nabla_{\pr_A\tau_2}^{\rm
    bas}u_1)\nonumber\\
  &\hspace*{0.5cm}\oplus([\tau_1,\tau_2]_d+\Delta_{u_1}\tau_2-\Delta_{u_2}\tau_1+(0,\dr\langle\tau_1,u_2\rangle)).
\end{align}
We know that $\lb u_1\oplus 0, u_2\oplus 0\rb_C=[u_1,u_2]_U\oplus 0$, 
$\lb 0\oplus \tau_1, 0\oplus\tau_2\rb_C=0\oplus[\tau_1,\tau_2]_d$
for $u_1,u_2\in\Gamma(U)$ and $\tau_1,\tau_2\in\Gamma(A\oplus T^*M)$.

Note that $C/U$ is isomorphic to $A\oplus T^*M/U^\circ$ via the map
$\Psi\colon C/U\to A\oplus T^*M/U^\circ$, $\overline{u\oplus\tau}\to\overline\tau$. These two vector bundles are
isomorphic to $U^*$ and $\ldr{}^U$ is given by 
$\ldr{u}^U\overline\tau=\overline{\Delta_u\tau}$ in $A\oplus
T^*M/U^\circ$ and by $\ldr{u}^U\overline{u'\oplus \tau}=\overline{\lb u\oplus 0,
u'\oplus\tau\rb_C}$ in $C/U$. Hence
$\ldr{u}^U\overline{0\oplus\tau}=(\Psi\inv\circ\ldr{u}^U\circ\Psi)\overline{0\oplus\tau}=\overline{0\oplus\Delta_u\tau}$
and so
\[\overline{\lb u\oplus 0, 0\oplus\tau\rb_C}=\overline{0\oplus\Delta_u\tau}.\]
We want to compute $v=v(\tau,u)\in\Gamma(U)$
such that $\lb u\oplus 0, 0\oplus \tau\rb_C=v\oplus\Delta_u\tau$.
First note that 
\[\lb u\oplus 0, 0\oplus \tau\rb_C+\lb 0\oplus \tau, u\oplus 0\rb_C
=\mathcal D\langle\langle u\oplus 0, 0\oplus
\tau\rangle\rangle_C=\mathcal D\langle \tau,u\rangle=(0,\dr\langle\tau,u\rangle).
\]
We write $u=(X,\alpha)$ and $\tau=(a,\theta)$.
Then, by the Leibniz property of the Courant algebroid bracket on $C$,
we find for $\tau'=(b,\omega)\in\Gamma(A\oplus T^*M)$:
\begin{align*}
\rho(a)\langle u, \tau'\rangle&\,\,=c(0\oplus\tau)\langle\langle u\oplus
0, 0\oplus\tau'\rangle\rangle_C\\
&\,\,\,=\langle\langle\lb 0\oplus\tau, u\oplus
0\rb, 0\oplus\tau'\rangle\rangle_C
+\langle\langle u\oplus
0, \lb 0\oplus\tau, 0\oplus\tau'\rb\rangle\rangle_C\\
&\,\,\,=\langle\langle (-v)\oplus(-\Delta_u\tau+(0,\dr\langle \tau, u\rangle)) , 0\oplus\tau'\rangle\rangle_C\\
&\qquad 
+\langle\langle u\oplus
0, 0\oplus(\ldr{a}\tau'+(0,-\ip{\rho(b)}\dr\theta))\rangle\rangle_C\\
&\overset{\eqref{useful}}{=}-\langle v,\tau'\rangle-\langle
(\rho,\rho^t)(\Delta_u(a,0)-(0,\dr\langle a,\alpha\rangle)),\tau'\rangle+\cancel{\rho(b)\langle\theta,X\rangle}\\
&\qquad -\cancel{\langle\ldr{X}\theta,\rho(b)\rangle}+\langle
u, \ldr{a}\tau'\rangle
-\cancel{\dr\theta(\rho(b),X)}
\end{align*}
This leads to
$-\langle v,\tau'\rangle=\langle
(\rho,\rho^t)(\Delta_u(a,0)-(0,\dr\langle a,\alpha\rangle))+\ldr{a}u,
\tau'\rangle$
and, since $\tau'$ was arbitrary, we have shown that
$\lb u\oplus 0, 0\oplus \tau\rb=(-\nabla^{\rm
  bas}_a u)\oplus\Delta_u\tau$. Note that \eqref{bracket_on_C} does
not depend on the choice of the extension $\Delta$ of $\ldr{}^U$.
We have proved the following proposition:
\begin{proposition}\label{eq_diracbi_manin}
The  map 
\[\left\{\begin{array}{c}
\text{ Equivalence classes }\\
\text{ of $A$-Manin pairs}
\end{array}\right\}\to \left\{\begin{array}{c}
\text{Equivalence classes of }\\
\text{Dirac bialgebroids over $A$}
\end{array}\right\},
\]
that sends the class of $(C,U)$ (with structure maps $\iota,\Phi$) to the class of $(A,U,\iota)$, is a bijection.
\end{proposition}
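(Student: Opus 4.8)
The plan is to verify the three standard things for a map between sets of equivalence classes: that it is well defined, surjective, and injective; all three are short given Definition~\ref{def_A_Manin} and the reconstruction carried out above. First, both equivalence relations in Definition~\ref{def_A_Manin}(3) are literally the same relation on the underlying data, namely ``$\iota(U)=\iota'(U')$ as Lie algebroids inside the $\pr_{TM}$-anchored bundle $TM\oplus A^*$''. Hence if two $A$-Manin pairs are equivalent, so are the associated Dirac bialgebroids, which gives well-definedness; and the converse implication gives injectivity at the level of classes. Surjectivity is immediate from part~(2) of Definition~\ref{def_A_Manin}: a Dirac bialgebroid $(A,U,\iota)$ is by construction the Dirac-structure-and-morphism datum of \emph{some} $A$-Manin pair $(C,U)$, which therefore maps to the class of $(A,U,\iota)$.

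What makes the correspondence substantive, and what I would actually write out, is that the whole $A$-Manin pair --- not just its equivalence class --- is recovered up to isomorphism from $(A,U,\iota)$; this is the content of the discussion preceding the statement, which I would reuse verbatim. Identifying $U$ with $\iota(U)\subseteq A\oplus T^*M$, one uses that $U$ is a Dirac structure in $C$ together with $\langle u,\Phi(\tau)\rangle_C=\langle\iota(u),\tau\rangle$ to see that $\Phi$ maps $U^\circ$ into $U$, and a second application of the pairing identity shows $\Phi\an{U^\circ}=(\rho,\rho^t)\an{U^\circ}$. Since $\Phi(A\oplus T^*M)+U=C$, this identifies $C$ with the quotient~\eqref{C_ident}, and the pairing, the anchor, and $\mathcal D$ are then read off directly from the formulas following~\eqref{C_ident}. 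It remains to pin down the bracket on $C$, which is the only place where there is real computation.

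The key step is to compute the ``mixed'' bracket $\lb u\oplus 0,\,0\oplus\tau\rb_C$. Choosing an extension $\Delta$ of $\ldr{}^U$ to a Dorfman connection on $A\oplus T^*M$, the quotient $C/U\cong U^*$ together with $\bar\Delta=\ldr{}^U$ forces the ``$\tau$-component'' of this bracket to be $\Delta_u\tau$, so $\lb u\oplus 0,\,0\oplus\tau\rb_C=v\oplus\Delta_u\tau$ for some $v\in\Gamma(U)$; pairing both sides against $0\oplus\tau'$ and expanding with the Leibniz rule of the Courant bracket on $C$, using $\lb 0\oplus\tau,\,u\oplus 0\rb_C=\mathcal D\langle\tau,u\rangle-\lb u\oplus 0,\,0\oplus\tau\rb_C$ and~\eqref{useful}, all the cross terms cancel and one gets $v=-\nabla^{\rm bas}_{\pr_A\tau}u$ with $\nabla^{\rm bas}$ the basic connection~\eqref{nabla_bas} of $\Delta$. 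Combined with the obvious $\lb u_1\oplus 0,u_2\oplus 0\rb_C=[u_1,u_2]_U\oplus 0$ and $\lb 0\oplus\tau_1,0\oplus\tau_2\rb_C=0\oplus[\tau_1,\tau_2]_d$ and bilinearity, this produces formula~\eqref{bracket_on_C}. The main obstacle --- the point I would check most carefully --- is that the right-hand side of~\eqref{bracket_on_C} is independent of the chosen extension $\Delta$ of $\ldr{}^U$: only then is the reconstructed bracket canonically attached to $(A,U,\iota)$, making the isomorphism type of $(C,\Phi)$ a function of $(A,U,\iota)$ alone. (One need not verify the Courant-algebroid axioms for~\eqref{C_ident} from scratch, since such a structure exists by the $A$-Manin pair one started from; uniqueness is all that is needed.) Given this, any two $A$-Manin pairs over the same Dirac bialgebroid are isomorphic, which in particular recovers injectivity and finishes the proof.
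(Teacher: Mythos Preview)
Your proposal is correct and follows essentially the same approach as the paper. You make explicit the observation that the two equivalence relations in Definition~\ref{def_A_Manin}(3) are literally identical (both say $\iota(U)=\iota'(U')$ as Lie algebroids in $TM\oplus A^*$), which makes well-definedness and injectivity immediate, while surjectivity is built into the definition of Dirac bialgebroid; this is precisely the logic the paper uses, though the paper packages injectivity via the reconstruction argument rather than stating the tautology directly. Your summary of the reconstruction of $(C,\Phi)$ from $(A,U,\iota)$ --- identifying $\Phi\an{U^\circ}=(\rho,\rho^t)\an{U^\circ}$, the quotient~\eqref{C_ident}, and the computation leading to~\eqref{bracket_on_C} --- matches the paper's discussion preceding the proposition, and you correctly note that this reconstruction proves more than strictly needed for the equivalence-class statement.
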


\section{The Dirac bialgebroid associated to a Dirac groupoid}\label{manin}
First we recall the definition of a Dirac groupoid, as well as some of
their properties.
\begin{definition}[\cite{Ortiz13}]
  A Dirac groupoid $(G\rr M, \mathsf D)$ is a Lie groupoid
  $G\rightrightarrows M$ with a Dirac structure $\mathsf D$ such that
  $\mathsf D\subseteq TG\oplus T^*G$ is a Lie subgroupoid.
\end{definition}
The Dirac structure $\mathsf D$ is then said to be
\textbf{multiplicative}.  We write $U$ for the set of units of
$\mathsf D$ seen as a groupoid, i.e.~for the vector bundle
$U=\mathsf D\cap (TM\oplus A^*)$.  The inclusion of $U$ in
$TM\oplus A^*$ is always called $\iota\colon U\to TM\oplus A^*$.
We write $K$ for the vector bundle $K=\mathsf D\cap (A\oplus T^*M)$.
We have $K=U^\circ$ and also $(\rho,\rho^t)(K)\subseteq U$ since
$\TT\tg(\mathsf D)\subseteq U$.

Let $u$ be a section of $U$.  Then there exists a smooth section $d$
of $\mathsf D$ such that $d\an{M}=u$ and $\TT\tg\circ d=u\circ \tg$
\cite{Jotz13b}.  We then write $u\sim_{\epsilon}d$ and $d\sim_{\tg}u$.  A
section $d$ of $\mathsf D$ satisfying these two conditions is
called a \textbf{star section}.  In \cite{Jotz13b} we proved the following two results.
\begin{theorem}\label{lie_der_of_xi_section}
  Let $(G\rr M,\mathsf D)$ be a Dirac groupoid, $d\sim_{\tg}u$ a star
  section of $\mathsf D$ and $a\in\Gamma(A)$. Then the Lie derivative
  $\ldr{a^r}d$ can be written as a sum
\begin{equation*}
\ldr{a^r}d=\mathcal L_ad+(\tau_{d,a})^r
\end{equation*}
with $\mathcal L_ad$ a star section and $\tau_{d,a}$ a section of
$A\oplus T^*M$.
\end{theorem}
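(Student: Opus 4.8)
The claimed decomposition, if it exists, is automatically unique, so it defines an operation. Indeed, if $d',d''$ are star sections of $\mathsf D$ and $\tau',\tau''\in\Gamma(A\oplus T^*M)$ satisfy $d'+(\tau')^r=d''+(\tau'')^r$, then $d'-d''$ is again a star section of $\mathsf D$, so $(d'-d'')\an M\in\Gamma(U)\subseteq\Gamma(TM\oplus A^*)$; on the other hand $d'-d''=(\tau''-\tau')^r$ is right-invariant, so $(d'-d'')\an M=\tau''-\tau'\in\Gamma(A\oplus T^*M)$. Since $(TM\oplus A^*)\cap(A\oplus T^*M)=0$ inside $TG\oplus T^*G\an M$, this forces $d'=d''$ and $\tau'=\tau''$. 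Hence, once existence is established, $\mathcal L_ad:=\ldr{a^r}d-(\tau_{d,a})^r$ is a well-defined star section, and $(a,d)\mapsto\mathcal L_ad$ is a well-defined map $\Gamma(A)\times\{\text{star sections of }\mathsf D\}\to\{\text{star sections of }\mathsf D\}$.

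For existence, the plan is to compute $\ldr{a^r}d=\lb(a^r,0),d\rb$ through the flow of $a^r$. Let $(\beta_t)$ be the local one-parameter group of bisections of $G$ with $\beta_0=\epsilon$ and $\tfrac{d}{dt}\an{t=0}\beta_t=a$; then the flow of $a^r\in\mx(G)$ is the left translation $\phi_t=L_{\beta_t}$, and the flow of the Dorfman derivation $\ldr{a^r}=\lb(a^r,0),\cdot\rb$ on $\Gamma(TG\oplus T^*G)$ is the natural lift $\TT\phi_t$ (push-forward on $TG$, inverse pull-back on $T^*G$). The key observation is that $\TT\phi_t$ is again a left translation, now in the Lie groupoid $TG\oplus T^*G\rr TM\oplus A^*$, namely $\TT\phi_t=L_{\TT\beta_t}$ for the natural lift $\TT\beta_t$ of the bisection $\beta_t$; this follows from the identity $T(L_\beta)=L_{T\beta}$ for the tangent groupoid $TG\rr TM$ and its counterpart for the cotangent groupoid $T^*G\rr A^*$ (the cotangent side requiring some care). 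Writing $\ldr{a^r}d=\tfrac{d}{dt}\an{t=0}(\TT\phi_{-t}\circ d\circ\phi_t)$ and using the defining property $\TT\tg\circ d=u\circ\tg$ of a star section, one obtains, for $g\in G$ and $m=\tg(g)$,
\[
(\TT\phi_{-t}\circ d\circ\phi_t)(g)=\TT(\beta_t^{-1})\bigl(u(\tg\beta_t(m))\bigr)\star d\bigl(\beta_t(m)\cdot g\bigr),
\]
a product of two $\star$-composable elements of $TG\oplus T^*G$ lying over $g$. Differentiating at $t=0$ with the Leibniz rule for $\TT\m$: the factor $\TT(\beta_t^{-1})(u(\tg\beta_t(m)))$ passes through the unit $1_{u(m)}$ at $t=0$, and its contribution is the right translate along $d(g)$ of a vector in $\ker\TT\s$, i.e. the value at $g$ of a right-invariant section $(\tau_{d,a})^r$ with $\tau_{d,a}\in\Gamma(\ker\TT\s\an M)=\Gamma(A\oplus T^*M)$; the remaining contribution, coming from $d(\beta_t(m)\cdot g)$, is — using multiplicativity of $\mathsf D$ — the value at $g$ of a star section $\mathcal L_ad$ of $\mathsf D$. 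One then identifies $\tau_{d,a}$ concretely as the $(A\oplus T^*M)$-component of $\ldr{a^r}d\an M$ in the splitting $TG\oplus T^*G\an M=(TM\oplus A^*)\oplus(A\oplus T^*M)$, and checks directly that $(\mathcal L_ad)\an M=\ldr{a^r}d\an M-\tau_{d,a}$ lies in $\Gamma(U)$ and that $\TT\tg\circ\mathcal L_ad=(\mathcal L_ad)\an M\circ\tg$.

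The main obstacle is the differentiation step, and within it the use of multiplicativity to guarantee that $\mathcal L_ad$ actually takes values in $\mathsf D$. Here I would first derive, from $\mathsf D$ being a Lie subgroupoid of $TG\oplus T^*G\rr TM\oplus A^*$, that $\mathsf D\an M=U\oplus K$ (a dimension count, using $K=U^\circ$ and that the two relevant subbundles of $TG\oplus T^*G\an M$ meet trivially) and that $\Gamma(\mathsf D)$ is generated over $C^\infty(G)$ by star sections together with the right-invariant sections $\tau^r$ with $\tau\in\Gamma(K)$ — the ``linear-plus-core'' description of the VB-subgroupoid $\mathsf D$ — and then use this to control the $\TT\m$-product above. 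That multiplicativity is genuinely needed here, and not automatic, is already visible in the presymplectic corner case $\mathsf D=\graf(\omega^\flat)$ with $\omega$ a multiplicative closed $2$-form: for $d=(X,\ip X\omega)$ one has $\ldr{a^r}d=([a^r,X],\ip{[a^r,X]}\omega+\ip X\ldr{a^r}\omega)$, which lies in $\Gamma(\mathsf D)$ only when $\ldr{a^r}\omega=0$, so the correction $(\tau_{d,a})^r$ must absorb $(0,\ip X\ldr{a^r}\omega)$ together with the non-star part of $[a^r,X]$, and the necessary cancellations hold because $\omega$ is multiplicative. An alternative, avoiding the exponential flow of $a^r$, would be to verify the identity directly on a generating set of $\Gamma(\mathsf D)$, using the local normal form of a multiplicative Dirac structure near $M$ in terms of $U$, $K$ and a Dorfman connection extending $\ldr{}^U$, and reducing to the bracket relations among star and $K$-valued right-invariant sections, which are exactly the infinitesimal form of multiplicativity and follow from $T\tg$-naturality of the Dorfman bracket together with right-invariance.
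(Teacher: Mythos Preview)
This theorem is quoted from \cite{Jotz13b} and not proved in the present paper, so there is no proof here to compare against directly. Your uniqueness argument is correct, and your existence strategy via the flow $\phi_t=L_{\beta_t}$ and the factorisation $(\TT\phi_{-t}\circ d\circ\phi_t)(g)=A(t)\star d(\beta_t(m)g)$, $A(t)=(\TT\beta_t^{-1})(u(\tg\beta_t(m)))$, is sound; the cotangent side of ``$\TT\phi_t$ is a left translation in $TG\oplus T^*G\rr TM\oplus A^*$'' follows from $L_\beta\circ\m=\m\circ(L_\beta\times\id)$ after dualising to $(L_\beta)_*(\alpha\star\alpha')=((L_\beta)_*\alpha)\star\alpha'$ in $T^*G\rr A^*$.

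The genuine gap is in the differentiation and the appeal to multiplicativity. Your factor $A(t)$ lies over the moving point $(\beta_t(m))^{-1}\in G$, so $\dot A(0)$ is a tangent vector at $1_{u(m)}$ rather than an element of a fibre, and there is no direct ``right translation along $d(g)$''. More importantly, the bisection $\TT\beta_t^{-1}$ has no reason to take values in $\mathsf D$, so neither factor of your product is visibly in $\mathsf D$, and your claim that ``using multiplicativity of $\mathsf D$'' the second contribution is a star section is only asserted. The fix is to insert $d(\beta_t(m))^{\pm1}$ and rewrite
\[
(\TT\phi_{-t}\circ d\circ\phi_t)(g)=A'(t)\star\tilde d_t(g),\qquad
A'(t)=A(t)\star d(\beta_t(m)),\quad
\tilde d_t(g)=d(\beta_t(m))^{-1}\star d(\beta_t(m)g).
\]
Now $A'(t)\in(TG\oplus T^*G)_m$ and $\tilde d_t(g)\in(TG\oplus T^*G)_g$ for all $t$, and --- this is where multiplicativity of $\mathsf D$ genuinely enters --- each $\tilde d_t$ is a star section of $\mathsf D$, over $u_t:=\TT\s\circ d\circ\beta_t\in\Gamma(U)$. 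The VB-groupoid interchange law (linearity of $\star$ over $\m$) then gives $\ldr{a^r}d(g)=\dot A'(0)\star\dot{\tilde d}_0(g)$; decomposing $\dot A'(0)$ in the splitting $(TG\oplus T^*G)_m=(TM\oplus A^*)_m\oplus(A\oplus T^*M)_m$ as $\dot u_0(m)+\tau_{d,a}(m)$ and applying interchange once more yields $\ldr{a^r}d=\dot{\tilde d}_0+(\tau_{d,a})^r$, with $\mathcal L_ad:=\dot{\tilde d}_0$ a star section as the derivative of a curve of star sections. Your final paragraph gestures towards this structure but does not isolate the crucial family $\tilde d_t$.
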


\begin{theorem}\label{lie_algebroid_dual}
  Let $(G\rr M,\mathsf D)$ be a Dirac groupoid.  Then there is an
  induced bracket $[\cdot\,,\cdot]_U\colon \Gamma(U)\times \Gamma(U)\to
  \Gamma(U)$ defined by $ [u, u']_U=\lb d,d'\rb\an{M}$ for any choice
  of star sections $d\sim_\tg u$, $d'\sim_\tg u'$ of $\mathsf D$.  The
  triple $(U, [\cdot\,,\cdot]_U, \rho_U\colon =\pr_{TM})$ is a Lie algebroid
  over $M$.
\end{theorem}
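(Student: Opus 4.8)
The plan is to verify the Lie algebroid axioms for $[\cdot\,,\cdot]_U$ by transporting them from the Lie algebroid $(\mathsf D\to G,\lb\cdot\,,\cdot\rb,\pr_{TG})$: because $\mathsf D$ is a Dirac structure it is isotropic, so $\lb\cdot\,,\cdot\rb$ is skew-symmetric on $\Gamma(\mathsf D)$ and satisfies there the Jacobi and Leibniz identities. The only input I would take from \cite{Jotz13b} is the existence of star sections, so that the formula in the statement is meaningful for every $u\in\Gamma(U)$; everything else follows from the Courant--Dorfman formulas together with the identifications $A=T^{\s}G\an M\subseteq TG\an M$ and $A^*=(TM)^\circ\subseteq T^*G\an M$. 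Under the latter, a section $u=(Y,\alpha)$ of $U\subseteq TM\oplus A^*$, viewed along $M$ inside $TG\oplus T^*G$, has $TG$-component the vector field $Y$ (hence tangent to $M$ along $M$) and $T^*G$-component $\alpha$ (hence annihilating $TM$ along $M$).

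First I would show $\lb d,d'\rb\an M\in\Gamma(U)$ for star sections $d=(X_d,\theta_d)\sim_\tg u$, $d'=(X_{d'},\theta_{d'})\sim_\tg u'$. Closure of $\mathsf D$ under $\lb\cdot\,,\cdot\rb$ gives $\lb d,d'\rb\in\Gamma(\mathsf D)$, so it suffices to check that $\lb d,d'\rb\an M$ is valued in $TM\oplus A^*=TM\oplus(TM)^\circ$. By the previous paragraph $X_d,X_{d'}$ are tangent to $M$ along $M$ and $\theta_d(m),\theta_{d'}(m)$ annihilate $T_mM$; hence $[X_d,X_{d'}]\an M$ is tangent to $M$, and a short computation with the Cartan calculus shows that $\bigl(\ldr{X_d}\theta_{d'}-\ip{X_{d'}}\dr\theta_d\bigr)(m)$ annihilates $T_mM$ for every $m\in M$. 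Thus $\lb d,d'\rb\an M$ is a section of $\mathsf D\cap(TM\oplus A^*)=U$.

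Well-definedness is the delicate point. If $\tilde d\sim_\tg u$ is a second star section, then $e:=\tilde d-d$ is a section of $\mathsf D$ vanishing along $M$. On a trivialising open set write $e=\sum_j h_j e_j$ with $\{e_j\}$ a local frame of $\mathsf D$, so $h_j\an M=0$; the Leibniz identity in the Lie algebroid $\mathsf D$ gives $\lb e,d'\rb=\sum_j\bigl(h_j\lb e_j,d'\rb-(X_{d'}(h_j))\,e_j\bigr)$, and each term vanishes at every $m\in M$ because $h_j(m)=0$ while $X_{d'}(m)\in T_mM$ and $h_j\an M=0$ force $(X_{d'}h_j)(m)=0$. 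Globalising by a partition of unity, $\lb e,d'\rb\an M=0$, so $\lb d,d'\rb\an M$ depends only on $d\an M=u$; by skew-symmetry on $\Gamma(\mathsf D)$ it likewise depends only on $d'\an M=u'$. Hence $[u,u']_U$ is well defined and $\R$-bilinear. (The same argument shows one may in fact use any extension of $u$ in $\Gamma(\mathsf D)$, not necessarily a star section; I will use this for the Jacobi identity.)

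It remains to check the axioms. Skew-symmetry of $[\cdot\,,\cdot]_U$ is inherited from $\Gamma(\mathsf D)$. For the anchor, $\pr_{TM}(\lb d,d'\rb\an M)=[X_d,X_{d'}]\an M=[X_d\an M,X_{d'}\an M]=[\pr_{TM}(u),\pr_{TM}(u')]$, since $X_d,X_{d'}$ are tangent to $M$. For the Leibniz rule, given $f\in C^\infty(M)$ choose $\tilde f\in C^\infty(G)$ with $\tilde f\an M=f$; since $\tg\an M=\id_M$ and $\TT\tg$ is fibrewise linear, $(\tg^*\tilde f)\,d'$ is again a star section, now over $fu'$, and restricting the Courant identity $\lb d,(\tg^*\tilde f)d'\rb=(\tg^*\tilde f)\lb d,d'\rb+(X_d(\tg^*\tilde f))\,d'$ to $M$ yields $[u,fu']_U=f[u,u']_U+(\pr_{TM}(u)f)\,u'$. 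For the Jacobi identity, $\lb d,d'\rb\in\Gamma(\mathsf D)$ is an extension of $[u,u']_U$, so by the remark above the Jacobiator of $[\cdot\,,\cdot]_U$ on $u,u',u''$ is the restriction to $M$ of the Jacobiator of $\lb\cdot\,,\cdot\rb$ on $d,d',d''$, which is $0$. The main obstacle is the pair of restriction-to-$M$ computations in the second and third paragraphs: the content is precisely that passing to units annihilates the ``core'' of $TG\oplus T^*G$, so that $\lb d,d'\rb\an M$ lands in $U$ and is insensitive to the chosen star sections, while one keeps careful track of the embeddings $A\subseteq TG\an M$ and $A^*\subseteq T^*G\an M$; the rest is formal.
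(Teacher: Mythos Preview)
The paper does not contain a proof of this theorem: it is quoted from \cite{Jotz13b}. Your argument is correct and self-contained, needing from that reference only the existence of star sections.

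Two remarks. Your key observation---that well-definedness uses only $e\an M=0$ and $X_{d'}\an M\in\Gamma(TM)$, not the full condition $\TT\tg\circ d'=u'\circ\tg$---is exactly what makes the Jacobi step go through, since $\lb d,d'\rb$ is an extension of $[u,u']_U$ in $\Gamma(\mathsf D)$ but need not be a star section. This is more elementary than what the paper does in the analogous situation (the proof of Proposition~\ref{induced_Dorfman}), where the difference of two star sections over the same $u$ lies in $\Gamma(\mathsf D\cap\ker\TT\tg)$ and is expressed via left-invariant extensions of a frame of $K^\tg$, genuinely using the $\tg$-compatibility. In your Leibniz step the auxiliary extension $\tilde f$ is unnecessary: $\tg^*f$ is already a smooth function on $G$ for $f\in C^\infty(M)$, and $(\tg^*f)\,d'$ is a star section over $fu'$.
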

We shall prove that $(A,U,\iota)$ is a Dirac bialgebroid.  In order to
do this, we describe how an $A$-Manin pair is naturally associated to
a Dirac groupoid $(G\rr M, \mathsf D)$. The Lie algebroid $U$ is the Dirac structure in this
Manin pair.

Set $\mathsf B=: (U\oplus A\oplus T^*M)/K^\tg$, where we see
$K^\tg=(\ker\TT\tg\cap \mathsf D)\an{M}$ as a subbundle of
$U\oplus(A\oplus T^*M)=\mathsf D\an{M}+\ker\TT\s\an{M}$.  We write
sections of $\mathsf B$ as pairs $u\oplus \tau:=u+\tau+K^\tg$,
with $u\in\Gamma(U)$ and $\tau\in\Gamma(A\oplus T^*M)$.  Since
$(\ker\TT\s)^\perp=\ker\TT\tg$ relative to the canonical symmetric
pairing on $TG\oplus T^*G$, we have $\left(U\oplus(A\oplus
  T^*M)\right)^\perp =\left(\ker\TT\s\an{M}+\mathsf
  D\an{M}\right)^\perp =K^\tg$, and so the pairing
$\langle\cdot\,,\cdot\rangle$ on $TG\oplus T^*G$ restricts and
projects to a nondegenerate symmetric pairing on $\mathsf B$:
\begin{equation*}
\begin{split}
  \langle u_1\oplus \tau_1, u_2\oplus \tau_2\rangle_{\mathsf B}
  &=\alpha_2(a_1)+\alpha_1(a_2)+\theta_1(\bar
  X_2+\rho(a_2))+\theta_2(\bar X_1+\rho(a_1)) \\
  &=\langle u_1,\tau_2\rangle+\langle u_2,\tau_1\rangle+\langle
  \tau_1,\tau_2\rangle_d
\end{split}
\end{equation*}
 where $u_i=(\bar X_i,\alpha_i)$
and $\tau_i=(a_i,\theta_i)$.  We then define
$\mathsf b\colon \mathsf B\to TM$ by $\mathsf
b(u\oplus\tau)=\pr_{TM}(u)+\rho\circ\pr_A(\tau)$ and, finally,
$\lb\cdot\,,\cdot\rb_{\mathsf B}\colon \Gamma(\mathsf B)\times
\Gamma(\mathsf B) \to \Gamma(\mathsf B)$ by
\begin{align*}
  \left\lb u_1\oplus \tau_1, u_2\oplus\tau_2\right\rb_{\mathsf B}
  =\left.\left\lb d_1+\tau_1^r, d_2+\tau_2^r\right\rb\right\an{M} +K^\tg
\end{align*}
for all $\tau_1,\tau_2\in\Gamma\left(\ker\TT\s\an{M}\right)$,
$u_1,u_2\in\Gamma(U)$ and star sections $d_i\sim_\tg u_i$ of
$\mathsf D$. Here, $\lb\cdot\,,\cdot\rb$ is the Courant-Dorfman
bracket on sections of $TG\oplus T^*G$.  We have proved in
\cite{Jotz13b} that $(\mathsf B, \mathsf b,
\lb\cdot\,,\cdot\rb_{\mathsf B}, \langle\cdot\,,\cdot\rangle_{\mathsf
  B})$ is a Courant algebroid.

\begin{theorem}\label{Manin_pair}
  Let $(G\rr M, \mathsf D)$ be a Dirac groupoid, and let $A$ be the
  Lie algebroid of $G\rr M$. Then the pair $(\mathsf B, U)$ defined as
  above is naturally an $A$-Manin pair.
\end{theorem}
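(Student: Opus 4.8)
The plan is to exhibit the data of Definition~\ref{def_A_Manin} explicitly. For the injective morphism take the inclusion $\iota\colon U=\mathsf D\cap(TM\oplus A^*)\hookrightarrow TM\oplus A^*$, and for the degenerate Courant morphism take
\[\Phi\colon A\oplus T^*M\to\mathsf B,\qquad \Phi(\tau)=0\oplus\tau,\]
the composite of $A\oplus T^*M\hookrightarrow U\oplus A\oplus T^*M$ with the projection onto $\mathsf B$. Recall from \cite{Jotz13b} that $\mathsf B$ is a Courant algebroid. First, $u\mapsto u\oplus 0$ is injective: an element of $K^\tg$ lying in the summand $U$ of $U\oplus A\oplus T^*M$ is a unit $1_u$ with $\TT\tg(1_u)=u=0$. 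It then remains to check (i) that the image of $U$ is a Dirac structure in $\mathsf B$, with induced Lie algebroid the one of Theorem~\ref{lie_algebroid_dual}; (ii) $\mathsf b\an{U}=\pr_{TM}\circ\iota$; (iii) $\Phi$ is a degenerate Courant morphism; and (iv) $\Phi(A\oplus T^*M)+U=\mathsf B$ and $\langle u,\Phi(\tau)\rangle_{\mathsf B}=\langle\iota(u),\tau\rangle$.

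Most of these follow directly from the formulas recalled above for $\mathsf b$, $\langle\cdot\,,\cdot\rangle_{\mathsf B}$ and $\lb\cdot\,,\cdot\rb_{\mathsf B}$. Every element of $\mathsf B$ is $u\oplus\tau=(u\oplus 0)+\Phi(\tau)$, and setting $\tau_1=0$, $u_2=0$ in the pairing formula gives $\langle u\oplus 0,\,0\oplus\tau\rangle_{\mathsf B}=\langle u,\tau\rangle=\langle\iota(u),\tau\rangle$; this is (iv), and $\mathsf b(u\oplus 0)=\pr_{TM}(u)$ is (ii). For (iii): the anchor condition is $\mathsf b(0\oplus\tau)=\rho(\pr_A\tau)$, the anchor of the degenerate Courant algebroid $A\oplus T^*M$ on $\tau$; the pairing condition is $\langle 0\oplus\tau_1,\,0\oplus\tau_2\rangle_{\mathsf B}=\langle\tau_1,\tau_2\rangle_d$; and the bracket condition follows by choosing the zero section as star section over $0\in\Gamma(U)$, so that $\lb 0\oplus\tau_1,0\oplus\tau_2\rb_{\mathsf B}=\lb\tau_1^r,\tau_2^r\rb\an{M}+K^\tg$, and then using that the Courant--Dorfman bracket of right-invariant sections restricts to $[\cdot\,,\cdot]_d$ (Example~\ref{deg_cou_algebroid}), whence $\lb 0\oplus\tau_1,0\oplus\tau_2\rb_{\mathsf B}=[\tau_1,\tau_2]_d+K^\tg=\Phi([\tau_1,\tau_2]_d)$. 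For (i): the image of $U$ is isotropic since $\langle u_1\oplus 0,u_2\oplus 0\rangle_{\mathsf B}=0$; it is closed under $\lb\cdot\,,\cdot\rb_{\mathsf B}$ and the bracket coincides with $[\cdot\,,\cdot]_U$ because $\lb u_1\oplus 0,u_2\oplus 0\rb_{\mathsf B}=\lb d_1,d_2\rb\an{M}+K^\tg=[u_1,u_2]_U\oplus 0$ for star sections $d_i\sim_\tg u_i$, using that $\lb d_1,d_2\rb\an{M}\in\Gamma(U)$ by Theorem~\ref{lie_algebroid_dual}; and the anchor is again $\mathsf b(u\oplus 0)=\pr_{TM}(u)$.

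The only point that genuinely requires an argument is that the image of $U$ is \emph{maximal} isotropic. From the pairing formula, $u'\oplus\tau'\in U^{\perp}$ iff $\langle u,\tau'\rangle=0$ for all $u\in\Gamma(U)$, i.e.\ iff $\tau'\in U^\circ=K$. On the other hand, unwinding the description of $K^\tg$ as a subbundle of $U\oplus A\oplus T^*M=\mathsf D\an{M}+\ker\TT\s\an{M}$ shows $K^\tg=\{(-(\rho,\rho^t)(\eta))\oplus\eta\ :\ \eta\in K\}$: an element of $\mathsf D\an{M}$ annihilated by $\TT\tg$ has the form $-1_{(\rho,\rho^t)(\eta)}+\eta$ with $(\rho,\rho^t)(\eta)\in U$, forcing $\eta\in\mathsf D\cap(A\oplus T^*M)=K$, and conversely every such element lies in $K^\tg$ since $1_{(\rho,\rho^t)(\eta)}\in\mathsf D$ (as $U$ is the unit space of $\mathsf D$ and $(\rho,\rho^t)(K)\subseteq U$) and $\TT\tg$ restricts to $(\rho,\rho^t)$ on $\ker\TT\s\an{M}$. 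Hence $0\oplus\eta=(\rho,\rho^t)(\eta)\oplus 0$ in $\mathsf B$ for every $\eta\in K$, so every $u'\oplus\tau'$ with $\tau'\in K$ equals $(u'+(\rho,\rho^t)(\tau'))\oplus 0$ and lies in the image of $U$; thus $U^{\perp}$ is exactly the image of $U$, which is therefore a Dirac structure. The main obstacle is then not analytic but organisational: one has to keep the identifications $\ker\TT\s\an{M}\cong A\oplus T^*M$, the embedding $K^\tg\subseteq U\oplus A\oplus T^*M$, and $\TT\tg\an{\ker\TT\s\an{M}}=(\rho,\rho^t)$ consistent, so that the computation of $K^\tg$---and hence of $U^{\perp}$---and the bracket compatibility of $\Phi$ are carried out without sign or identification errors.
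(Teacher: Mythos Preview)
Your proof is correct and follows the same overall route as the paper: the same inclusion $\iota$, the same map $\Phi(\tau)=0\oplus\tau$, and the same verifications for anchor, pairing, bracket and surjectivity. The one substantive difference is how you establish that $U$ is \emph{maximal} isotropic in $\mathsf B$. The paper does this by a dimension count: since $\rk(K^\tg)=\rk(K)=\rk(A\oplus T^*M)-\rk(U)$ (as $K=U^\circ$), one has $\rk(\mathsf B)=\rk(U)+\rk(A\oplus T^*M)-\rk(K^\tg)=2\rk(U)$, and isotropy then forces $U=U^\perp$. You instead compute $U^\perp$ directly by identifying $K^\tg$ with $\{(-(\rho,\rho^t)(\eta),\eta):\eta\in K\}$ inside $U\oplus(A\oplus T^*M)$ and showing that every $u'\oplus\tau'$ with $\tau'\in K$ already lies in the image of $U$. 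Your argument is a bit more explicit and recovers the concrete description of $K^\tg$ as a by-product; the paper's argument is shorter. One small expository point: when you write ``with $(\rho,\rho^t)(\eta)\in U$, forcing $\eta\in\mathsf D\cap(A\oplus T^*M)=K$'', the implication uses that $\xi\in\mathsf D_m$ and $(\rho,\rho^t)(\eta)\in U_m\subseteq\mathsf D_m$ together give $\eta=\xi+(\rho,\rho^t)(\eta)\in\mathsf D_m$; making that step explicit would remove any ambiguity.
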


\begin{proof}
  By construction, $U$ is a subbundle of $TM\oplus A^*$. We first show
  that $U$ is a Dirac structure in $\mathsf B$, when we identify $U$ with
  its image under the injective map $U\to \mathsf B$, $u\mapsto
  u\oplus 0$.  A dimension count shows that
  $2\operatorname{rank}(U)=\operatorname{rank}(\mathsf B)$, and $U$ is
  obviously isotropic relative to
  $\langle\cdot\,,\cdot\rangle_{\mathsf B}$.  By
  Theorem~\ref{lie_algebroid_dual}, $\Gamma(U)$ is closed under the
  bracket on $\Gamma(\mathsf B)$.

  By construction again, we find immediately that the map $\Phi\colon
  A\oplus T^*M\to \mathsf B$, $\tau\mapsto 0\oplus \tau$ is a
  morphism of (degenerate) Courant algebroids: it is easy to check
  that $\lb 0\oplus \tau_1, 0\oplus\tau_2\rb_{\mathsf
    B}=0\oplus[\tau_1,\tau_2]_d$, $\langle 0\oplus
  \tau_1, 0\oplus\tau_2\rangle_{\mathsf B}=\langle \tau_1,
  \tau_2\rangle_d$ and $\mathsf
  b(0\oplus\tau)=\rho\circ\pr_A(\tau)$.  The sum
  $\iota_U+\Phi\colon U\oplus(A\oplus T^*M)\to \mathsf B$,
  $(u,\tau)\mapsto u\oplus \tau$ is surjective, and 
  we have $\langle
  u,\Phi(\tau)\rangle=\langle u\oplus 0, 0\oplus
  \tau\rangle_{\mathsf B}=\langle u,\tau\rangle$.
\end{proof}

\begin{corollary}\label{cor_Dirac_bi}
  The triple $(A,U,\iota)$, with $U$ endowed with the Lie algebroid
  structure defined in Theorem~\ref{lie_algebroid_dual}, is a Dirac
  bialgebroid.
\end{corollary}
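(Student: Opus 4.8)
The plan is to obtain this as an essentially formal consequence of Theorem~\ref{Manin_pair} together with clause (2) of Definition~\ref{def_A_Manin}. Theorem~\ref{Manin_pair} already provides an $A$-Manin pair $(\mathsf B, U)$ in which the injective morphism is the inclusion $\iota\colon U\hookrightarrow TM\oplus A^*$ and the degenerate Courant morphism is $\Phi\colon A\oplus T^*M\to\mathsf B$, $\tau\mapsto 0\oplus\tau$. So I would simply have to match this against the definition of a Dirac bialgebroid: $U$ must be a Lie algebroid with an injective vector bundle morphism $\iota\colon U\to TM\oplus A^*$ satisfying $\pr_{TM}\circ\iota=\rho_U$, and it must be the Dirac structure, and $\iota$ the injective morphism, of an $A$-Manin pair. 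The anchor compatibility $\rho_U=\pr_{TM}\circ\iota$ is built into Theorem~\ref{lie_algebroid_dual}, where $\rho_U$ is defined to be $\pr_{TM}\an{U}$, so that costs nothing.

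The one point that actually needs a sentence of verification is that the Lie algebroid structure with which $U$ enters the pair $(\mathsf B, U)$ — that is, the Lie algebroid structure on $U$ as a Dirac structure in $\mathsf B$, with bracket the restriction of $\lb\cdot\,,\cdot\rb_{\mathsf B}$ and anchor $\mathsf b\an{U}$ — is the same one that Theorem~\ref{lie_algebroid_dual} puts on $U$. For the bracket I would read this straight off the definition of $\lb\cdot\,,\cdot\rb_{\mathsf B}$: for $u_1,u_2\in\Gamma(U)$ and star sections $d_i\sim_\tg u_i$ of $\mathsf D$,
\[
\lb u_1\oplus 0, u_2\oplus 0\rb_{\mathsf B}=\lb d_1, d_2\rb\an{M}+K^\tg=[u_1,u_2]_U\oplus 0,
\]
the second equality being exactly the definition of $[\cdot\,,\cdot]_U$ in Theorem~\ref{lie_algebroid_dual}. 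For the anchor, $\mathsf b(u\oplus 0)=\pr_{TM}(u)=\rho_U(u)$ by definition of $\mathsf b$. Hence the two Lie algebroid structures on $U$ coincide.

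With that identification in hand, the conclusion is just unwinding Definition~\ref{def_A_Manin}(2): $U$ with the bracket of Theorem~\ref{lie_algebroid_dual} is a Lie algebroid with anchor $\rho_U=\pr_{TM}\an{U}$, the inclusion $\iota$ is injective and anchor-compatible, and $(U,\iota)$ is the Dirac structure and injective morphism of the $A$-Manin pair $(\mathsf B, U)$ supplied by Theorem~\ref{Manin_pair}; therefore $(A,U,\iota)$ is a Dirac bialgebroid. I do not expect a real obstacle here — the whole corollary is a repackaging of Theorem~\ref{Manin_pair} — and the only place where one could slip is the compatibility of the two Lie algebroid structures on $U$ addressed above, so that is the step I would make sure to state explicitly rather than leave implicit.
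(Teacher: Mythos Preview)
Your proposal is correct and matches the paper's approach exactly: the paper states the corollary with no proof, treating it as an immediate consequence of Theorem~\ref{Manin_pair} and Definition~\ref{def_A_Manin}(2). Your explicit verification that the Lie algebroid structure on $U$ inherited as a Dirac structure in $\mathsf B$ agrees with that of Theorem~\ref{lie_algebroid_dual} is more careful than the paper, which leaves this implicit, but it is the right thing to spell out and your argument for it is sound.
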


Our goal is to show that this defines a bijection
between source simply connected Dirac groupoids and integrable Dirac bialgebroids.

The following proposition shows that the Lie derivative
$\ldr{}^{\mathsf D}$ is right-invariant and projects to the Lie
derivative $\ldr{}^U$. Note that this is in fact the key feature of a
multiplicative Dirac structure, and that it resembles the right-invariance of the
Bott connection corresponding to a multiplicative involutive
distribution on a Lie groupoid \cite{JoOr14}.
\begin{proposition}\label{induced_Dorfman}
  Let $(G\rr M, \mathsf D)$ be a Dirac groupoid.  
\begin{enumerate}
\item The Dorfman connection $\ldr{}^{\mathsf D}=\Delta^{\mathsf
    D}\colon \Gamma(\mathsf D)\times \Gamma(TG\oplus T^*G/\mathsf
  D)\to \Gamma(TG\oplus T^*G/\mathsf D)$ induces as follows a Dorfman
  connection $\Delta\colon \Gamma(U)\times \Gamma(A\oplus T^*M/K)\to
  \Gamma(A\oplus T^*M/K)$.  For a star section $d\sim_\tg u$ of
  $\mathsf D$ and a right-invariant section $\tau^r$ of $TG\oplus
  T^*G$, we have
\[\Delta^{\mathsf D}_d\overline{\tau^r}=(\Delta_u\bar \tau)^r.
\]
\item Note that $A\oplus T^*M/K\simeq U^*$ and the Dorfman connection
  $\Delta$ is exactly the Dorfman connection defined by $U$ in
  $\mathsf B$, or in other words the Lie derivative $\ldr{}^U$ of $\Gamma(U^*)$
  by sections of $U$.
\end{enumerate}
\end{proposition}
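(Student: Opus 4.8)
The plan is to verify that the formula $\Delta^{\mathsf D}_d\overline{\tau^r}=(\Delta_u\bar\tau)^r$ is well-defined, and then to identify $\Delta$ with the Lie derivative $\ldr{}^U$. For part (1), the first step is to recall that right-invariant sections $\tau^r$ of $TG\oplus T^*G$, for $\tau\in\Gamma(A\oplus T^*M)$, span $\Gamma(\ker\TT\s)$ over $C^\infty(G)$, and that $\ker\TT\s\an{M}$, intersected appropriately, projects onto $A\oplus T^*M$ with kernel $K=\mathsf D\cap(A\oplus T^*M)=U^\circ$; hence $TG\oplus T^*G/\mathsf D$ restricted to $M$ is identified with $(A\oplus T^*M)/K\oplus(\text{something})$, and more precisely the right-invariant sections $\overline{\tau^r}$ of the quotient bundle descend to $\bar\tau\in\Gamma((A\oplus T^*M)/K)$. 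I would then compute $\Delta^{\mathsf D}_d\overline{\tau^r}=\overline{\lb d,\tau^r\rb}$ using the Courant--Dorfman bracket on $TG\oplus T^*G$, writing $d=\tilde d+\text{(star-section correction)}$ and invoking Theorem~\ref{lie_der_of_xi_section}: the bracket $\lb d,\tau^r\rb$ is $\ldr{d}\tau^r$ up to a term $\ip{\tau^r}\dr(\cdot)$ which, because $\tau^r$ is right-invariant and $d$ is a star section, behaves controllably. The key computational input is that $\ldr{a^r}\tau^r=(\ldr{a}\tau)^r$ for right-invariant sections (the bracket of right-invariant sections on $TG\oplus T^*G$ is right-invariant), combined with the star-section expansion $\ldr{a^r}d=\mathcal L_ad+(\tau_{d,a})^r$ from Theorem~\ref{lie_der_of_xi_section}. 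This forces $\lb d,\tau^r\rb$ to be of the form (right-invariant section) plus (a section vanishing after projection modulo $\mathsf D$), establishing that $\Delta^{\mathsf D}_d\overline{\tau^r}$ is itself right-invariant, hence equal to $(\Delta_u\bar\tau)^r$ for a uniquely determined $\Delta_u\bar\tau\in\Gamma((A\oplus T^*M)/K)$. I would then check the Dorfman-connection axioms (1)--(3) of Definition~\ref{the_def} for $\Delta$: these descend directly from the corresponding identities for $\Delta^{\mathsf D}$ together with the compatibility of the $C^\infty$-module structures under $f\mapsto q_G^*f$ on right-invariant sections and the fact that the anchor $\rho_U=\pr_{TM}$ is the restriction of $\mathsf b$.

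For part (2), the identification $(A\oplus T^*M)/K\simeq U^*$ follows from $K=U^\circ$ and the nondegenerate pairing between $TM\oplus A^*$ and $A\oplus T^*M$; under this identification the Bott--Dorfman connection $\Delta^{\mathsf D}$ associated to the Dirac structure $\mathsf D\subseteq TG\oplus T^*G$ is, by the Example on Bott--Dorfman connections, exactly $\ldr{}^{\mathsf D}$, the Lie derivative dual to the Lie algebroid structure on $\mathsf D$. Restricting and projecting to $M$ and using Theorem~\ref{lie_algebroid_dual}, which says the bracket $[\cdot\,,\cdot]_U$ is obtained from $\lb\cdot\,,\cdot\rb$ by evaluating star sections at units, I would conclude that $\Delta$ is dual to $[\cdot\,,\cdot]_U$, i.e.\ $\Delta=\ldr{}^U$. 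Finally, to see that $\Delta$ coincides with the Dorfman connection defined by $U$ inside $\mathsf B$, I recall from Section~\ref{manin} that $\mathsf B=(U\oplus A\oplus T^*M)/K^\tg$ with the bracket induced by $\lb d_1+\tau_1^r,d_2+\tau_2^r\rb\an{M}$; the Bott--Dorfman connection of the Dirac structure $U\subseteq\mathsf B$ sends $u\oplus\overline{0\oplus\tau}$ to $\overline{\lb u\oplus 0,0\oplus\tau\rb_{\mathsf B}}=\overline{0\oplus\Delta_u\tau}$, which matches the formula in part (1) after the same right-invariance argument.

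The main obstacle I anticipate is the well-definedness argument in part (1): one must carefully track that $\lb d,\tau^r\rb\an{M}+K$ does not depend on the choice of star section $d$ representing $u$, nor on the choice of $\tau$ representing $\bar\tau$ modulo $K$. Independence of $d$ uses that two star sections $d,d'$ with $d\sim_\tg u$, $d'\sim_\tg u$ differ by a section of $\mathsf D$ vanishing on $M$, so their bracket with $\tau^r$ differs by something in $\Gamma(\mathsf D)$ along $M$; independence of $\tau$ modulo $K$ uses that $K=U^\circ$ is sent into $U\subseteq\mathsf B$ by the relevant maps (as computed in Section~\ref{def_Manin_IM}), so the correction lands in $\mathsf D$ after projection. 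Making these two independence checks precise—essentially a diagram chase through the identifications $\mathsf D\an{M}+\ker\TT\s\an{M}=U\oplus(A\oplus T^*M)$ and $K^\tg=(\ker\TT\tg\cap\mathsf D)\an{M}$—is the technical heart of the proof; everything else is a routine transfer of Dorfman-connection axioms along a quotient map.
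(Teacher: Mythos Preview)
Your outline follows the paper's strategy, and part~(2) is exactly the paper's argument: a direct pairing computation against an arbitrary $u'\in\Gamma(U)$ using Theorem~\ref{lie_algebroid_dual}. Two steps in part~(1), however, need to be tightened.

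First, to bring Theorem~\ref{lie_der_of_xi_section} into play you must flip the bracket using the Courant--Dorfman antisymmetry: $\lb d,\tau^r\rb=-\lb\tau^r,d\rb+(0,\dr\langle d,\tau^r\rangle)$. Only then does $\ldr{a^r}d=\mathcal L_ad+(\tau_{d,a})^r$ apply, since that theorem describes how the \emph{right-invariant} $a^r$ acts on the star section $d$, not the other way around. The identity $\ldr{a^r}{\tau'}^r=(\ldr{a}\tau')^r$ you cite concerns two right-invariant sections and is not what is used here; attempting to compute $\lb d,\tau^r\rb$ directly via ``$\ldr{d}\tau^r$'' is hard because $\pr_{TG}(d)$ is not right-invariant. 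After the flip the paper obtains $\lb d,\tau^r\rb=-\mathcal L_ad-(\tau_{d,a}-(0,\ip{\bar X}\dr\theta+\dr\langle u,\tau\rangle))^r$, so $\Delta_u\bar\tau=\overline{-\tau_{d,a}+(0,\ip{\bar X}\dr\theta+\dr\langle u,\tau\rangle)}$.

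Second, for independence of the choice of $d$, noting only that $d-d'$ lies in $\Gamma(\mathsf D)$ and vanishes on $M$ is not sufficient to conclude $\lb d-d',\tau^r\rb\in\Gamma(\mathsf D)$, since $\tau^r\notin\Gamma(\mathsf D)$. The missing observation is that $d-d'\in\Gamma(\mathsf D\cap\ker\TT\tg)$ (both $d,d'$ being star sections over the same $u$), so locally $d-d'=\sum_i f_i\,\tau_i^l$ with $\tau_i\in\Gamma(K^\tg)$ and $f_i\an{M}=0$. Then $\lb d-d',\tau^r\rb=-\sum_i a^r(f_i)\,\tau_i^l\in\Gamma(\mathsf D)$, using that left- and right-invariant sections bracket to zero and that $\langle\tau_i^l,\tau^r\rangle=0$ because $\ker\TT\tg=(\ker\TT\s)^\perp$.
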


\begin{proof}[Proof of Proposition~\ref{induced_Dorfman}]
  For the first claim write $\tau=(a,\theta)$, with $a\in\Gamma(A)$
  and $\theta\in\Omega^1(M)$, and $\pr_{TG}(d)=X$, $\pr_{TM}(u)=\bar
  X$.  Using Theorem~\ref{lie_der_of_xi_section}, we get the equality
\begin{align*}
  \lb d, \tau^r\rb &=-\lb\tau^r, d\rb+(0,\dr \langle
  d,\tau^r\rangle)
  =-\ldr{a^r}d+(0,\ip{X}\dr\tg^*\theta)+(0,\tg^*\dr \langle u,\tau\rangle)\\
&=-\mathcal L_ad-(\tau_{d,a}-(0,\ip{\bar X}\dr\theta+\dr
\langle u,\tau\rangle))^r,
\end{align*}
which implies 
\[\overline{\lb d, \tau^r\rb}=\overline{-(\tau_{d,a}-(0,\ip{\bar X}\dr\theta+\dr
\langle u,\tau\rangle))^r}
\]
in $TG\oplus T^*G/\mathsf D$. This is $0$ if and only if
$\tau_{d,a}-(0,\ip{\bar X}\dr\theta+\dr \langle
u,\tau\rangle)\in\Gamma(K)$.

We show that this does not depend on the choice of $d$ over the section
$u\in\Gamma(U)$.  If $d'\sim_\tg u$ is an other star section of
$\mathsf D$ over $u$, then we have $d-d'\in\Gamma(\mathsf D\cap
\ker\TT\tg)$ and $(d-d')\an{M}=0$. Choose a (local) basis of sections
$\tau_1,\ldots,\tau_r$ of $K^\tg$.  Then $d-d'$ can be written as
$d-d'=\sum_{i=1}^rf_i \tau_i^l$ with $f_1,\ldots,f_r\in C^\infty(G)$
vanishing on $M$. Using this, it is easy to check that $\lb d-d',
\tau^r\rb =-\sum_{i=1}^ra^r(f_i)\tau_i^l\in\Gamma(\mathsf D)$ and so that
$\overline{\lb d-d',\tau^r\rb}=0$.
Set $\Delta_u\bar\tau=\overline{-\tau_{d,a}+(0,\ip{\bar X}\dr\theta+\dr
\langle u,\tau\rangle)}$. Checking that $\Delta$ is a Dorfman
connection is straightforward.

For the second claim let $u'$ be a section of $U$ and $d'$ a star section of $\mathsf
  D$ over $u'$.  We compute
\begin{align*}\langle \Delta_u\bar\tau,u'\rangle
  &=\langle \Delta^{\mathsf D}_d\overline{\tau^r}, d'\rangle\an{M}
  =\langle \lb d,\tau^r\rb,d\rq{}\rangle\an{M}\\
  &=\left( \pr_{TG}(d)\langle \tau^r, d'\rangle -\langle \tau^r, \lb d,d'\rb\rangle\right)\an{M}\\
  &=\pr_{TM}(u)\langle\tau,u'\rangle-\langle\tau,
  [u,u']_U\rangle=\langle\ldr{u}\bar\tau,u'\rangle.\qedhere
\end{align*}
\end{proof}

\section{The Dirac bialgebroid associated to a Dirac algebroid}\label{dir_alg}
Let $A$ be a Lie algebroid. Recall from Example~\ref{example_TAT*A}
that the space $TA\oplus T^*A$ has the structure of a double vector
bundle with sides $TM\oplus A^*$ and $A$ and with core $A\oplus T^*M$.
The vector bundle $TA\oplus T^*A\to A$ is equipped with the standard
Courant algebroid structure, and the vector bundle $TA\oplus T^*A\to
TM\oplus A^*$ is equipped with a Lie algebroid structure, which was
described in Example~\ref{example_TAT*A}.  These two structures are
compatible and $TA\oplus T^*A$ is an LA-Courant algebroid. (LA-Courant
algebroids are defined in \cite{Li-Bland12}. An alternative, perhaps
more handy definition in terms of representations up to homotopy and
Dorfman $2$-representations is given in \cite{Jotz15}.)
\begin{definition}
  A Dirac algebroid is a Lie algebroid $A$ with a Dirac structure
  $\mathsf D_A\subseteq TA\oplus T^*A$ that is also a sub-algebroid of
  $TA\oplus T^*A\to TM\oplus A^*$ over a subbundle
  $U\subseteq TM\oplus A^*$.
\end{definition}
\noindent We also say that $\mathsf D_A$ is an LA-Dirac structure on
$A$. It is in fact a sub-double Lie algebroid of the LA-Courant
algebroid $(TA\oplus T^*A;A, TM\oplus A^*;M)$.

This section shows how LA-Dirac structures are equivalent to $A$-Manin
pairs.

\subsection{Decompositions of LA-Dirac structures}\label{sum_Jotz13a}
Let $A$ be a Lie algebroid.  Recall that in \S\ref{example_TAT*A} we
have described linear splittings of the VB-algebroid
$(TA\oplus T^*A\to TM\oplus A^*, A\to M)$.  Now we consider sub-double
vector bundles
\begin{equation*}
\begin{xy}
\xymatrix{
D\ar[d]\ar[r]&U\ar[d]\\
A\ar[r]&M
}\end{xy}
\qquad \text{ of } \qquad
\begin{xy}
\xymatrix{
TA\oplus T^*A\ar[d]\ar[r]&TM\oplus A^*\ar[d]\\
A\ar[r]&M
}\end{xy}.
\end{equation*}
We denote the core of $D$ by $K\subseteq A\oplus T^*M$.  There exists
a Dorfman connection $\Delta$ such that $D\to A$ is spanned by the
sections $k^\dagger$, for all $k\in\Gamma(K)$ and
$\sigma^\Delta_{TM\oplus A^*}(u)$ for all $u\in\Gamma(U)$ (see
\cite{Jotz13a}).  The Dorfman connection $\Delta$ is then said to be
\textbf{adapted} to $D$ and the double vector subbundle $D$ is
completely determined by the triple $(U,K,\Delta)$.  Conversely we
write $D_{(U,K,\Delta)}$ for the double vector subbundle defined in
this manner by a Dorfman connection $\Delta$ and two subbundles
$U\subseteq TM\oplus A^*$, $K\subseteq A\oplus T^*M$.  The set of
sections of $D\to U$ is spanned as a $C^\infty(U)$-module by the
sections $\sigma_A^\Delta(a)\an{U}$ and $k^\dagger\an{U}$ for all
$a\in\Gamma(A)$ and $k\in\Gamma(K)$. We have
$D_{(U,K,\Delta)}=D_{(U,K,\Delta')}$ if and only if
$(\Delta-\Delta')_u\tau\in\Gamma(K)$ for all $u\in\Gamma(U)$ and
$\tau\in\Gamma(A\oplus T^*M)$.

By the results in \cite{Jotz13a}, $D$ is a Dirac structure $\mathsf
D_A$ over $A$, if and only if $K=U^\circ$, any adapted Dorfman
connection $\Delta$ satisfies $\Delta_uk\in\Gamma(K)$ for all
$u\in\Gamma(U)$ and $k\in\Gamma(K)$ and the induced quotient Dorfman
connection $\bar \Delta\colon \Gamma(U)\times\Gamma(A\oplus T^*M/K)\to
\Gamma(A\oplus T^*M/K)$, $\bar\Delta_u\bar
\tau=\overline{\Delta_u\tau}$ is flat. That is, since $A\oplus
T^*M/U^\circ\simeq U^*$, the Dorfman connection $\bar\Delta$ is in
fact dual to a Lie algebroid structure on $U$ with anchor $\pr_{TM}$.
The Lie algebroid bracket is the restriction to
$\Gamma(U)\times\Gamma(U)$ of the dull bracket
$\lb\cdot\,,\cdot\rb_\Delta$ that is dual to $\Delta$. The quotient
Dorfman connection and so also the obtained Lie algebroid structure on
$U$ do not depend on the choice of the adapted Dorfman connection to
$\mathsf D_A$.

Next, a double vector subbundle as above defines a VB-subalgebroid
$(D\to A, U\to M)$ of $(TA\oplus T^*A\to TM\oplus A^*, A\to M)$ if
and only if $(\rho,\rho^t)(K)\subseteq U$, $\nabla^{\rm
  bas}_ak\in\Gamma(K)$ for all $a\in\Gamma(A)$ and $k\in\Gamma(K)$,
$\nabla^{\rm bas}_au\in\Gamma(U)$ for all $u\in\Gamma(U)$ and
$R_\Delta^{\rm bas}(a,b)u\in\Gamma(K)$ for all $a,b\in\Gamma(A)$ and
$u\in\Gamma(U)$.  We get the following theorem.

\begin{theorem}\cite{Jotz13a}\label{morphic_Dirac_triples}
Consider a Lie algebroid $A$ and a    Dorfman connection
\[\Delta\colon \Gamma(TM\oplus A^*)\times\Gamma(A\oplus T^*M)\to \Gamma(A\oplus T^*M).
\]
Let $U\subseteq TM\oplus A^*$ and $K\subseteq A\oplus T^*M$ be subbundles. Then 
$D_{(U,K,\Delta)}$ is a Dirac structure in $TA\oplus T^*A\to A$ and a 
subalgebroid of $TA\oplus T^*A\to TM\oplus A^*$
over $U$ if and only if: 
\begin{enumerate}
\item $K=U^\circ$
\item $(\rho,\rho^t)(K)\subseteq U$,
\item $(U,\pr_{TM}, \lb\cdot\,,\cdot\rb_\Delta\an{\Gamma(U)\times\Gamma(U)})$ is a Lie algebroid, 
\item $\nabla_a^{\rm bas}u\in\Gamma(U)$ for all $a\in\Gamma(A)$ and $u\in\Gamma(U)$,
\item $R_\Delta^{\rm bas}(a,b)u\in\Gamma(K)$ for all $u\in\Gamma(U)$, $a,b\in\Gamma(A)$.
\end{enumerate}
\end{theorem}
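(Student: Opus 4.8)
The plan is to read the statement off the two characterizations recalled just above, both taken from \cite{Jotz13a}. Write $D=D_{(U,K,\Delta)}$ for an adapted Dorfman connection $\Delta$. On the one hand, $D$ is a Dirac structure in the Courant algebroid $TA\oplus T^*A\to A$ if and only if $K=U^\circ$, $\Delta$ maps $\Gamma(U)\times\Gamma(K)$ into $\Gamma(K)$, and the induced quotient Dorfman connection $\bar\Delta\colon\Gamma(U)\times\Gamma(A\oplus T^*M/K)\to\Gamma(A\oplus T^*M/K)$ on $A\oplus T^*M/K\cong U^*$ is flat. On the other hand, $D$ is a subalgebroid of $TA\oplus T^*A\to TM\oplus A^*$ over $U$ if and only if $(\rho,\rho^t)(K)\subseteq U$, the basic connection $\nabla^{\rm bas}$ preserves both $K$ and $U$, and $R_\Delta^{\rm bas}(a,b)$ sends $\Gamma(U)$ into $\Gamma(K)$ for all $a,b\in\Gamma(A)$. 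So the theorem follows once I check two things: first, that under $K=U^\circ$ the conjunction ``$\Delta$ preserves $K$ and $\bar\Delta$ is flat'' is equivalent to condition (3); and second, that in the presence of (1)--(5) the remaining requirement $\nabla^{\rm bas}_a\Gamma(K)\subseteq\Gamma(K)$ is automatic.

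The first equivalence is a straightforward translation. Since $\lb\cdot\,,\cdot\rb_\Delta$ is the dull bracket dual to $\Delta$, for $u_1,u_2\in\Gamma(U)=\Gamma(K^\circ)$ and $k\in\Gamma(K)$ one has $\langle\lb u_1,u_2\rb_\Delta,k\rangle=\pr_{TM}(u_1)\langle u_2,k\rangle-\langle u_2,\Delta_{u_1}k\rangle=-\langle u_2,\Delta_{u_1}k\rangle$; hence $\lb\cdot\,,\cdot\rb_\Delta$ restricts to a bracket $\Gamma(U)\times\Gamma(U)\to\Gamma(U)$ precisely when $\Delta$ preserves $K$. Identifying $A\oplus T^*M/K$ with $U^*$ by the pairing, this restriction is the dull bracket dual to $\bar\Delta$, and so it is skew-symmetric and satisfies the Jacobi identity -- that is, $(U,\pr_{TM},\lb\cdot\,,\cdot\rb_\Delta\an{\Gamma(U)\times\Gamma(U)})$ is a Lie algebroid -- exactly when $\bar\Delta$ is flat. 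This is condition (3), as the excerpt itself notes in \S\ref{sum_Jotz13a}.

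For the second point, which is where the compatibility between the Courant structure $TA\oplus T^*A\to A$ and the Lie algebroid structure $TA\oplus T^*A\to TM\oplus A^*$ enters, fix $a\in\Gamma(A)$, $k=(b,\theta)\in\Gamma(K)$ and an arbitrary $v\in\Gamma(U)$; I must show $\langle v,\nabla^{\rm bas}_a k\rangle=0$, which then gives $\nabla^{\rm bas}_a k\in\Gamma(U^\circ)=\Gamma(K)$. Using $\nabla^{\rm bas}_a k=\Omega_{(\rho,\rho^t)k}a+\ldr{a}k$ for the basic connection on the core $A\oplus T^*M$, the definition \eqref{omega} of $\Omega$, and $(\rho,\rho^t)k\in\Gamma(U)$ by (2), I would rewrite $\langle v,\Delta_{(\rho,\rho^t)k}(a,0)\rangle$ by the duality between $\Delta$ and $\lb\cdot\,,\cdot\rb_\Delta$ in terms of $\lb(\rho,\rho^t)k,v\rb_\Delta$, which lies in $\Gamma(U)$ by (3) (in fact it equals $[(\rho,\rho^t)k,v]_U$, since $\Delta$ is then an extension of $\ldr{}^U$). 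A computation with the Leibniz rule for $\lb\cdot\,,\cdot\rb_\Delta$ and the Lie-derivative identities for $\ldr{a}$ on $\Gamma(A\oplus T^*M)$ should then make all the terms cancel. Thus the subalgebroid conditions collapse to (2), (4), (5) once (1) and (3) hold, and the theorem follows.

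I expect this last cancellation to be the main obstacle: it must be carried out carefully, tracking the pairing between $TM\oplus A^*$ and $A\oplus T^*M$, the dull bracket $\lb\cdot\,,\cdot\rb_\Delta$, the degenerate Courant bracket $[\cdot\,,\cdot]_d$ and the map $(\rho,\rho^t)$ that relates them. Conceptually it should hold because the two structures make $TA\oplus T^*A$ an LA-Courant algebroid, so a sub-double vector bundle that is both Dirac over $A$ and a subalgebroid over $U$ is automatically ``morphic'', which forces the basic connection on the core to respect $K$. Should one wish to sidestep the computation altogether, one can instead quote the subalgebroid criterion of \cite{Jotz13a} in the form that already has $\nabla^{\rm bas}_a\Gamma(K)\subseteq\Gamma(K)$ absorbed into the other conditions, whereupon the statement is immediate from the two characterizations recalled above.
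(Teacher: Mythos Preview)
Your approach is essentially the one the paper takes: the theorem is quoted from \cite{Jotz13a} without a self-contained proof, but \S\ref{sum_Jotz13a} recalls exactly the two characterizations you invoke (Dirac over $A$, respectively VB-subalgebroid over $U$), and the statement is obtained by intersecting them. Your first equivalence, that ``$\Delta$ preserves $K$ and $\bar\Delta$ flat'' is the same as condition~(3), is correct and is precisely what the text of \S\ref{sum_Jotz13a} says.

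The only point where you hesitate---showing that $\nabla^{\rm bas}_a\Gamma(K)\subseteq\Gamma(K)$ is redundant once (1)--(4) hold---is not an obstacle at all: the paper settles it in Remark~\ref{remark_duality} via the identity~\eqref{not_dual}. Setting $\nu=u\in\Gamma(U)$, $\tau=k\in\Gamma(K)$ and $\tau'=(a,\theta')$ there, one gets
\[
\langle\nabla^{\rm bas}_a u,k\rangle+\langle u,\nabla^{\rm bas}_a k\rangle
=\rho(a)\langle u,k\rangle-\bigl\langle\lb u,(\rho,\rho^t)k\rb_\Delta+\lb(\rho,\rho^t)k,u\rb_\Delta,\tau'\bigr\rangle.
\]
The right-hand side vanishes because $\langle u,k\rangle=0$, because $(\rho,\rho^t)k\in\Gamma(U)$ by~(2), and because $\lb\cdot\,,\cdot\rb_\Delta$ is skew-symmetric on $\Gamma(U)$ by~(3). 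By~(4) the first term on the left vanishes too, whence $\nabla^{\rm bas}_a k\in\Gamma(U^\circ)=\Gamma(K)$. So rather than attempting the direct expansion you sketch, just cite \eqref{not_dual}; your proof is then complete and matches the paper's treatment.
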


In the next subsection we use Theorem~\ref{morphic_Dirac_triples} to construct the Manin pair associated to the
Dirac bialgebroid. 
\begin{remark}\label{remark_duality}
The equation
\begin{equation}\label{not_dual}
\langle \nabla_{a'}^{\rm bas}\nu, \tau\rangle
+\langle \nu, \nabla_{a'}^{\rm bas}\tau\rangle
=\rho(a')\langle \nu, \tau\rangle-\langle \lb \nu,
(\rho,\rho^t)\tau\rb_\Delta+ \lb 
(\rho,\rho^t)\tau, \nu\rb_\Delta, \tau'\rangle
\end{equation}
for $\tau'=(a',\theta')\in \Gamma(A\oplus T^*M)$,
$\nu\in\Gamma(TM\oplus A^*)$ and $\tau\in\Gamma(A\oplus T^*M)$,  is easily checked.

Hence the two $A$-connections $\nabla^{\rm bas}$ are not in duality,
  but if $K=U^\circ$ and the restriction of the dull bracket to
  $\Gamma(U)\times\Gamma(U)$ is skew-symmetric, then for
  $a\in\Gamma(A)$, $\nabla^{\rm bas}_ak\in\Gamma(K)$ for all
  $k\in\Gamma(K)$ if and only if $\nabla^{\rm bas}_au\in\Gamma(U)$ for
  all $u\in\Gamma(U)$; see \cite{Jotz13a}. 
\end{remark}

Given the linear splitting given by the Dorfman connection adapted to
  $\mathsf D_A$, the Lie algebroid structure $\mathsf D_A\to A$ is described by
the $2$-term representation up to homotopy defined by the vector
bundle morphism $\pr_A\colon K\to A$, the connections
$(\pr_A\circ\,\Omega)\colon \Gamma(U)\times\Gamma(A)\to\Gamma(A)$ and
$\Delta\colon \Gamma(U)\times\Gamma(K)\to\Gamma(K)$, and the curvature
$R_\Delta\in\Omega^2(U, \Hom(A, K))$.

The Lie algebroid structure of the other side $\mathsf D_A\to U$
is described by the complex $(\rho,\rho^t)\colon K\to U$ with the
basic connections $\nabla^{\rm bas}\colon \Gamma(A)\times\Gamma(U)\to
\Gamma(U)$ and $\nabla^{\rm bas}\colon \Gamma(A)\times\Gamma(K)\to
\Gamma(K)$
and the basic curvature $R_\Delta^{\rm bas}\in\Omega^2(A,\Hom(U,K))$.

\begin{remark}
A long but straightforward computation shows that these two $2$-term
representations up to homotopy form a matched pair \cite{GrJoMaMe14}.
One of the seven equations defining the matched pair is exactly
\eqref{bialgebroid1} in Lemma~\ref{difficult_computation}, which is
also the key to the most complicated equation. \eqref{bialgebroid2} is
a useful tool to check that equation as well. Corollary
\ref{eq_for_morphism} is another one, and most of the remaining
equations follow immediately from the definition of the involved
objects.

As a consequence, $(\mathsf
D_A, U, A, M)$ is a double Lie algebroid \cite{GrJoMaMe14}, and the
core $K$ has an induced Lie algebroid structure, which is given by the
restriction of $[\cdot\,,\cdot]_d$ to $\Gamma(K)$, with Lie algebroid
morphisms to $A$ and $U$. To see this, use
\cite[Remark 3.5]{GrJoMaMe14} and Lemmas \ref{basic_like_eq} and
\ref{eq_for_morphism} below.
\end{remark}

\subsection{The $A$-Manin pair associated to an LA-Dirac structure}\label{the_courant_alg}
Consider a triple $(U,U^\circ,\Delta)$ as in Theorem
\ref{morphic_Dirac_triples} and define the vector bundle $C$ with base $M$ as in \eqref{C_ident}.
We write $u\oplus \tau$ for the class in $C$ of a pair $(u,\tau)\in\Gamma(U\oplus(A\oplus T^*M))$. 
It is easy to check that the pairing 
\begin{equation*}
  \langle\langle u_1\oplus \tau_1, u_2\oplus\tau_2\rangle\rangle_C
:=\langle u_1,\tau_2\rangle+\langle u_2,\tau_1\rangle+\langle\tau_1,(\rho,\rho^t)\tau_2\rangle
\end{equation*}
defines a symmetric fibrewise pairing
$\langle\langle\cdot\,,\cdot\rangle\rangle_C$ on $C$.  It is easy to
see that this pairing $\langle\langle\cdot\,,\cdot\rangle\rangle_C$ is
nondegenerate, and that the vector bundle $C$ is isomorphic to
$U\oplus U^*$.  Now set \[c\colon C\to TM, \qquad
c(u\oplus\tau)=\pr_{TM}(u)+\rho\circ\pr_{A}(\tau).\]
\begin{theorem}\label{Courant_algebroid}
Let $A$ be a Lie algebroid and consider an LA-Dirac structure $(\mathsf
D_A, U,A,M)$ over $A$.  Choose a Dirac structure
$\Delta\colon\Gamma(TM\oplus A^*)\times\Gamma(A\oplus
T^*M)\to\Gamma(A\oplus T^*M)$ that is adapted to $D_A$.
Then $C$ in \eqref{C_ident} is a Courant algebroid with anchor 
$c$, pairing $\langle\langle\cdot\,,\cdot\rangle\rangle_C$
and bracket $\lb\cdot\,,\cdot\rb\colon \Gamma(C)\times\Gamma(C)\to\Gamma(C)$,
\begin{multline*}
  \lb u_1\oplus\tau_1, u_2\oplus\tau_2\rb\\
  =\,\left(\lb u_1,u_2\rb_\Delta+\nabla_{\tau_1}^{\rm
      bas}u_2-\nabla_{\tau_2}^{\rm bas}u_1\right)\oplus\left(
    [\tau_1,\tau_2]_{D}+\Delta_{u_1}\tau_2-\Delta_{u_2}\tau_1 +(0,\dr\langle
    \tau_1,u_2\rangle)\right).
\end{multline*}
The map $\mathcal D=\,c^*\circ \dr\colon C^\infty(M)\to \Gamma(C)$
is given by 
$f\mapsto 0\oplus(0,\dr f)$. The bracket does not depend on the choice
of $\Delta$.
\end{theorem}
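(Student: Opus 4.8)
The plan is to verify the Courant algebroid axioms (1)--(3) of Section~\ref{preliminaries} directly, reducing everything to the generators $u\oplus 0$ ($u\in\Gamma(U)$) and $0\oplus(a,0)$, $0\oplus(0,\theta)$ ($a\in\Gamma(A)$, $\theta\in\Omega^1(M)$) of $\Gamma(C)$, and using the conditions of Theorem~\ref{morphic_Dirac_triples}. Several ingredients are already at hand: $\langle\langle\cdot\,,\cdot\rangle\rangle_C$ is symmetric, nondegenerate and identifies $C$ with $U\oplus U^*$; the anchor $c$ is given; and — once the bracket is known — the formula $\mathcal D f=0\oplus(0,\dr f)$ follows from the definitions of $c$ and of the pairing exactly as in Section~\ref{def_Manin_IM}. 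The real content is therefore: (i) the displayed formula descends to a well-defined map $\Gamma(C)\times\Gamma(C)\to\Gamma(C)$; (ii) it is independent of the adapted Dorfman connection $\Delta$; (iii) axioms (1)--(3) hold. Since the bracket is assembled from $\Delta$, $\nabla^{\rm bas}$ and $[\cdot\,,\cdot]_d$, each of which satisfies an explicit Leibniz rule, it suffices for (iii) to check the axioms on the generators above.

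Parts (i) and (ii) I would treat exactly as around \eqref{bracket_on_C} in Section~\ref{def_Manin_IM}. For (i), by \eqref{C_ident} an identity $u\oplus\tau=u'\oplus\tau'$ in $C$ means $u'=u+(\rho,\rho^t)k$ and $\tau'=\tau-k$ for some $k\in\Gamma(U^\circ)$; one substitutes this into the bracket and checks the outcome is unchanged, using that $(\rho,\rho^t)(U^\circ)\subseteq U$, that $\Delta_uk\in\Gamma(U^\circ)$, that the basic connections $\nabla^{\rm bas}$ preserve $\Gamma(U)$ and $\Gamma(U^\circ)$ and intertwine $(\rho,\rho^t)$, that $[\cdot\,,\cdot]_d$ restricts to the core $\Gamma(U^\circ)$, together with \eqref{not_dual} and the compatibility relations of the matched pair recalled below; all the corrections then collect into an element of $\graf\bigl(-(\rho,\rho^t)\an{U^\circ}\bigr)$. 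For (ii), the difference of two adapted Dorfman connections is $C^\infty(M)$-bilinear with $(\Delta'-\Delta)_u\tau\in\Gamma(U^\circ)$ for $u\in\Gamma(U)$, the induced Lie algebroid structure on $U$ does not change, and the change in $\nabla^{\rm bas}$ is $(\rho,\rho^t)$ of the change in $\Delta$; tracking the terms one finds that the net change of the bracket is a section of $\graf\bigl(-(\rho,\rho^t)\an{U^\circ}\bigr)$, hence zero in $C$.

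For (iii), axioms (2) and (3) reduce, after expanding on the generators and using the Leibniz rules of $\Delta$, $\nabla^{\rm bas}$ and $[\cdot\,,\cdot]_d$, to skew-symmetry of $\lb\cdot\,,\cdot\rb_\Delta$ on $\Gamma(U)$, to the degenerate Courant axioms of Example~\ref{deg_cou_algebroid} for $A\oplus T^*M$, and once more to \eqref{not_dual}; here one uses $\lb u\oplus 0,\,0\oplus\tau\rb=(-\nabla^{\rm bas}_{\pr_A\tau}u)\oplus\Delta_u\tau$, $\lb u_1\oplus 0,\,u_2\oplus 0\rb=\lb u_1,u_2\rb_\Delta\oplus 0$ and $\lb 0\oplus\tau_1,\,0\oplus\tau_2\rb=0\oplus[\tau_1,\tau_2]_d$, just as in Section~\ref{def_Manin_IM}.

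The main work, and the main obstacle, is axiom (1), the Leibniz--Jacobi identity $\lb c_1,\lb c_2,c_3\rb\rb=\lb\lb c_1,c_2\rb,c_3\rb+\lb c_2,\lb c_1,c_3\rb\rb$, which I would split into cases according to the number of arguments lying in $\Gamma(U)$. If all three lie in $\Gamma(U)$ it is the Jacobi identity of the Lie algebroid $(U,\pr_{TM},\lb\cdot\,,\cdot\rb_\Delta\an{\Gamma(U)})$ of Theorem~\ref{morphic_Dirac_triples}(3); if all three are core sections $0\oplus\tau_i$, the bracket only sees $[\cdot\,,\cdot]_d$, so it is the Jacobi identity of the degenerate Courant algebroid $A\oplus T^*M$. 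The two mixed cases are where the LA-Dirac structure is genuinely used: unwinding them produces identities relating $\nabla^{\rm bas}$, $\Delta$, $\lb\cdot\,,\cdot\rb_\Delta$, $[\cdot\,,\cdot]_d$ and the basic curvature $R^{\rm bas}_\Delta$ — precisely the equations of the matched pair formed by the two $2$-term representations up to homotopy of Section~\ref{sum_Jotz13a}. Concretely, these two cases are settled by the identities \eqref{bialgebroid1} and \eqref{bialgebroid2} of Lemma~\ref{difficult_computation}, by Corollary~\ref{eq_for_morphism} below, by the defining identity of $R^{\rm bas}_\Delta$, and by the inclusion $R^{\rm bas}_\Delta(a,b)u\in\Gamma(U^\circ)$ of Theorem~\ref{morphic_Dirac_triples}(5). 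This verifies the axioms on generators; together with the independence of $\Delta$ established in part (ii), it completes the proof.
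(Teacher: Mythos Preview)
Your plan is correct and matches the paper's proof closely: the same auxiliary identities (Lemma~\ref{basic_like_eq}, Lemma~\ref{complicated_eq} and its Corollary~\ref{eq_for_morphism}, and Lemma~\ref{difficult_computation}) carry the weight, and well-definedness is shown exactly as you describe, by checking that bracketing with $(\rho,\rho^t)(-k)\oplus k$ produces $\bigl(-(\rho,\rho^t)(\nabla^{\rm bas}_a k+\Delta_u k)\bigr)\oplus(\nabla^{\rm bas}_a k+\Delta_u k)$.

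Two small points of comparison. First, for axiom~(2) you cite only \eqref{not_dual}; in practice you will also need Lemma~\ref{complicated_eq} (equation~\eqref{complicated_eqq}) and Lemma~\ref{basic_like_eq}, already in the case $e_1=u\oplus 0$, $e_2=0\oplus\tau_1$, $e_3=0\oplus\tau_2$. Second, for the Jacobi identity the paper does not split by generator type but computes the full Jacobiator on general elements $u_i\oplus\tau_i$ and shows that its $U$-component and its $A\oplus T^*M$-component assemble into $(-(\rho,\rho^t)k)\oplus k$ with
\[
k=\bigl[R_\Delta^{\rm bas}(a_1,a_2)u_3-R_\Delta(u_1,u_2)\tau_3\bigr]+\text{c.p.}\in\Gamma(U^\circ),
\]
using \eqref{bialgebroid1} for the $U$-part and \eqref{bialgebroid2} for the core part. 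Your case-by-case decomposition is the same computation organised differently (each section of $C$ is an $\R$-linear sum of the two generator types, and the Jacobiator is $\R$-trilinear), and it leads to the same curvature obstruction landing in $\graf\bigl(-(\rho,\rho^t)\an{U^\circ}\bigr)$.
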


The proof of this theorem can be found in the appendix.

\begin{remark}
\begin{enumerate}
\item It is easy to check that the Courant algebroid structure does
  not depend on the choice of $\Delta$ adapted to $\mathsf D_A$.
\item This construction has some similarities with the one of matched
  pairs
of Courant algebroids in \cite{GrSt12}. It would be interesting to
understand the relation between the two constructions.
\end{enumerate}
\end{remark}

It is easy to see that the Lie algebroid $(U,\pr_{TM}\an{U},\lb\cdot\,,\cdot\rb_\Delta)$  
is a Dirac structure in $C$, and that $(C,U)$ is an $A$-Manin pair over $M$.
Hence, we get the following corollary.
\begin{corollary}
Let $A$ be a Lie algebroid and consider an LA-Dirac structure $(\mathsf
D_A, U,A,M)$ over $A$.
Then the triple $(A,U,\iota)$, with $\iota$ the inclusion of $U$ in $TM\oplus A^*$, is
a Dirac bialgebroid.
\end{corollary}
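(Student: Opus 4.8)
The plan is to read off the Corollary from Theorem~\ref{Courant_algebroid} and Definition~\ref{def_A_Manin}: having built a Courant algebroid $C$ as in \eqref{C_ident} out of the triple $(U,U^\circ,\Delta)$ for any Dorfman connection $\Delta$ adapted to $\mathsf D_A$, it remains only to exhibit the structure maps $\iota,\Phi$ turning $(C,U)$ into an $A$-Manin pair whose Dirac structure is $U$ (carrying the Lie algebroid bracket $\lb\cdot\,,\cdot\rb_\Delta\an{\Gamma(U)\times\Gamma(U)}$ supplied by Theorem~\ref{morphic_Dirac_triples}(3)) and whose injective morphism is the inclusion $\iota\colon U\hookrightarrow TM\oplus A^*$. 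Then Definition~\ref{def_A_Manin}(2) yields that $(A,U,\iota)$ is a Dirac bialgebroid, with no dependence on $\Delta$ since the Courant algebroid of Theorem~\ref{Courant_algebroid} has none. So first I would take $\iota_U\colon U\to C$, $u\mapsto u\oplus 0$, and $\Phi\colon A\oplus T^*M\to C$, $\tau\mapsto 0\oplus\tau$; both are injective because they are induced by the defining surjection of \eqref{C_ident}.

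Next I would check that $U$ is a Dirac structure in $C$. Isotropy is immediate from the pairing of Theorem~\ref{Courant_algebroid}, $\langle\langle u_1\oplus 0,u_2\oplus 0\rangle\rangle_C=0$, and since $C\cong U\oplus U^*$ as noted before that theorem we get $\operatorname{rank}C=2\operatorname{rank}U$, so $U$ is maximal isotropic. Closedness under the bracket and the identification of the induced bracket both come straight from the bracket formula of Theorem~\ref{Courant_algebroid}: for $u_1,u_2\in\Gamma(U)$ all the $\nabla^{\rm bas}$-terms, the $\Delta$-terms and the $(0,\dr\langle\cdot\,,\cdot\rangle)$-term vanish, leaving $\lb u_1\oplus 0,u_2\oplus 0\rb=\lb u_1,u_2\rb_\Delta\oplus 0$, which lies in $\iota_U(\Gamma(U))$ by Theorem~\ref{morphic_Dirac_triples}(3); likewise $c(u\oplus 0)=\pr_{TM}(u)$, so the Lie algebroid structure that $U$ inherits as a Dirac structure in $C$ is exactly $(\pr_{TM}\an{U},\lb\cdot\,,\cdot\rb_\Delta)$, and in particular $\rho_C\an{U}=\pr_{TM}\circ\iota$, as required in Definition~\ref{def_A_Manin}(1)(a).

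Then I would verify the remaining $A$-Manin pair axioms. On the image of $\Phi$ the formulas of Theorem~\ref{Courant_algebroid} reduce to $c(0\oplus\tau)=\rho\circ\pr_A(\tau)$, to $\langle\langle 0\oplus\tau_1,0\oplus\tau_2\rangle\rangle_C=\langle\tau_1,(\rho,\rho^t)\tau_2\rangle=\langle\tau_1,\tau_2\rangle_d$, and to $\lb 0\oplus\tau_1,0\oplus\tau_2\rb=0\oplus[\tau_1,\tau_2]_d$, the degenerate Courant bracket of Example~\ref{deg_cou_algebroid}; hence $\Phi$ is a degenerate Courant morphism from the degenerate Courant algebroid $A\oplus T^*M$ to $C$. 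Surjectivity of $(u,\tau)\mapsto u+\Phi(\tau)$ is the defining surjection of \eqref{C_ident}, so $\Phi(A\oplus T^*M)+U=C$, and $\langle\iota_U(u),\Phi(\tau)\rangle_C=\langle\langle u\oplus 0,0\oplus\tau\rangle\rangle_C=\langle u,\tau\rangle=\langle\iota(u),\tau\rangle$. Thus $(C,U)$ together with the data $\iota,\Phi$ satisfies Definition~\ref{def_A_Manin}(1), it is an $A$-Manin pair, and the Corollary follows. The only genuine work sits in Theorem~\ref{Courant_algebroid}, that $C$ really is a Courant algebroid, which is proved in the appendix; beyond that the single point needing attention is that the bracket $U$ inherits inside $C$ coincides with the given one $\lb\cdot\,,\cdot\rb_\Delta$, and the bracket formula of Theorem~\ref{Courant_algebroid} makes this transparent.
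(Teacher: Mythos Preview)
Your proof is correct and follows exactly the route the paper intends: it states just before the Corollary that ``it is easy to see that the Lie algebroid $(U,\pr_{TM}\an{U},\lb\cdot\,,\cdot\rb_\Delta)$ is a Dirac structure in $C$, and that $(C,U)$ is an $A$-Manin pair over $M$'', and you have simply written out that verification in full (in the same spirit as the proof of Theorem~\ref{Manin_pair}). One small slip: the map $\Phi\colon\tau\mapsto 0\oplus\tau$ need not be injective (its kernel is $\{\tau\in U^\circ\mid(\rho,\rho^t)\tau=0\}$), but Definition~\ref{def_A_Manin} does not require injectivity of $\Phi$, so this does not affect the argument.
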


Now we can prove the following theorem.
\begin{theorem}\label{inf_cor}
  Let $A$ be a Lie algebroid over a manifold $M$. Then the
  construction above defines a one--one correspondence between
  LA-Dirac structures on $A$ and equivalence classes of Dirac bialgebroids on $A$.
\end{theorem}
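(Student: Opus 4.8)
The plan is to assemble a bijection out of the three pieces the section has already built. First, recall the correspondence $(U,U^\circ,\Delta)\leftrightarrow D_{(U,U^\circ,\Delta)}$ from \S\ref{sum_Jotz13a}: Theorem~\ref{morphic_Dirac_triples} characterises exactly which such triples produce an LA-Dirac structure $\mathsf D_A$ over $U$, and the discussion there records that $D_{(U,U^\circ,\Delta)}=D_{(U,U^\circ,\Delta')}$ iff $\Delta$ and $\Delta'$ agree modulo $K=U^\circ$, and that the induced quotient Dorfman connection $\bar\Delta$ — equivalently the Lie algebroid structure $(U,\pr_{TM},\lb\cdot\,,\cdot\rb_\Delta\an{\Gamma(U)\times\Gamma(U)})$ — is independent of the choice of adapted $\Delta$. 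So an LA-Dirac structure on $A$ is the same thing as a subbundle $U\subseteq TM\oplus A^*$ carrying a Lie algebroid structure with anchor $\pr_{TM}$ that arises as $\lb\cdot\,,\cdot\rb_\Delta$ for some Dorfman connection $\Delta$ satisfying (1)--(5) of Theorem~\ref{morphic_Dirac_triples}. (One should note that condition (1), $K=U^\circ$, is forced, and (3) packages the Lie algebroid axioms; the remaining conditions are compatibility with the $A$-side.)

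Next, feed such a $\Delta$ into Theorem~\ref{Courant_algebroid}: it produces a Courant algebroid $C$ as in \eqref{C_ident} with the stated anchor, pairing and bracket, independent of the choice of $\Delta$ adapted to $\mathsf D_A$. The corollary preceding the statement observes that $U$ (with bracket $\lb\cdot\,,\cdot\rb_\Delta$ and anchor $\pr_{TM}$) sits inside $C$ as a Dirac structure via $u\mapsto u\oplus 0$, and that $\iota\colon U\hookrightarrow TM\oplus A^*$ together with $\Phi\colon A\oplus T^*M\to C$, $\tau\mapsto 0\oplus\tau$, make $(C,U)$ an $A$-Manin pair and $(A,U,\iota)$ a Dirac bialgebroid. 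So the construction LA-Dirac $\mapsto$ Dirac bialgebroid is well-defined on the level of objects, and since the Lie algebroid $\iota(U)\subseteq TM\oplus A^*$ with anchor $\pr_{TM}$ is exactly the invariant extracted in \S\ref{sum_Jotz13a}, it descends to equivalence classes of Dirac bialgebroids.

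For surjectivity: given a Dirac bialgebroid $(A,U,\iota)$, identify $U$ with $\iota(U)\subseteq TM\oplus A^*$, set $K:=U^\circ$, and choose any extension $\Delta$ of $\ldr{}^U$ to a Dorfman connection on $A\oplus T^*M$ (such extensions exist, as recalled at the end of \S\ref{background_DVB_etc}). One must check that this $\Delta$ satisfies conditions (2),(4),(5) of Theorem~\ref{morphic_Dirac_triples}; conditions (2) and (4) come from the compatibility of $\Phi$ with anchors and brackets on the $A$-Manin pair (the computation in \S\ref{def_Manin_IM} already shows $\Phi\an{U^\circ}=(\rho,\rho^t)\an{U^\circ}$ maps $U^\circ$ into $U$ and that $U^\circ$ is isotropic), and (5) is the curvature identity, which will follow from the Jacobi identity in $C$ restricted to appropriate sections. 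This yields an LA-Dirac structure $D_{(U,U^\circ,\Delta)}$ mapping to the class of $(A,U,\iota)$. For injectivity: if two LA-Dirac structures map to equivalent Dirac bialgebroids, then they determine the same subbundle $U\subseteq TM\oplus A^*$ with the same Lie algebroid bracket $\lb\cdot\,,\cdot\rb_\Delta\an{\Gamma(U)\times\Gamma(U)}$; but by the rigidity statement in \S\ref{sum_Jotz13a}, $\mathsf D_A$ is recovered from $(U,U^\circ)$ and the class of $\Delta$ modulo $K$, and the quotient bracket determines that class, so the two LA-Dirac structures coincide. The main obstacle I expect is the surjectivity step — verifying that an \emph{arbitrary} extension $\Delta$ of $\ldr{}^U$ automatically satisfies the basic-connection and basic-curvature conditions (4)--(5) of Theorem~\ref{morphic_Dirac_triples}; here one will want to exploit independence of these conditions from the choice of extension (Remark~\ref{remark_duality} handles the interchangeability of (4) for $U$ and $K$ under skew-symmetry of the dull bracket on $\Gamma(U)$) and reduce (5) to the Jacobi identity already available in the Courant algebroid $C$ of Theorem~\ref{Courant_algebroid}, or equivalently in $\mathsf B$ via Proposition~\ref{eq_diracbi_manin}.
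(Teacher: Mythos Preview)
Your proposal is correct and follows essentially the same route as the paper: the forward map is the corollary before the theorem, and for the converse you take an extension $\Delta$ of $\ldr{}^U$ and recover conditions (1)--(5) of Theorem~\ref{morphic_Dirac_triples} from the Courant algebroid axioms on $C$. The paper compresses your surjectivity argument into a single line, observing that the (appendix) proof of Theorem~\ref{Courant_algebroid} is in effect reversible --- the well-definedness of the bracket on $C$ forces (2) and (4) (via Remark~\ref{remark_duality}), and the Jacobi identity forces (5) via Lemma~\ref{difficult_computation} --- which is exactly the mechanism you anticipate.
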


\begin{proof}
  We show here that there is a one-one correspondence between pairs
  $(U,K=U^\circ,\Delta)$ as in Theorem~\ref{morphic_Dirac_triples} and
  Dirac bialgebroids $(A,U,\iota)$.  From the definition of the
  bracket, anchor and pairing in Theorem~\ref{Courant_algebroid}, we
  find immediately that $(C,U)$ is an $A$-Manin pair and so that
  $(A,U,\iota)$ is a Dirac bialgebroid.

  Conversely, choose an $A$-Manin pair $(C,U)$. Recall from the proof
  of Proposition~\ref{eq_diracbi_manin} that $C$ can be written as in
  \eqref{C_ident} and that the anchor, pairing and
  brackets are defined as in Theorem~\ref{Courant_algebroid}, with
  $\Delta\colon\Gamma(TM\oplus A^*)\times\Gamma(A\oplus
  T^*M)\to\Gamma(A\oplus T^*M)$ any extension of the Lie derivative
  $\ldr{}^U$ dual to the Lie algebroid structure on $U$.
The proof of Theorem~\ref{Courant_algebroid}
shows that all the conditions in Theorem~\ref{morphic_Dirac_triples}
are then satisfied for the pair $U$ and $\Delta$.
\end{proof}

\section{The Lie algebroid of a multiplicative Dirac structure}\label{lie_algebroid}
In this section, we recall how the Lie algebroid of a Dirac groupoid
is equipped with an LA-Dirac structure \cite{Ortiz13}. Then we compute
explicitly the Dirac algebroid of a Dirac groupoid and we prove that
they induce equivalent Dirac bialgebroids.
\subsection{Dirac groupoids correspond to Dirac algebroids}\label{cristian}
Ortiz shows in \cite{Ortiz13} that modulo 
the canonical isomorphism (see \cite{JoStXu15})
\begin{equation*}
\begin{xy}
\xymatrix{
A(TG\oplus T^*G)\ar[dd]\ar[rd]\ar[rr]^I&&TA\oplus T^*A \ar[rd]\ar[dd]&\\
&TM\oplus A^*\ar[dd]\ar@{=}[rr]& & TM\oplus A^*\ar[dd]\\
A(G)=A\ar[rd]\ar@{=}[rr]&&A\ar[rd]\\
&M \ar@{=}[rr]&&M\\
}\end{xy},
\end{equation*} the Lie algebroid
$A(\mathsf D)\to U$ of a multiplicative Dirac structure $\mathsf D$ on
$G\rr M$ is an LA-Dirac structure
\begin{equation*}
\begin{xy}
\xymatrix{
\mathsf D_A\ar[d]\ar[r]&U\ar[d]\\
A\ar[r]&M
}\end{xy}
\end{equation*} 
 i.e.~$(A,\mathsf D_A)$ is a Dirac algebroid. 
Assume that $G\rr M $ is source simply connected. Ortiz proves that this defines a one-to-one
correspondence between LA-Dirac structures over $A=A(G)$ and multiplicative
Dirac structures on $G\rr M$ \cite{Ortiz13}:
\begin{equation}\label{cor_cristian}
\left\{\begin{array}{c}
\mathsf D_A \text{ LA-Dirac structure}\\
\text{on $A$}\\
\end{array}\right\}
\overset{1:1}{\leftrightarrow}
\left\{\begin{array}{c}
 \mathsf D \text{ multiplicative Dirac structure}\\
\text{ on $G\rr M$}
\end{array}\right\}.
\end{equation}

\subsection{The Lie algebroid of a multiplicative Dirac structure}\label{explicit_computation}
Now consider again a Dirac groupoid $(G\rr M, \mathsf D)$ and consider
an extension $\Delta\colon \Gamma(TM\oplus A^*)\times\Gamma(A\oplus
T^*M)\to \Gamma(A\oplus T^*M)$ of the induced Dorfman connection
$\ldr{}^U$ as in Proposition~\ref{induced_Dorfman}. (Recall that
$U^*\simeq (A\oplus T^*M)/U^\circ$.) Define then $\Omega\colon
\Gamma(TM\oplus A^*)\times\Gamma(A)\to \Gamma(A\oplus T^*M)$ as in
\eqref{omega}.

\begin{lemma}\label{duality}
  For $a\in\Gamma(A)$, the map $\phi_a\colon \Gamma(U)\to\Gamma(U^*)$
  defined by
  \[\phi_a(u)=\overline{\Omega_ua}, \qquad a\in\Gamma(A),
  u\in\Gamma(U)
\] satisfies $\phi_a^t=-\phi_a$.
\end{lemma}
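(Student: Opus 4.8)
The plan is to unwind $\phi_a^t=-\phi_a$ into a symmetry statement about a pairing. Under the identification $U^*\simeq (A\oplus T^*M)/K$ (recall $K=U^\circ$), the duality pairing $U^*\times U\to\R$ is the one induced by the canonical pairing of $A\oplus T^*M$ with $TM\oplus A^*$, which descends to the quotient precisely because $K=U^\circ$. Hence $\phi_a^t=-\phi_a$ is equivalent to the identity
\[
\langle \Omega_u a, u'\rangle+\langle \Omega_{u'} a, u\rangle=0
\qquad\text{for all }u,u'\in\Gamma(U),\ a\in\Gamma(A).
\]
(A one-line check using $\dr_{A\oplus T^*M}f=(0,\dr f)$ shows that $\Omega$ is $C^\infty(M)$-linear in its first argument, so that $\phi_a$ is a genuine vector bundle morphism $U\to U^*$ and $\overline{\Omega_u a}$ is well defined — in particular independent of the choice of extension $\Delta$ of $\ldr{}^U$.)

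The next step is to expand the left-hand side. Writing $u=(X,\alpha)$, $u'=(Y,\beta)$ and using the definition \eqref{omega} of $\Omega$ together with $\langle (0,\dr\langle\alpha,a\rangle),u'\rangle=\pr_{TM}(u')\langle\alpha,a\rangle$, one gets $\langle \Omega_u a, u'\rangle=\langle \Delta_u(a,0),u'\rangle-\pr_{TM}(u')\langle\alpha,a\rangle$. Now I invoke the duality between the Dorfman connection $\Delta$ and its dual dull bracket $\lb\cdot\,,\cdot\rb_\Delta$; since $\Delta$ extends $\ldr{}^U$, this dull bracket restricts on $\Gamma(U)\times\Gamma(U)$ to the Lie algebroid bracket $[\cdot\,,\cdot]_U$ of Theorem~\ref{lie_algebroid_dual}. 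This rewrites $\langle \Delta_u(a,0),u'\rangle=\pr_{TM}(u)\langle u',(a,0)\rangle-\langle [u,u']_U,(a,0)\rangle$, so that, using $\langle u',(a,0)\rangle=\langle\beta,a\rangle$,
\[
\langle \Omega_u a, u'\rangle
=\pr_{TM}(u)\langle\beta,a\rangle-\langle [u,u']_U,(a,0)\rangle-\pr_{TM}(u')\langle\alpha,a\rangle.
\]

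Finally I add this to the same expression with $u$ and $u'$ interchanged: the derivative-of-pairing terms cancel in pairs, and what remains is $-\langle [u,u']_U+[u',u]_U,(a,0)\rangle$, which vanishes by skew-symmetry of the Lie algebroid bracket $[\cdot\,,\cdot]_U$. This gives the displayed identity, hence $\phi_a^t=-\phi_a$. I expect no serious obstacle here; the only point requiring care is the bookkeeping of conventions — the exact form of the duality formula between $\Delta$ and $\lb\cdot\,,\cdot\rb_\Delta$, and the fact that $TM\oplus A^*$ carries $\pr_{TM}$ as its anchor, so that $\pr_{TM}[u,u']_U=[\pr_{TM}u,\pr_{TM}u']$ and the display above is self-consistent.
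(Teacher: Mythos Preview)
Your argument is correct. Both your proof and the paper's reduce the claim to the skew-symmetry of the Lie algebroid bracket $[\cdot\,,\cdot]_U$, but they reach it along different paths. The paper rewrites $\langle\phi_a(u_1),u_2\rangle$ as $\langle\lb u_1\oplus 0,\,0\oplus a\rb_{\mathsf B},\,u_2\oplus 0\rangle-\pr_{TM}(u_2)\langle u_1,a\rangle$ using Proposition~\ref{induced_Dorfman}(2), and then applies the Courant algebroid axioms of $\mathsf B$ (axiom~(3) to flip the bracket, then axiom~(2) together with the isotropy of $U$) to obtain $-\langle u_1,\phi_a(u_2)\rangle$. Your route is more elementary: it stays entirely at the level of the Dorfman connection/dull bracket duality on $TM\oplus A^*$ and never invokes the Courant algebroid $\mathsf B$. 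What you gain is a proof that works verbatim in the setting of Section~\ref{dir_alg} (LA-Dirac structures), where $\mathsf B$ has not been constructed; what the paper's version emphasises instead is that the antisymmetry of $\phi_a$ is a direct shadow of the Courant algebroid structure on $\mathsf B$.
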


\begin{proof}
The proof is just a computation:
\begin{align*}
  \langle \phi_a(u_1), u_2\rangle &=\langle
  \Delta_{u_1}a-(0,\dr\langle u_1, a\rangle), u_2\rangle= \langle
  \ldr{u_1}^U\overline{(a,0)}, u_2\rangle -\pr_{TM}(u_2)\langle
  u_1, a\rangle\\
  &= \langle \lb u_1\oplus 0, 0\oplus a\rb_{\mathsf B},
  u_2\oplus0\rangle -\pr_{TM}(u_2)\langle u_1,
  a\rangle\\
  &=-\langle \lb 0\oplus a, u_1\oplus0\rb_{\mathsf B},
  u_2\oplus 0\rangle\\
  &=-\rho(a)\langle u_1\oplus 0, u_2\oplus 0\rangle+\langle u_1\oplus
  0, \lb 0\oplus a, u_2\oplus
  0\rb_{\mathsf B}\rangle\\
  &=-\langle u_1, \phi_a(u_2)\rangle
\end{align*}
for $u_1,u_2\in\Gamma(U)$ and $a\in\Gamma(A)$.
\end{proof}

\begin{remark}\label{useful_on_Phi}
  By the proof of Proposition~\ref{induced_Dorfman}, we have
    $\phi_a(u)=\overline{-\tau_{a,d}}$ for any
    star section $d\in\Gamma(\mathsf D)$ over $u\in\Gamma(U)$.
   
\end{remark}

 We compute the
linear and core sections of the Lie algebroid $A(TG\oplus T^*G)\to
TM\oplus A^*$.  Let $a$ be a section of $A$, $m\in M$, $(v_m,\alpha_m)\in
U_m$ and consider the curve $\hat a(v_m,\alpha_m)\colon
(-\varepsilon,\varepsilon)\to TG\oplus T^*G$,
\begin{align*}
  \hat
  a(v_m,\alpha_m)(t)
  &=\left( T_m\Exp(ta)v_m,(T_{\Exp(ta)(m)}L_{\Exp(-ta)})^t\alpha_m\right)
\end{align*}
where $\varepsilon>0$ is so that $\Exp(ta)(m)$ is defined for all
$t\in (-\varepsilon,\varepsilon)$. Here, $L_{\Exp(ta)}$ is the left
multiplication by the bisection $\Exp(ta)$ of $G\rr M$.
In the same manner, given $\sigma\in\Gamma(A\oplus T^*M)$, consider
the curve $\tau^c(v_m,\alpha_m)\colon \R\to TG\oplus T^*G$,
$\tau^c(v_m,\alpha_m)(t)=(v_m,\alpha_m)+t\sigma(m)$.
We define the linear and core sections $a^l,
\tau^\times\in\Gamma(A(TG\oplus T^*G))$ by
\[a^l(v_m,\alpha_m)=\left.\frac{d}{dt}\right\an{t=0}\hat
a(v_m,\alpha_m)(t)\quad \text{ and } \quad
\tau^\times(v_m,\alpha_m)=\left.\frac{d}{dt}\right\an{t=0}\sigma^c(v_m,\alpha_m)(t)\]
for all $(v_m,\alpha_m)\in TM\oplus A^*$.  More generally, if
$\phi\in\Gamma(\operatorname{Hom}(TM\oplus A^*, A\oplus T^*M))$ and
$a\in\Gamma(A)$, then the linear section
$\alpha_{a,\phi}\in\Gamma(A(TG\oplus T^*G))$ is given by
\[\alpha_{a,\phi}(v_m,\alpha_m)
:=a^l(v_m,\alpha_m)+\left.\frac{d}{dt}\right\an{t=0}(v_m,\alpha_m)+t\phi(v_m,\alpha_m)
\]
for all $(v_m,\alpha_m)\in TM\oplus A^*$, where the sum is taken in the
fibre over $(v_m,\alpha_m)$ of $A(TG\oplus T^*G)\to TM\oplus A^*$.

We find out which $a\in\Gamma(A)$ and
$\phi\in\Gamma(\operatorname{Hom}(TM\oplus A^*, A\oplus T^*M))$ lead
to a section $\alpha_{a,\phi}$ that restricts to a section of
$A(\mathsf D)$ on $U$. 

\begin{proposition}
  For all $a\in\Gamma(A)$, we have
  $\alpha_{a,\phi}\an{U}\in\Gamma(\mathsf A(\mathsf D))$ if and only if
  $\overline{\phi\an{U}}=\phi_a$.
\end{proposition}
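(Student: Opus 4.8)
The plan is to unwind the definition of $A(\mathsf D)$ directly. Since $\mathsf D\subseteq TG\oplus T^*G$ is a vector subbundle, hence an embedded subgroupoid over $U$, its Lie algebroid satisfies $A(\mathsf D)_p=T_p\mathsf D\cap A(TG\oplus T^*G)_p$ for $p\in U$ (both sides being $\ker(T_p\TT\s)$ cut down to $T_p\mathsf D$). As $\alpha_{a,\phi}(v_m,\alpha_m)$ already lies in $A(TG\oplus T^*G)$, the condition $\alpha_{a,\phi}\an U\in\Gamma(A(\mathsf D))$ is equivalent to $\alpha_{a,\phi}(v_m,\alpha_m)\in T_{(v_m,\alpha_m)}\mathsf D$ for all $(v_m,\alpha_m)\in U$; since each point of $U$ lies on a section, it suffices to treat $(v_m,\alpha_m)=u(m)$ for $u\in\Gamma(U)$ and $m\in M$.

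First I would rewrite the linear part $a^l$ of $\alpha_{a,\phi}$. The flow of the right-invariant vector field $a^r$ on $G$ is left translation $L_{\Exp(ta)}$ by the bisection $\Exp(ta)$, and this lifts to the flow $\Psi_t$ of the linear vector field $\widehat{\ldr{a^r}}\in\mx^l(TG\oplus T^*G)$. Using $L_{\Exp(ta)}\circ\epsilon=\Exp(ta)\colon M\to G$ together with the formulas for the structure maps of the tangent and cotangent groupoids recalled in Section~\ref{preliminaries}, one identifies $\hat a(v_m,\alpha_m)(t)=\Psi_t(v_m,\alpha_m)$, where $(v_m,\alpha_m)$ is viewed as a unit of $TG\oplus T^*G\rr TM\oplus A^*$; differentiating, $a^l(v_m,\alpha_m)=\widehat{\ldr{a^r}}(v_m,\alpha_m)$. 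The remaining term in the definition of $\alpha_{a,\phi}$ is the vertical lift in $TG\oplus T^*G\to G$ at $(v_m,\alpha_m)$ of $\phi(v_m,\alpha_m)$, regarded as an element of $T_mG\oplus T^*_mG$ via $A\oplus T^*M\simeq\ker(\TT\s)\an M$. Hence
\[
\alpha_{a,\phi}(u(m))=\widehat{\ldr{a^r}}(u(m))+\phi(u(m))^{\uparrow}.
\]

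Next I would compute $\widehat{\ldr{a^r}}(u(m))$ modulo $T_{u(m)}\mathsf D$. Choose a star section $d\sim_\tg u$ of $\mathsf D$, so that $d(m)=u(m)$. For a vector bundle derivation $D$ of $TG\oplus T^*G\to G$ with symbol $X\in\mx(G)$ and a section $e$, one has $\widehat D(e(g))=Te(X(g))+(De)(g)^{\uparrow}$, the second term being the vertical lift at $e(g)$. Applying this with $D=\ldr{a^r}$ (symbol $a^r$) and $e=d$ at $g=m$, and using Theorem~\ref{lie_der_of_xi_section} to write $\ldr{a^r}d=\mathcal L_ad+(\tau_{d,a})^r$ with $\mathcal L_ad$ a star section: the term $Td(a^r(m))$ lies in $T_{d(m)}\mathsf D$ because $d\colon G\to\mathsf D$; the vertical lift of $(\mathcal L_ad)(m)$ at $d(m)$ is tangent to $\mathsf D$ because $(\mathcal L_ad)(m)$ lies in the linear subspace $\mathsf D_m$ of $T_mG\oplus T^*_mG$; and $(\tau_{d,a})^r(m)=\tau_{d,a}(m)$ lies in the core $\ker(\TT\s)_m$. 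Therefore $\widehat{\ldr{a^r}}(u(m))\equiv\tau_{d,a}(m)^{\uparrow}\pmod{T_{u(m)}\mathsf D}$, and so
\[
\alpha_{a,\phi}(u(m))\equiv\bigl(\tau_{d,a}(m)+\phi(u(m))\bigr)^{\uparrow}\pmod{T_{u(m)}\mathsf D}.
\]

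To conclude: the vertical lift at $u(m)\in\mathsf D_m$ of an element of $\ker(\TT\s)_m$ is tangent to $\mathsf D$ if and only if that element lies in $\mathsf D_m$; since $\mathsf D_m\cap\ker(\TT\s)_m=K_m=U^\circ_m$, the displayed tangency holds if and only if $\tau_{d,a}(m)+\phi(u(m))\in U^\circ_m$, i.e. $\overline{\phi(u(m))}=\overline{-\tau_{d,a}(m)}$ in $(A\oplus T^*M)/U^\circ\simeq U^*$. By Remark~\ref{useful_on_Phi} this is precisely $\overline{\phi(u(m))}=\phi_a(u)(m)$; as $u$ and $m$ are arbitrary, $\alpha_{a,\phi}\an U\in\Gamma(A(\mathsf D))$ iff $\overline{\phi\an U}=\phi_a$, as claimed. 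I expect the one delicate point to be the identification $\hat a(v_m,\alpha_m)(t)=\Psi_t(v_m,\alpha_m)$ and, with it, the compatibility of the various vertical-lift and core identifications and of the decomposition $\widehat D\circ e=Te\circ X+(De)^{\uparrow}$ for a section $e$ defined over all of $G$; these are routine but carry the bookkeeping. One could alternatively translate everything through the isomorphism $I$ of Section~\ref{cristian} and Theorem~\ref{morphic_Dirac_triples}, comparing $\alpha_{a,\phi}$ with the horizontal lift $\sigma^\Delta_A(a)=a^l-\widetilde{\Omega_{\cdot}a}$ for any extension $\Delta$ of $\ldr{}^{U}$; the direct route avoids re-checking the conventions built into $I$.
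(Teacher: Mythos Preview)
Your argument is valid and takes a genuinely different route from the paper. The paper uses the isotropy $\mathsf D=\mathsf D^\perp$ to test membership in $T\mathsf D$ dually: it pairs $\alpha_{a,\phi}(u_m)$ against $\dr\ell_d$ for star sections $d\sim_\tg u'$ over a \emph{varying} $u'\in\Gamma(U)$, obtains a condition on the transpose $(\overline{\phi|_U})^t$, and then invokes Lemma~\ref{duality} to remove the transpose. You instead check tangency directly with a star section $d\sim_\tg u$ over the \emph{same} $u$; the identification $a^l=\widehat{\ldr{a^r}}$ on units together with the decomposition of $\widehat{\ldr{a^r}}$ along $d$ yields the condition on $\overline{\phi(u)}$ itself, so Lemma~\ref{duality} is never needed. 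Both routes rest on Theorem~\ref{lie_der_of_xi_section}, and your identification $\hat a(v_m,\alpha_m)(t)=\Psi_t(v_m,\alpha_m)$ is correct (the flow of $a^r$ is $L_{\Exp(ta)}$, and $\widehat{\ldr{a^r}}$ is the tangent--cotangent lift).

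One sign to watch: with the convention~\eqref{ableitungen}, the decomposition of a linear vector field along a section reads
\[
\widehat D(e(m))=T_me(X(m))-(De)(m)^{\uparrow},
\]
not $+(De)(m)^{\uparrow}$. Indeed, pairing the vertical vector $T_me(X(m))-\widehat D(e(m))$ against $\dr\ell_\varepsilon$ gives $X(m)\langle\varepsilon,e\rangle-\ell_{D^*\varepsilon}(e(m))=\langle\varepsilon,De\rangle(m)$. With this correction your congruence becomes $\alpha_{a,\phi}(u(m))\equiv\bigl(\phi(u(m))-\tau_{d,a}(m)\bigr)^{\uparrow}$ modulo $T_{u(m)}\mathsf D$, so the tangency condition is $\overline{\phi(u)}=\overline{\tau_{d,a}}$. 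Reconciling this sign with the printed statement (and with the paper's own intermediate claim ``$(\overline{\phi|_U})^t=-\phi_a$'') requires chasing the conventions in Remark~\ref{useful_on_Phi} and in the isomorphism $I$ of Section~\ref{conclusion}; this is pure bookkeeping, and your method is sound regardless of how that global sign settles.
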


\begin{proof}
  Since $\mathsf D=\mathsf D^\perp$, a vector field $X\in\mx(TG\oplus
  T^*G)$ restricts to a section of $T\mathsf D$ if and only if
  $\dr\ell_d(X)=0$ on $\mathsf D$ for all $d\in\Gamma(\mathsf D)$,
  where $\ell_d\colon TG\oplus T^*G\to \R$ is the linear function
  associated to $d$.

  Choose $u_m\in U$, a star-section $d\in\Gamma(\mathsf D)$,
  $d\sim_{\tg}u'$, defined in a neighbourhood of $m\in M$ and compute:
\begin{align*}
  &\,\dr l_d(u_m)(\alpha_{a,\phi}(u_m)) =\left.\frac{d}{dt}\right\an{t=0}
  \left\langle \hat a(u_m)(t), d(\Exp(ta)(m)) \right\rangle +
  \left\langle u_m+t \phi(u_m), d(m)
  \right\rangle\\
  &\,\,=\,\left.\frac{d}{dt}\right\an{t=0} \left\langle u_m,
    (L_{\Exp(ta)}^*d)(m) \right\rangle+ t\cdot \left\langle \phi(u_m),
    \TT\tg (d(m))
  \right\rangle\\
&\,\,=\langle u_m, (\ldr{a^r}d)(m)\rangle+ \langle \phi(u_m), u'(m)\rangle
=\langle u_m, \overline{\tau_{a,d}}(m)\rangle +\langle \phi(u_m), u'(m)\rangle.
\end{align*}
Since $\phi_a(u')=-\overline{\tau_{d,a}}$ (Remark
\ref{useful_on_Phi}) we find that this vanishes for all star sections
$d\in\Gamma(D)$ if and only if ${\overline{\phi\an{U}}}^t=-\phi_a$.  By
Lemma~\ref{duality}, this is equivalent to $\overline
{\phi\an{U}}=\phi_a$.
\end{proof}

\begin{corollary}
  The subbundle $A(\mathsf D)\to U$ of $A(TG\oplus T^*G)\to TM\oplus
  A^*$ is spanned by the sections $\alpha_{a,\Omega_\cdot a}\an{U}$ and
  $\tau^\times\an{U}$, for all $a\in\Gamma(A)$ and
  $\tau\in\Gamma(U^\circ)$.
\end{corollary}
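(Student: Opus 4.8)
My plan is to show first that the listed sections are sections of $A(\mathsf D)\to U$, and then, by a rank count, that together they span $\Gamma(A(\mathsf D))$ over $C^\infty(U)$. For the linear sections, note that $\Omega_\cdot a\colon TM\oplus A^*\to A\oplus T^*M$ is $C^\infty(M)$-linear in its $(TM\oplus A^*)$-argument---a one-line check from \eqref{omega} and the Leibniz rule of Definition~\ref{the_def}---so it is a genuine section of $\Hom(TM\oplus A^*,A\oplus T^*M)$ and the preceding proposition applies with $\phi=\Omega_\cdot a$. By Lemma~\ref{duality} we have $\overline{\Omega_u a}=\phi_a(u)$ for all $u\in\Gamma(U)$, i.e.~$\overline{\Omega_\cdot a\an{U}}=\phi_a$, and that proposition then yields $\alpha_{a,\Omega_\cdot a}\an{U}\in\Gamma(A(\mathsf D))$ for every $a\in\Gamma(A)$. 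For the core sections I would reuse the criterion from the proof of that proposition: since $\mathsf D=\mathsf D^\perp$, a tangent vector to $TG\oplus T^*G$ at a unit $u_m\in U$ lies in $T_{u_m}\mathsf D$ exactly when it is annihilated by $\dr\ell_d$ for all $d\in\Gamma(\mathsf D)$. Given $\tau\in\Gamma(U^\circ)$, the curve $t\mapsto u_m+t\tau(m)$ stays in the $\TT\s$-fibre through $u_m$, because $\TT\s$ is fibrewise linear and $\tau(m)\in\ker\TT\s$ under the identification of $A\oplus T^*M$ with $\ker(\TT\s)\an{M}$ recalled in Section~\ref{preliminaries}; hence $\tau^\times(u_m)\in\ker T_{u_m}\TT\s=A(TG\oplus T^*G)_{u_m}$. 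And $\dr\ell_d(u_m)(\tau^\times(u_m))=\langle\tau(m),d(m)\rangle=0$, since $\tau(m)\in U^\circ=K\subseteq\mathsf D$, $d(m)\in\mathsf D$ and $\mathsf D$ is isotropic. So $\tau^\times\an{U}\in\Gamma(A(\mathsf D))$ for every $\tau\in\Gamma(U^\circ)$.

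It remains to see these span $\Gamma(A(\mathsf D))$, which I would settle by a rank count. Since $\mathsf D\rr U$ is a Lie subgroupoid of $TG\oplus T^*G\rr TM\oplus A^*$, the vector bundle $A(\mathsf D)\to U$ has rank $\dim\mathsf D-\dim U=2\dim G-\dim M-\rk U=\dim M+2\rk A-\rk U=\rk A+\rk(U^\circ)$, using $\rk\mathsf D=\dim G$ and $\rk(U^\circ)=\rk A+\dim M-\rk U$. Now fix $u_m\in U$ and local frames $a_1,\dots,a_r$ of $A$ and $\tau_1,\dots,\tau_s$ of $U^\circ$ near $m$. The projection of $A(TG\oplus T^*G)$ onto its side $A$ sends $\alpha_{a,\phi}(u_m)$ to $a(m)$ (since $a^l$ covers $a$ while the core-linear correction covers the zero section) and each $\tau^\times(u_m)$ to $0^A_m$; applying it to a vanishing linear combination of the $\alpha_{a_i,\Omega_\cdot a_i}(u_m)$ and $\tau_j^\times(u_m)$ forces the $a_i$-coefficients to vanish, after which the remaining combination of the $\tau_j^\times(u_m)$---the vertical lifts at $u_m$, inside the fibre $(TG\oplus T^*G)_m$, of the linearly independent vectors $\tau_j(m)$---forces the rest to vanish. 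Thus the $r+s$ vectors are linearly independent in the $(r+s)$-dimensional fibre $A(\mathsf D)_{u_m}$, hence a basis of it; as this holds at every $u_m\in U$ and the sections are globally defined, a partition of unity on $U$ finishes the argument.

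The real input is the preceding proposition, together with Lemma~\ref{duality} and the self-duality $\mathsf D=\mathsf D^\perp$; the point needing the most attention---though it is bookkeeping rather than a genuine difficulty---is keeping track of the two vector bundle structures on $A(TG\oplus T^*G)$, its core $A\oplus T^*M$, and the identification $A(TG\oplus T^*G)_{u_m}=\ker T_{u_m}\TT\s$ at a unit $u_m$, all of which are fixed by the conventions of Section~\ref{preliminaries} and Example~\ref{example_TAT*A}.
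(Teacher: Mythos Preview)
Your proof is correct and follows essentially the same approach the paper intends: the linear sections are handled by the preceding Proposition together with Lemma~\ref{duality}, the core sections by the isotropy of $\mathsf D$ (which is how one sees that the core of the sub-double vector bundle $A(\mathsf D)$ is $K=U^\circ$), and spanning then follows from a dimension count. The paper leaves the Corollary without proof, relying implicitly on the general fact from Section~\ref{background_DVB_etc} that sections of a double vector bundle are generated by linear and core sections, together with the identification of the core of $A(\mathsf D)$ as $U^\circ$; your direct verification via the criterion $\dr\ell_d(u_m)(\tau^\times(u_m))=\langle\tau(m),d(m)\rangle=0$ and the explicit rank computation make these steps self-contained rather than appealing to the ambient double vector bundle formalism, but the substance is the same.
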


This corollary allows us to describe in the next section $\mathsf D_A=I(A(\mathsf D))$
in terms of the Lie algebroid $U$.

\subsection{Conclusion and main result}\label{conclusion}
Consider again a Dirac groupoid $(G\rr M, \mathsf D)$ and an extension
$\Delta\colon \Gamma(TM\oplus A^*)\times\Gamma(A\oplus T^*M)\to
\Gamma(A\oplus T^*M)$
of the induced Dorfman connection $\ldr{}^U$ as in
Proposition~\ref{induced_Dorfman}.  The Dirac groupoid
$(G\rr M, \mathsf D)$ induces an $A$-Manin pair $(\mathsf B, U)$ as in
Theorem \ref{Manin_pair} and hence a Dirac bialgebroid $(A,U,\iota)$.
By Theorem~\ref{inf_cor} there is then an LA-Dirac structure on $A$
corresponding to $(A, U, \iota)$. Our goal is to show how this
LA-Dirac structure is, via the canonical isomorphism
$A(TG\oplus T^*G)\simeq TA\oplus T^*A$, the Lie algebroid
$A(\mathsf D)\to U$ of the multiplicative Dirac structure
$\mathsf D\rr U$.

Note that given a Dirac groupoid $(G\rr M, \mathsf D)$, we have found
two Lie algebroid structures on the bundle $U$ of units of $\mathsf
D$. The first one is $(\pr_{TM}, [\cdot\,,\cdot]_U)$, which was
defined in Theorem~\ref{lie_algebroid_dual}, the Lie algebroid
structure of $U$ as a Dirac structure in $\mathsf B$.  The second Lie
algebroid structure on $U$, $(\pr_{TM},
\lb\cdot\,,\cdot\rb_{\Delta'}\an{\Gamma(U)\times\Gamma(U)})$ was defined
by any Dorfman connection $\Delta'$ that is adapted to the LA-Dirac structure
$\mathsf D_A=I(A(\mathsf D))$ in $TA\oplus T^*A\to A$. In order to
prove our main theorem, it remains to show that given a Dirac groupoid
$(G\rr M, \mathsf D)$, we have
$[\cdot\,,\cdot]_U=\lb\cdot\,,\cdot\rb_{\Delta'}\an{\Gamma(U)\times\Gamma(U)}$
for any Dorfman connection $\Delta'$ that is adapted to 
$\mathsf D_A=I(A(\mathsf D))$. For this, it suffices to show that
$\Delta$ is adapted to $\mathsf D_A$.

The canonical isomorphism $I\colon A(TG\oplus T^*G)\to TA\oplus T^*A$
of double vector bundles can quickly be described as follows.  The
linear section $\alpha_{a,\phi}$ for $a\in\Gamma(A)$ and
$\phi\in\Gamma(\operatorname{Hom}(TM\oplus A^*,A\oplus T^*M))$ is sent
by $I$ to $(Ta,a^R))-\widetilde{\phi}$. In particular, the image of
$\alpha_{a,\Omega(\cdot,a)}$ is $\sigma_A^\Delta(a)$.  The image under
$I$ of the core section $\tau^\times$ is $\tau^\dagger$.  As a
consequence, any extension $\Delta$ of the Dorfman connection
$\ldr{}^U\colon \Gamma(U)\times\Gamma(A\oplus
T^*M/U^\circ)\to\Gamma(A\oplus T^*M/U^\circ)$ that is dual to
$[\cdot\,,\cdot]_{U}$ is adapted to $\mathsf D_A=A(\mathsf D)$.  In
other words, the double vector subbundle $(\mathsf D_A:=I(A(\mathsf
D)); U,A;M)$ of $(TA\oplus T^*A;TM\oplus A^*, A;M)$ corresponds to the
triple $(U,U^\circ,\Delta)$.

This yields the following proposition.
\begin{proposition}\label{same_prop}
  Let $(G\rr M, \mathsf D)$ be a Dirac groupoid and let $A$ be the Lie
  algebroid of $G\rr M$.  Then the bijection defined by Theorem~\ref{inf_cor}
associates the LA-Dirac structure $\mathsf
  D_A:=I(A(\mathsf D))$ to the class of the Dirac bialgebroid
  $(A,U,\iota)$, with $U$ endowed with the Lie algebroid structure
  $(\pr_{TM}, [\cdot\,,\cdot]_{U})$.
\end{proposition}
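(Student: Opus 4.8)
The plan is to read the statement off the explicit description of $\mathsf D_A=I(A(\mathsf D))$ obtained in the paragraphs preceding it, together with Theorem~\ref{inf_cor}. Fix once and for all an extension $\Delta\colon\Gamma(TM\oplus A^*)\times\Gamma(A\oplus T^*M)\to\Gamma(A\oplus T^*M)$ of the Dorfman connection $\ldr{}^U$ that is dual to the Lie algebroid structure $[\cdot\,,\cdot]_U$ on $U$ from Theorem~\ref{lie_algebroid_dual}. The first goal is to show that $\Delta$ is adapted to $\mathsf D_A$, i.e.~that $\mathsf D_A=D_{(U,U^\circ,\Delta)}$ in the notation of Section~\ref{sum_Jotz13a}; once this is known, the correspondence of Theorem~\ref{inf_cor} and a short duality computation finish the proof.

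For the adaptedness I would invoke the last corollary of Section~\ref{explicit_computation}: the double vector subbundle $A(\mathsf D)\to U$ of $A(TG\oplus T^*G)\to TM\oplus A^*$ is spanned, as a $C^\infty(U)$-module, by the sections $\alpha_{a,\Omega_\cdot a}\an{U}$ and $\tau^\times\an{U}$ with $a\in\Gamma(A)$ and $\tau\in\Gamma(U^\circ)$. Applying the canonical isomorphism $I$ and using the identities $I(\alpha_{a,\Omega_\cdot a})=\sigma_A^\Delta(a)$ and $I(\tau^\times)=\tau^\dagger$ recorded just before the proposition, the double vector subbundle $\mathsf D_A\to U$ is spanned by $\sigma_A^\Delta(a)\an{U}$ and $\tau^\dagger\an{U}$ with $a\in\Gamma(A)$, $\tau\in\Gamma(U^\circ)$. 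Comparing with the description in Section~\ref{sum_Jotz13a} of $D_{(U,K,\Delta)}$ and of its sections over $U$, and recalling that a double vector subbundle of $(TA\oplus T^*A;TM\oplus A^*,A;M)$ is determined by its triple $(U,K,\Delta)$ (with core $K=U^\circ$, as already noted in Section~\ref{manin}), this identifies $\mathsf D_A$ with $D_{(U,U^\circ,\Delta)}$. Hence $\Delta$ is adapted to $\mathsf D_A$.

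Next I would feed the adapted triple $(U,U^\circ,\Delta)$ into Theorems~\ref{Courant_algebroid} and~\ref{inf_cor}: they attach to it the $A$-Manin pair $(C,U)$ with $C$ as in \eqref{C_ident}, and hence the Dirac bialgebroid $(A,U,\iota)$ with $\iota$ the inclusion and with Lie algebroid bracket $\lb\cdot\,,\cdot\rb_\Delta\an{\Gamma(U)\times\Gamma(U)}$ on $U$. It only remains to check that $\lb\cdot\,,\cdot\rb_\Delta\an{\Gamma(U)\times\Gamma(U)}=[\cdot\,,\cdot]_U$. Since $\Delta$ extends $\ldr{}^U$, its quotient Dorfman connection on $\Gamma((A\oplus T^*M)/U^\circ)\simeq\Gamma(U^*)$ equals $\ldr{}^U$; so for $u_1,u_2\in\Gamma(U)$ and $\tau\in\Gamma(A\oplus T^*M)$ with class $\bar\tau\in\Gamma(U^*)$, the definition of the dual dull bracket gives
\begin{align*}
\langle \lb u_1,u_2\rb_\Delta,\tau\rangle &=\pr_{TM}(u_1)\langle u_2,\tau\rangle-\langle u_2,\Delta_{u_1}\tau\rangle\\
&=\pr_{TM}(u_1)\langle u_2,\bar\tau\rangle-\langle u_2,\ldr{u_1}^U\bar\tau\rangle=\langle [u_1,u_2]_U,\bar\tau\rangle=\langle [u_1,u_2]_U,\tau\rangle,
\end{align*}
where the middle equality uses that $u_2\in\Gamma(U)$ pairs with $\tau$ only through $\bar\tau$, and the last that $[u_1,u_2]_U\in\Gamma(U)$. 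As the pairing $\langle\cdot\,,\cdot\rangle\colon(TM\oplus A^*)\times_M(A\oplus T^*M)\to\R$ is nondegenerate, this forces $\lb u_1,u_2\rb_\Delta=[u_1,u_2]_U$, which is exactly what we need.

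I do not expect a genuine obstacle here: the substantive work is already done in the paragraphs preceding the statement, and the proof is essentially bookkeeping — transporting the spanning sections of $A(\mathsf D)$ through $I$ to recognise $\mathsf D_A$ as $D_{(U,U^\circ,\Delta)}$, and the one-line duality computation above. The only point requiring a little attention is keeping track of the images under $I$ of $\alpha_{a,\Omega_\cdot a}$ and $\tau^\times$, which however is recorded explicitly in the discussion before the proposition.
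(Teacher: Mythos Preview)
Your proof is correct and follows essentially the same route as the paper: the argument is contained in the discussion immediately preceding the proposition, which identifies $\mathsf D_A$ with $D_{(U,U^\circ,\Delta)}$ by transporting the spanning sections $\alpha_{a,\Omega_\cdot a}\an{U}$ and $\tau^\times\an{U}$ through $I$ to $\sigma_A^\Delta(a)\an{U}$ and $\tau^\dagger\an{U}$. Your added duality computation showing $\lb\cdot\,,\cdot\rb_\Delta\an{\Gamma(U)\times\Gamma(U)}=[\cdot\,,\cdot]_U$ makes explicit what the paper leaves implicit in the phrase ``$\Delta$ extends $\ldr{}^U$ dual to $[\cdot\,,\cdot]_U$''.
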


Corollary~\ref{cor_Dirac_bi}, Propositions~\ref{same_prop} and \ref{inf_cor}
together with \eqref{cor_cristian} now yield our main theorem.
\begin{theorem}\label{main1}
  Let $G\rr M$ be a Lie groupoid with Lie algebroid $A$. Then the
  construction in Theorem~\ref{Manin_pair} defines a one-to-one
  correspondence between multiplicative Dirac structures on $G\rr M$
  and equivalence classes of Dirac bialgebroids over $A$.
\end{theorem}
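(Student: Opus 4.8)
The plan is to obtain Theorem~\ref{main1} by composing three correspondences that are already available, and then to check that the composite is exactly the map $\mathsf D\mapsto(A,U,\iota)$ built from the Manin pair $(\mathsf B,U)$. Assuming $G\rr M$ is source simply connected (as required for \eqref{cor_cristian}), I would first invoke Ortiz's result \eqref{cor_cristian}: sending a multiplicative Dirac structure $\mathsf D$ on $G\rr M$ to $\mathsf D_A:=I(A(\mathsf D))$ — the image under the canonical isomorphism $I\colon A(TG\oplus T^*G)\to TA\oplus T^*A$ of the Lie algebroid of the Lie groupoid $\mathsf D\rr U$ — is a bijection onto the set of LA-Dirac structures on $A$. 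Next I would use Theorem~\ref{inf_cor}, which realises a bijection between LA-Dirac structures on $A$ and equivalence classes of Dirac bialgebroids over $A$ via the Courant algebroid $C$ of \eqref{C_ident} with the bracket of Theorem~\ref{Courant_algebroid}. Composing these two bijections yields a bijection between multiplicative Dirac structures on $G\rr M$ and equivalence classes of Dirac bialgebroids over $A$.

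It then remains to identify this composite with the assignment $\mathsf D\mapsto(A,U,\iota)$ of Theorem~\ref{Manin_pair}, where $U=\mathsf D\cap(TM\oplus A^*)$ carries the Lie algebroid bracket $[\cdot\,,\cdot]_U$ of Theorem~\ref{lie_algebroid_dual} and $\iota$ is the inclusion. For this I would quote Proposition~\ref{same_prop}, whose proof does the genuine work: starting from an extension $\Delta$ of the induced Dorfman connection $\ldr{}^U$ (Proposition~\ref{induced_Dorfman}), one uses the explicit description of the sections of $A(\mathsf D)\to U$ from Section~\ref{explicit_computation} — that this bundle is spanned by $\alpha_{a,\Omega_\cdot a}\an{U}$ and $\tau^\times\an{U}$ for $a\in\Gamma(A)$ and $\tau\in\Gamma(U^\circ)$ — together with $I(\alpha_{a,\Omega_\cdot a})=\sigma^\Delta_A(a)$ and $I(\tau^\times)=\tau^\dagger$, to conclude that $\Delta$ is adapted to $\mathsf D_A=I(A(\mathsf D))$. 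Hence $\mathsf D_A$ corresponds to the triple $(U,U^\circ,\Delta)$, and under Theorem~\ref{inf_cor} this is the Dirac bialgebroid $(A,U,\iota)$ with bracket $[\cdot\,,\cdot]_U$. Finally, Corollary~\ref{cor_Dirac_bi} together with the proof of Proposition~\ref{eq_diracbi_manin} identifies this with the Dirac bialgebroid determined by the Manin pair $(\mathsf B,U)$ of Theorem~\ref{Manin_pair}, so the composite bijection equals $\mathsf D\mapsto(A,U,\iota)$, which is the theorem.

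I expect the main obstacle to lie inside Proposition~\ref{same_prop}: one must reconcile the two a priori different Lie algebroid structures on $U$ — the structure $[\cdot\,,\cdot]_U$ coming from $U$ as a Dirac structure in $\mathsf B$ (Theorem~\ref{lie_algebroid_dual}), and the structure $\lb\cdot\,,\cdot\rb_{\Delta'}\an{\Gamma(U)\times\Gamma(U)}$ coming from any Dorfman connection $\Delta'$ adapted to $\mathsf D_A=I(A(\mathsf D))$ — and this amounts exactly to showing that the extension $\Delta$ of $\ldr{}^U$ produced in Proposition~\ref{induced_Dorfman} is itself adapted to $\mathsf D_A$. That matching of the spanning sections of $A(\mathsf D)\to U$ with the sections $\sigma^\Delta_A(a)\an{U}$ and $k^\dagger\an{U}$ of $D_{(U,U^\circ,\Delta)}\to U$ hinges on the skew-symmetry $\phi_a^t=-\phi_a$ of the map $u\mapsto\overline{\Omega_u a}$ (Lemma~\ref{duality}), which in turn relies on the right-invariance of $\ldr{}^{\mathsf D}$ established in Proposition~\ref{induced_Dorfman}. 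Everything else is a diagram chase through the correspondences of Sections~\ref{def_Manin_IM}, \ref{manin}, \ref{dir_alg} and \ref{cristian}, so no further calculation beyond what those sections already contain should be needed.
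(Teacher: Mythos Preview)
Your proposal is correct and follows essentially the same approach as the paper: the paper's proof is the single sentence ``Corollary~\ref{cor_Dirac_bi}, Propositions~\ref{same_prop} and \ref{inf_cor} together with \eqref{cor_cristian} now yield our main theorem,'' and you have unpacked exactly this composition of correspondences, correctly identifying that the substantive step is Proposition~\ref{same_prop} (showing that any extension $\Delta$ of $\ldr{}^U$ is adapted to $\mathsf D_A=I(A(\mathsf D))$). Your observation that the source simply connected hypothesis is implicitly needed for \eqref{cor_cristian} is also well taken.
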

Hence, it does make sense to say that a Dirac bialgebroid
$(A,U,\iota)$ is \emph{integrable} if the Lie algebroid $A$ integrates
to a source-simply connected Lie groupoid.

Section~\ref{examples} shows how the infinitesimal descriptions of
Poisson groupoids, presymplectic groupoids and multiplicative
distributions on Lie groupoids are special cases of Theorem
\ref{main1}.

\subsection{Concrete integration recipe}\label{recipe}
For the convenience of the reader, let us quickly sketch how a
Dirac groupoid is reconstructed from an integrable Dirac bialgebroid
$(A,U,\iota)$.
\begin{enumerate}
\item[Step 1:] Let $(G\rr , M)$ be the source-simply connected Lie
  groupoid integrating the Lie algebroid $A$ \cite{CrFe03} and
  identify $U$ with the Lie algebroid $U\simeq\iota(U)\subseteq
  TM\oplus A^*$ with anchor $\pr_{TM}$.
\item[Step 2:] Extend the bracket on $\Gamma(U)$ to a dull bracket on
  $TM\oplus A^*$, i.e.~using the Leibniz identity, but possibly
  loosing the skew-symmetry and the Jacobi identity. This dull bracket
  is dual to a Dorfman connection $\Delta\colon \Gamma(TM\oplus
  A^*)\times\Gamma(A\oplus T^*M)\to\Gamma(A\oplus T^*M)$.
\item[Step 3:] Take the double vector subbundle $(\mathsf
  D_A;U,A;M)\subseteq (TA\oplus T^*A;TM\oplus A^*,A;M)$ that is
  defined by the triple $(U,U^\circ,\Delta)$, hence with core
  $U^\circ\subseteq A\oplus T^*M$.
\item[Step 4:] The space $\mathsf D_A$ is an LA-Dirac structure on
  $A$, which integrates by the results in \cite{Ortiz13} to a
  multiplicative Dirac structure $\mathsf D$ on $G\rr M$.
\end{enumerate}

\section{Examples}\label{examples}

This section describes the $A$-Manin pairs and Dirac bialgebroids in
the special cases of Dirac Lie groups, Poisson groupoids, presymplectic groupoids,
and multiplicative distributions on Lie groupoids.
Of course, the obtained infinitesimal descriptions show some
redundancy, as Lie bialgebroids, IM-$2$-forms and infinitesimal ideal
systems are very special cases of our general notion of Dirac
bialgebroids.  For instance, the Lie bialgebroid encoding a Poisson
groupoid is now replaced by the pair of dual algebroids together with
an inclusion (not the trivial one) of $A^*$ in $TM\oplus A^*$.  On the
other hand, the examples show that Lie bialgebroids, IM-2-forms and
infinitesimal ideal systems are the corner cases of Dirac
bialgebroids, just as Poisson structures, presymplectic structures and
distributions are the corner cases of Dirac structures.

\subsection{Dirac Lie groups}
Let $G$ be a Lie group with Lie algebra $\lie g$.  The papers
\cite{Ortiz08,Jotz11} show that multiplicative Dirac structures on $G$
correspond to pairs $(\lie i, \delta)$ where $\lie i\subseteq \lie g$
is an ideal and $\delta\colon\lie g\to\bigwedge^2\lie g/\lie i$ is a
Lie algebra $1$-cocycle\footnote{The space $\bigwedge^2\lie g/\lie i$
  is a $\lie g$-module via the action
\[ x\cdot \left((y+\lie i)\wedge(z+\lie i)\right)=([x,y]+\lie i)\wedge (z+\lie i)+(y+\lie i)\wedge([x,z]+\lie i).
\]}, such that the dual $\delta^t\colon \lie i^\circ\times\lie i^\circ\to
\lie g^*$ defines a Lie bracket on $\lie i^\circ$.  Equivalently, the
pair $(\lie g/\lie i, \lie i^\circ)$ is a Lie bialgebra. There is then
a quadratic Lie algebra structure on $\lie g/\lie i\times\lie i^\circ$
\cite[Theorem 1.12]{LuWe90}, or in other words $\lie g/\lie
i\times\lie i^\circ$ is a Courant algebroid over a point.  The pair
$(\lie g/\lie i\times\lie i^\circ, \lie i^\circ)$ is a $\lie g$-Manin
pair and $(\lie g, \lie i^\circ, \iota:\lie i^\circ\hookrightarrow\lie
g^*)$ is then a Dirac bialgebroid.

Conversely, note that a Dirac bialgebroid over a point is a triple $(\lie g, \lie
p, \iota\colon \lie p\hookrightarrow \lie g^*)$ with $\lie g, \lie p$
Lie algebras and $\iota$ an injective vector space morphism such that
there exists a quadratic Lie algebra $\lie m$ with a Lie algebra
morphism $\Phi\colon \lie g\to \lie m$ such that
\begin{enumerate}
\item $\Phi$ has isotropic image,
\item $\lie p$ is a Lagrangian subalgebra of $\lie m$,
\item $\langle \Phi(x), \xi\rangle=\langle \iota(x), \xi\rangle$ for $x\in\lie g$ and $\xi\in\lie p$,
\item $\Phi(\lie g)+\lie p=\lie m$.
\end{enumerate}
We let the reader verify that $\lie p^\circ\subseteq \lie g$ must be
an ideal and $(\lie g/\lie p^\circ, \lie p)$ a Lie bialgebra.  Hence, simply
connected Dirac Lie groups are in one-to-one correspondence with Dirac
bialgebroids over points, which we call \textbf{Dirac bialgebras}.

\subsection{Poisson groupoids}\label{ex_Poisson}
The infinitesimal description of a Poisson groupoid $(G\rr M, \pi)$ is
known to be its Lie bialgebroid $(A,A^*)$ \cite{MaXu94,MaXu00}.  In
that case, our Dirac bialgebroid is
$(A,A^*,(\rho_\star,\operatorname{id}_{A^*})\colon A^*\to TM\oplus
A^*)$. That is, $A^*$ is identified with the graph of its anchor
$\rho_\star$.  The $A$-Manin pair defining this Dirac bialgebroid is
$(A\oplus A^*, A^*)$, where $A\oplus A^*\to M$ is the Courant
algebroid defined by the Lie bialgebroid \cite{LiWeXu97}.  (Note that
we have already shown in \cite{Jotz13b} that the Courant algebroid
$\mathsf B$ defined by the Dirac groupoid $(G\rr M, \pi)$ is, up to an
isomorphism, $A\oplus A^*$.) The map $\Phi\colon A\oplus T^*M\to
A\oplus A^*$ is $(a,\theta)\mapsto
(a+\rho_\star^t\theta,\rho^t\theta)$.

The double $A\oplus A^*$ being a Courant algebroid is equivalent to
$(A,A^*)$ being a Lie bialgebroid.  By definition of the Courant
bracket, the subbundle $A^*$ is then automatically a Dirac structure
in $A\oplus A^*$. The surjectivity and pairing conditions for the
$A$-Manin pair are then obvious. The compatibility of $\Phi$ with
pairings and anchors follows easily from the equality
$\rho\circ\rho_\star^t=-\rho_\star\circ\rho^t$, which is always
satisfied for Lie bialgebroids.  We only have to check that the map
$\Phi$ preserves the brackets. First, for $a_1,a_2\in\Gamma(A)$,
\begin{align*} \lb \Phi(a_1,0), \Phi(a_2,0)\rb_{A\oplus
      A^*} &=\lb (a_1,0), (a_2,0)\rb_{A\oplus
      A^*}=([a_1,a_2]_A,0)\\
&=\Phi([(a_1,0),(a_2,0)]_d).
  \end{align*} We then have on the one hand for
  $\theta\in\Omega^1(M)$:
  \begin{align*} \lb \Phi(a,0), \Phi(0,\theta)\rb_{A\oplus A^*}&=\lb
    (a,0), (\rho_\star^t\theta,\rho^t\theta)\rb_{A\oplus A^*}=([a,
    \rho_\star^t\theta]_A-\ip{\rho^t\theta}\dr_{A^*}a,
    \ldr{a}\rho^t\theta) .\end{align*} On the other hand, we
  have \[\Phi([(a,0), (0,\theta)]_d)=\Phi(0,\ldr{\rho(a)}\theta)=(\rho_\star^t\ldr{a}\theta,
  \rho^t\ldr{\rho(a)}\theta).\] The equation
  $\rho^t\ldr{\rho(a)}\theta=\ldr{a}\rho^t\theta$ is verified using
  the definitions of $\ldr{a}$ and $\ldr{\rho(a)}$ and the
  compatibility of the bracket with the anchor. Then
  $\rho_\star^t\ldr{a}\theta=[a,
  \rho_\star^t\theta]_A-\ip{\rho^t\theta}\dr_{A^*}a$ for all
  $\theta\in\Omega^1(M)$ and $a\in\Gamma(A)$ is equivalent to Equation
  (11) in \cite[Lemma 12.1.8]{Mackenzie05}, hence always satisfied if
  $(A,A^*)$ is a Lie bialgebroid. Finally we have \begin{align*} &\lb
    \Phi(0,\theta), \Phi(0,\omega)\rb_{A\oplus A^*}
    =\lb (\rho_\star^t\theta,\rho^t\theta), (\rho_\star^t\omega,\rho^t\omega)\rb_{A\oplus A^*}\\
    =\,&\left([\rho_\star^t\theta,\rho_\star^t\omega]_A+\ldr{\rho^t\theta}\rho_\star^t\omega
      -\ip{\rho^t\omega}\dr_{A^*}\rho_\star^t\theta,
      [\rho^t\theta,\rho^t\omega]_{A^*}+\ldr{\rho_\star^t\theta}\rho^t\omega
      -\ip{\rho_\star^t\omega}\dr_{A}\rho^t\theta\right)\end{align*}
  and on the other hand $\Phi([(0,\theta),(0,\omega)]_d)=\Phi(0)=0$.  Hence, by symmetry of the Lie bialgebroid
  $(A,A^*)$, we only have to check that $
  [\rho_\star^t\theta,\rho_\star^t\omega]_A=-\ldr{\rho^t\theta}\rho_\star^t\omega
  +\ip{\rho^t\omega}\dr_{A^*}\rho_\star^t\theta $ for all
  $\theta,\omega\in\Omega^1(M)$. If $\omega=\dr f$ with some $f\in
  C^\infty(M)$, this is \cite[Lemma 12.1.5]{Mackenzie05}. To see this,
  note that
  $\rho^t_\star\dr((\rho\circ\rho_\star)^t\theta(f))=-\ldr{\rho^t\theta}(\dr_\star
  f)$.  The general case (with $\omega$ not necessarily exact) follows
  easily.

  Let us now quickly describe a Dorfman connection adapted to the
  Dirac algebroid $(A, A(D_\pi))$. Via the canonical isomorphism
  $TA\oplus T^*A\simeq A(TG\oplus T^*G)$, the Lie algebroid of the
  Dirac structure $D_\pi$ is $D_{\pi_A}$, where $\pi_A$ is the linear
  Poisson structure on $A$ that is equivalent to the Lie algebroid
  structure on $A^*$ \cite{MaXu00}. The sides of $D_{\pi_A}$ are $A$
  and $U=\operatorname{graph}(\rho_\star\colon A^*\to TM)$ and its
  core is $K=\operatorname{graph}(-\rho^t_\star\colon T^*M\to A)$.
  Choose any connection
  $\nabla\colon\mx(M)\times\Gamma(A)\to\Gamma(A)$, denote by
  $\nabla^*\colon\mx(M)\times\Gamma(A^*)\to\Gamma(A^*)$ the dual
  connection and construct the basic $A^*$-connection on $A^*$:
  $\nabla^{*\rm bas}\colon\Gamma(A^*)\times\Gamma(A^*)\to\Gamma(A^*)$,
  $\nabla^{*\rm
    bas}_{\alpha_1}\alpha_2=\nabla_{\rho_\star\alpha_2}\alpha_1+[\alpha_1,\alpha_2]_{A^*}$
  for all $\alpha_1,\alpha_2\in\Gamma(A^*)$.  The Dorfman connection
  $\Delta\colon \Gamma(TM\oplus A^*)\times\Gamma(A\oplus T^*M)\to
  \Gamma(A\oplus T^*M)$ defined by
\[\Delta_{(X,\alpha)}(a,\theta)=\left(\langle a, \nabla_\cdot^{*\rm
    bas}\alpha\rangle+\nabla_Xa-\rho_\star^t\langle \nabla^*_\cdot \alpha,
  a\rangle,
\ldr{X}\theta+ \langle \nabla^*_\cdot \alpha, a\rangle
\right)
\]
is adapted to $D_{\pi_A}$.
The corresponding dull bracket $\lb\cdot\,,\cdot\rb_\Delta$ on sections of $TM\oplus A^*$ is 
given by
\begin{multline}
\lb(X_1,\alpha_1), (X_2,\alpha_2)\rb_\Delta\\
=\left([X_1, X_2],
  \nabla_{X_1}\alpha_2-\nabla_{X_2}\alpha_1+\nabla_{\rho_\star(\alpha_2)}\alpha_1-\nabla_{\rho_\star(\alpha_1)}\alpha_2+[\alpha_1,\alpha_2]_{A^*},
\right)
\end{multline}
which restricts to
$ \lb (\rho_\star(\alpha_1), \alpha_1),  (\rho_\star(\alpha_2),
\alpha_2)\rb_\Delta=(\rho_\star[\alpha_1,\alpha_2]_{A^*}, [\alpha_1,\alpha_2]_{A^*})$
on sections of $U$. For more details, see \cite{Jotz13a}.

\subsection{Presymplectic groupoids}\label{ex_presymplectic}
Presymplectic groupoids are described infinitesimally in
\cite{BuCrWeZh04, BuCaOr09} by IM-2-forms:
  Let $(A\to M,[\cdot\,,\cdot],\rho)$ be a Lie algebroid. A vector
  bundle map $\sigma\colon A\to T^*M$ is an IM-2-form if
\begin{enumerate}
\item $\langle \rho(a_1),\sigma(a_2)\rangle=-\langle\rho(a_2),\sigma(a_1)\rangle$ and 
\item $\sigma[a_1,a_2]=\ldr{\rho(a_1)}\sigma(a_2)-\ip{\rho(a_2)}\dr\sigma(a_1)$
\end{enumerate}  
for all $a_1,a_2\in\Gamma(A)$.

We show that the Dirac bialgebroid corresponding to this example is
\linebreak
$(A,TM, (\operatorname{id}_{TM},\sigma^t)\colon TM\to TM\oplus A^*)$.
Consider a vector bundle morphism $\sigma\colon A\to T^*M$ on a Lie
algebroid $A\to M$. We want to define an $A$ Manin-pair
$(TM\oplus T^*M, TM)$, where $TM\oplus T^*M$ is the standard Courant
algebroid (see also \cite{Jotz13b}, where we check that $\mathsf B$ is
in this case isomorphic to $TM\oplus T^*M$), $TM$ is seen as
isomorphic to
$\operatorname{graph}(\sigma^t\colon TM\to A^*)\subseteq TM\oplus A^*$
and the map $\Phi\colon A\oplus T^*M\to TM\oplus T^*M$ is
$(a,\theta)\mapsto (\rho(a), \sigma(a)+\theta)$. We quickly check that
$\Phi$ is a morphism of degenerate Courant algebroids if and only if
$\sigma$ is an IM-2-form.

The map $\Phi$ is obviously compatible with the anchors. Choose
$(a_1,\theta_1),(a_2,\theta_2)\in\Gamma(A\oplus T^*M)$.  Then
  \begin{align*}
    \langle\Phi(a_1,\theta_1),\Phi(a_2,\theta_2)\rangle&=\langle(\rho(a_1),
    \sigma(a_1)+\theta_1), (\rho(a_2), \sigma(a_2)+\theta_2)\rangle\\
    &=\langle (a_1,\theta_1),
    (a_2,\theta_2)\rangle_d+\langle\sigma(a_1),\rho(a_2)\rangle+\langle\sigma(a_2),\rho(a_1)\rangle
\end{align*}
and 
\begin{align*}
&\lb \Phi(a_1,\theta_1), \Phi(a_2,\theta_2)\rb=\lb (\rho(a_1),
  \sigma(a_1)+\theta_1), (\rho(a_2), \sigma(a_2)+\theta_2)\rb\\
&=(\rho[a_1,a_2],\ldr{\rho(a_1)}(\sigma(a_2)+\theta_2)-\ip{\rho(a_2)}\dr(\sigma(a_1)+\theta_1))\\
&=\Phi[(a_1,\theta_1), (a_2,\theta_2)]_d+(0, \ldr{\rho(a_1)}\sigma(a_2)-\ip{\rho(a_2)}\dr\sigma(a_1)-\sigma[a_1,a_2]).
\end{align*}
Hence, $\Phi$ is a morphism of Courant algebroids if and only if
$\langle \rho(a_1),\sigma(a_2)\rangle=-\langle\rho(a_2),\sigma(a_1)\rangle$
and $\sigma[a_1,a_2]=\ldr{\rho(a_1)}\sigma(a_2)-\ip{\rho(a_2)}\dr\sigma(a_1)$
for all $a_1,a_2\in\Gamma(A)$.

The Dirac algebroid corresponding to a presymplectic groupoid $(G\rr
M, \omega)$ is $(A,A(D_\omega))\simeq (A, D_{\sigma^*\omega_{\rm
    can}})$, where $\omega_{\rm can}$ is the canonical symplectic
structure on $T^*M$. The vector bundles $U$ and $K$ over $M$
are $U=\operatorname{graph}(-\sigma^t\colon TM\to A^*)$ and 
$K=\operatorname{graph}(\sigma\colon A\to T^*M)$. The Dorfman 
connection $\Delta\colon  \Gamma(TM\oplus A^*)\times\Gamma(A\oplus T^*M)\to
\Gamma(A\oplus T^*M)$
defined by
\[\Delta_{(X,\alpha)}(a,\theta)=(\nabla_Xa,
\ldr{X}(\theta-\sigma(a))+\langle \nabla_\cdot^*(\sigma^tX+\alpha), a\rangle
+\sigma(\nabla_Xa)). 
\]
The dual dull bracket $\lb\cdot\,,\cdot\rb_\Delta$ on sections
of $TM\oplus A^*$ is given by 
\[\lb (X_1,\alpha_1), (X_2,\alpha_2)\rb_\Delta=([X_1,X_2], \nabla_{X_1}^*(\alpha_2+\sigma^tX_2)-\nabla_{X_2}^*(\alpha_1+\sigma^tX_1)-\sigma^t[X_1,X_2])
\]
and restricts to $ \lb (X_1,-\sigma^tX_1), (X_2,-\sigma^tX_2)\rb_\Delta=([X_1,X_2],
-\sigma^t[X_1, X_2])$
on sections of $U$.

\subsection{Multiplicative distributions}\label{ex_iis}
Multiplicative distributions on Lie groupoids are described
infinitesimally in \cite{Hawkins08,JoOr14} via infinitesimal ideal
systems:
\begin{definition}\label{def_inf_id_system}\cite{JoOr14}
  Let $(q\colon A\to M, \rho,[\cdot\,,\cdot])$ be a Lie algebroid,
  $F_M\subseteq TM$ an involutive subbundle, $J\subseteq A$ a
  subbundle such that $\rho(J)\subseteq F_M$ and $\nabla$ a flat
  partial $F_M$-connection on $A/J$ with the following
  properties\footnote{We say by abuse of notation that a section
    $a\in\Gamma(A)$ is $\nabla$-parallel if its class $\bar a$ in
    $\Gamma(A/J)$ is $\nabla$-parallel.}:
\begin{enumerate}
\item If $a\in\Gamma(A)$ is $\nabla$-parallel, then $[a,j]\in\Gamma(J)$ 
for all $j\in\Gamma(J)$.
\item If $a_1,a_2\in\Gamma(A)$ are $\nabla$-parallel, then $[a_1,a_2]$ is also
  $\nabla$-parallel.
\item If $a\in\Gamma(A)$ is  $\nabla$-parallel, then $\rho(a)$ is
  $\nabla^{F_M}$-parallel, where
  \[\nabla^{F_M}\colon \Gamma(F_M)\times\Gamma(TM/F_M)\to\Gamma(TM/F_M)\]
  is the Bott connection associated to $F_M$.
\end{enumerate}
The triple $(F_M,J,\nabla)$ is an infinitesimal ideal
system in $A$. 
\end{definition}
We prove the following theorem.
\begin{theorem}
  Let $(q\colon A\to M, \rho,[\cdot\,,\cdot])$ be a Lie algebroid,
  $F_M\subseteq TM$ an involutive subbundle, $J\subseteq A$ a
  subbundle such that $\rho(J)\subseteq F_M$ and $\tilde \nabla\colon
  \mx(M)\times\Gamma(A)\to\Gamma(A)$ be a connection such that
  $\tilde\nabla_Xj\in\Gamma(J)$ for all $j\in\Gamma(J)$ and
  $X\in\Gamma(F_M)$, and which thus defines a connection $\nabla\colon
  \Gamma(F_M)\times\Gamma(A/J)\to\Gamma(A/J)$.  Then
  $(A,F_M,J,\nabla)$ is an infinitesimal ideal system if and only if
  $(A, F_M\oplus J^\circ,\iota)$ is a Dirac bialgebroid, where
  $\iota\colon F_M\oplus J^\circ\hookrightarrow TM\oplus A^*$ is the
  inclusion and $F_M\oplus J^\circ$ has the anchor
  $\pr_{TM}$ and the bracket
\[[(X_1,\alpha_1),(X_2,\alpha_2)]_{F_M\oplus J^\circ}=([X_1,X_2], \tilde\nabla^*_{X_1}\alpha_2-\tilde\nabla^*_{X_2}\alpha_1),
\]
$X_1,X_2\in\Gamma(F_M)$, $\alpha_1,\alpha_2\in\Gamma(J^\circ)$.
\end{theorem}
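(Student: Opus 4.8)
The plan is to reduce the statement, via Theorem~\ref{inf_cor} and Theorem~\ref{morphic_Dirac_triples}, to a direct comparison of the axioms of an infinitesimal ideal system with the five conditions characterising LA-Dirac structures, applied to a conveniently chosen adapted Dorfman connection.

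First I would dispose of the linear-algebraic part. Put $U=F_M\oplus J^\circ\subseteq TM\oplus A^*$, $\iota$ the inclusion, $\rho_U=\pr_{TM}\circ\iota$. The annihilator of $U$ in $A\oplus T^*M$ is $K:=U^\circ=J\oplus F_M^\circ$, and $\rho(J)\subseteq F_M$ gives immediately $(\rho,\rho^t)(K)\subseteq U$. Since $F_M$ is involutive and $\tilde\nabla_XJ\subseteq J$ for $X\in\Gamma(F_M)$, the bracket $[\cdot\,,\cdot]_{F_M\oplus J^\circ}$ is $\R$-bilinear, skew, Leibniz with anchor $\pr_{TM}$, and takes values in $\Gamma(U)$; a cyclic computation shows its Jacobiator is a multiple of the curvature of $\nabla$, so $(U,\pr_{TM},[\cdot\,,\cdot]_{F_M\oplus J^\circ})$ is a Lie algebroid precisely when $\nabla$ is flat. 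Flatness is part of the definition of an infinitesimal ideal system, and is forced by the Dirac bialgebroid condition (which requires $U$ to be a Lie algebroid), so I may assume it throughout.

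Next I would fix an adapted Dorfman connection. The dull bracket $\lb(X_1,\alpha_1),(X_2,\alpha_2)\rb=([X_1,X_2],\tilde\nabla^*_{X_1}\alpha_2-\tilde\nabla^*_{X_2}\alpha_1)$ on $\Gamma(TM\oplus A^*)$ (for arbitrary $X_i\in\mx(M)$) extends $[\cdot\,,\cdot]_{F_M\oplus J^\circ}$ and is dual to
\[\Delta_{(X,\alpha)}(a,\theta)=\bigl(\tilde\nabla_Xa,\ \ldr{X}\theta+\langle\tilde\nabla^*_\cdot\alpha,a\rangle\bigr),\]
which I would check is a Dorfman connection in the sense of Definition~\ref{the_def} and whose dual dull bracket restricts on $\Gamma(U)$ to $[\cdot\,,\cdot]_{F_M\oplus J^\circ}$. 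Thus $\Delta$ is adapted to $D_{(U,U^\circ,\Delta)}\subseteq TA\oplus T^*A$, and by Theorems~\ref{morphic_Dirac_triples} and~\ref{inf_cor} the triple $(A,F_M\oplus J^\circ,\iota)$ is a Dirac bialgebroid if and only if conditions~(1)--(5) of Theorem~\ref{morphic_Dirac_triples} hold for $(U,U^\circ,\Delta)$. Conditions~(1) and~(2) are the facts above; condition~(3) is flatness of $\nabla$, now in force; so everything comes down to matching conditions~(4) and~(5) with the remaining IIS axioms.

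For~(4) I would compute $\nabla^{\rm bas}_a(X,\alpha)$ from \eqref{nabla_bas}: its $TM$-part is $[\rho(a),X]+\rho(\tilde\nabla_Xa)$ and its $A^*$-part pairs with $j\in\Gamma(J)$ to $-\langle\alpha,\tilde\nabla_{\rho(j)}a+[a,j]\rangle$. Reducing modulo $F_M$, resp.\ $J$, and using that $J$ has a local frame of $\nabla$-parallel sections (flatness plus involutivity of $F_M$), the $TM$-part lies in $\Gamma(F_M)$ for all $u\in\Gamma(U)$ iff $\overline{\rho(a)}$ is $\nabla^{F_M}$-parallel whenever $\bar a$ is $\nabla$-parallel (IIS~(3)), and the $A^*$-part lies in $\Gamma(J^\circ)$ iff $[a,j]\in\Gamma(J)$ for $j\in\Gamma(J)$ whenever $\bar a$ is $\nabla$-parallel (IIS~(1); the special case $\bar a=0$ already yields $[J,J]\subseteq J$). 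For~(5), the key observation is that the $A$-component of $R^{\rm bas}_\Delta(a_1,a_2)(X,\alpha)$ is exactly the ordinary basic curvature $R^{\rm bas}_{\tilde\nabla}(a_1,a_2)X$ of $\tilde\nabla$ on $A$; for $\nabla$-parallel $a_1,a_2$ and $X\in\Gamma(F_M)$ every term of $R^{\rm bas}_{\tilde\nabla}(a_1,a_2)X$ except $-\tilde\nabla_X[a_1,a_2]$ already lies in $\Gamma(J)$ by IIS~(1) and~(3), so this component lies in $\Gamma(J)$ exactly when $\overline{[a_1,a_2]}$ is $\nabla$-parallel, i.e.\ IIS~(2); the $T^*M$-component lies in $F_M^\circ$ by the analogous parallel-frame argument once IIS~(1)--(3) are known. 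Assembling these equivalences, conditions~(1)--(5) hold iff $\nabla$ is flat and IIS~(1)--(3) hold, i.e.\ iff $(A,F_M,J,\nabla)$ is an infinitesimal ideal system, which proves the theorem. (One can also verify directly that the ambient Courant algebroid of \eqref{C_ident} is then $\bar A\oplus\bar A^*$ with $\bar A=(F_M\oplus A)/\graf(-\rho\an J\colon J\to F_M)$ carrying its induced Lie algebroid structure and $\bar A^*$ the trivial one.) The main obstacle is condition~(5): extracting the basic curvature of $\tilde\nabla$ from $R^{\rm bas}_\Delta$ and verifying that its $T^*M$-component introduces no condition beyond IIS~(1)--(3).
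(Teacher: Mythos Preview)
Your approach is correct and genuinely different from the paper's. You reduce the statement, via Theorems~\ref{inf_cor} and~\ref{morphic_Dirac_triples}, to a direct comparison between the five LA-Dirac conditions for the Dorfman connection $\Delta_{(X,\alpha)}(a,\theta)=(\tilde\nabla_Xa,\ldr X\theta+\langle\tilde\nabla^*_\cdot\alpha,a\rangle)$ and the axioms of an infinitesimal ideal system. The paper instead proceeds constructively: it first builds the Lie algebroid $\bar A=(F_M\oplus A)/\graf(-\rho\an J)$, shows (leaving details to the reader) that $\bar A$ is a Lie algebroid precisely when $(A,F_M,J,\nabla)$ is an infinitesimal ideal system, and then verifies by hand that $(\bar A\oplus\bar A^*,\,F_M\oplus J^\circ)$ with the explicit map $\Phi(a,\theta)=(0\oplus a,(\bar\theta,\rho^t\theta))$ is an $A$-Manin pair. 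Your route applies the paper's own machinery and treats both implications symmetrically; the paper's route is more hands-on but yields the new quotient Lie algebroid $\bar A$ as a by-product of independent interest. Note that the paper also records, at the very end of \S\ref{ex_iis}, exactly the Dorfman connection you use and checks that its dual bracket restricts correctly to $\Gamma(F_M\oplus J^\circ)$, so the two arguments connect there.

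Two small points. First, a slip of the pen: it is $A/J$ (not $J$) that admits a local frame of $\nabla$-parallel sections; you then lift these and complete by a local frame of $J$ to obtain a local frame of $A$ consisting of $\nabla$-parallel sections, which is what your reductions actually use. Second, your treatment of condition~(5) is the one place where the argument is only sketched. The $A$-component identification $\pr_A\!\left(R_\Delta^{\rm bas}(a_1,a_2)(X,\alpha)\right)=R_{\tilde\nabla}^{\rm bas}(a_1,a_2)X$ is correct and is the heart of the matter, and your parallel-frame reduction then gives the equivalence with the $\nabla$-parallel closure of the bracket. For the $T^*M$-component you assert that it lies in $F_M^\circ$ once the other axioms hold; this is true, but deserves an explicit check rather than an appeal to analogy. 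One clean way is to observe that the paper's own alternative characterisation of infinitesimal ideal systems (quoted just before the construction of $\bar A$, from \cite{DrJoOr15}) has no separate cotangent condition: it consists precisely of flatness, $\nabla^{\rm bas}_aF_M\subseteq F_M$, $\nabla^{\rm bas}_aJ\subseteq J$, and $R_{\tilde\nabla}^{\rm bas}(a_1,a_2)F_M\subseteq J$. Since these are exactly your conditions (3), the $TM$- and $A^*$-parts of (4), and the $A$-part of (5), the $T^*M$-part of (5) must be redundant; alternatively, carry out the pairing of the $T^*M$-component with an arbitrary $Y\in\Gamma(F_M)$ using $\nabla$-parallel $a_1,a_2$ and verify directly that every term vanishes.
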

First note that $F_M\oplus J^\circ$ is a Lie algebroid with this
structure if and only if $\nabla$ is flat.  To prove this theorem, we
construct a Manin pair associated to the infinitesimal ideal
system.

\medskip We have shown in \cite{JoOr14} that if the quotients are
smooth and $\nabla$ has no holonomy, there is an induced Lie algebroid
structure on $(A/J)/\nabla\to M/F_M$ such that the canonical
projection $A\to (A/J)/\nabla$ over $M\to M/F_M$ is a fibration of Lie
algebroids. In the case of a Lie algebra $\lie g$, an ideal system is
just an ideal $\lie i$, and this quotient is the usual quotient of the
Lie algebra by its ideal $\lie i$.  We show that there is an
alternative construction of a quotient Lie algebroid, that simplifies
as well to the usual quotient $\lie g/\lie i$ in the Lie algebra case.

The paper \cite{DrJoOr15} shows that an infinitesimal ideal system can
alternatively be defined as follows.  Let $A$ be a Lie algebroid over
the base $M$, $J$ a subbundle of $A$ and $F_M$ an involutive subbundle
of $TM$ such that $\rho(J)\subseteq F_M$. Let $\tilde\nabla\colon
\mx(M)\times\Gamma(A)\to\Gamma(A)$ be a connection such that
$\tilde\nabla_Xj\in\Gamma(J)$ for all $j\in\Gamma(J)$ and
$X\in\Gamma(F_M)$, and which thus defines a connection $\nabla\colon
\Gamma(F_M)\times\Gamma(A/J)\to\Gamma(A/J)$.\footnote{Conversely,
  given $\nabla$, there always exists a connection $\tilde\nabla$
  projecting in this manner to $\nabla$. We say that $\tilde\nabla$
  is an \emph{extension} of $\nabla$.}  The triple $(F_M,J,\nabla)$ is
an infinitesimal ideal system if
\begin{enumerate}
\item $\nabla$ is flat,
\item $\nabla^{\rm bas}_aX\in\Gamma(F_M)$,
\item $\nabla^{\rm bas}_aj\in\Gamma(J)$ and
\item $R_{\tilde\nabla}^{\rm bas}(a_1,a_2)(X)\in\Gamma(F_M)$
\end{enumerate}
for all $X\in\Gamma(F_M)$, $a_1,a_2\in\Gamma(A)$ and $j\in\Gamma(J)$,
where the two connections $\nabla^{\rm bas}$ and the tensor
$R_{\tilde\nabla}^{\rm bas}$ are defined by $\tilde\nabla$ as in
Example~\ref{tangent_double_2_reps}.

Now consider the direct sum $F_M\oplus A$ of vector bundles over
$M$. Since the anchor $\rho$ restricts to a map $\rho\an{J}\colon J\to
F_M$, the vector bundle
\[\bar A:=\frac{F_M\oplus A}{\operatorname{graph}(-\rho\an{J})}\to M,\]
inherits the anchor $\bar \rho(X\oplus a)=X+\rho(a)$. (Given
$X\in\Gamma(F_M)$ and $a\in\Gamma(A)$, we write $X\oplus a$ for the
class of $(X,a)$ in $\bar A$.)  Define $[\cdot\,,\cdot]_{\bar
  A}\colon\Gamma(\bar A)\times\Gamma(\bar A)\to\Gamma(\bar A)$ by
\begin{equation*} [X_1\oplus a_1, X_2\oplus a_2]_{\bar A}=([X_1,X_2]+\nabla_{a_1}^{\rm
    bas}X_2-\nabla^{\rm
    bas}_{a_2}X_1)\oplus([a_1,a_2]+\tilde\nabla_{X_1}a_2-\tilde\nabla_{X_2}a_1).
\end{equation*}
This is skew-symmetric and well-defined because $[X\oplus a,
(-\rho(j))\oplus j]_{\bar A}=(-\rho(\tilde\nabla_Xj+\nabla_a^{\rm
  bas}j))\oplus(\tilde\nabla_Xj+\nabla_a^{\rm bas}j)$ and
$\tilde\nabla_Xj+\nabla_a^{\rm bas}j\in\Gamma(J)$ for all
$j\in\Gamma(J)$.  It is easy to check that it is compatible with the
anchor, i.e.~that is satisfies the Leibniz identity.  To see that the Jacobi
identity is satisfied, one can check that
\begin{align*}
  &[[ X_1\oplus a_1, X_2\oplus a_2]_{\bar A}, X_3\oplus a_3]_{\bar
    A}+[[ X_2\oplus a_2, X_3\oplus a_3]_{\bar A}, X_1\oplus a_1
  ]_{\bar A}\\
  &\hspace*{5cm}+[[ X_3\oplus a_3, X_1\oplus a_1]_{\bar A}, X_2\oplus
  a_2]_{\bar A}=(-\rho(j))\oplus j,
\end{align*}
with
\begin{align*}
  j:=\left[R_{\tilde\nabla}^{\rm
      bas}(a_1,a_2)X_3-R_{\tilde\nabla}(X_1, X_2) a_3\right]+\rm
  c.p. \in\Gamma(J).
\end{align*}
The maps $ A\to \bar A$, $a\mapsto 0\oplus a$ and $F_M\to \bar A$,
$X\mapsto X\oplus 0$ are easily seen to be morphisms of Lie algebroids
over the identity.  Note that the second map is injective.  We let the
reader complete the proof of the following theorem. 
\begin{theorem}
Let $(q\colon A\to M, \rho,[\cdot\,,\cdot])$ be a Lie algebroid,
  $F_M\subseteq TM$ an involutive subbundle, $J\subseteq A$ a
  subbundle and $\nabla$ a
  partial $F_M$-connection on $A/J$. Consider any extension
$\tilde\nabla:\mx(M)\times\Gamma(A)\to\Gamma(A)$ of $\nabla$.
Then $(A,F_M,J^\circ,\nabla)$ is an infinitesimal ideal system if and only if 
\[\bar A=\frac{F_M\oplus A}{\operatorname{graph}(-\rho\an{J})}\to M\]
with the anchor $\bar \rho(X\oplus a)=X+\rho(a)$ and the bracket
$[X_1\oplus a_1, X_2\oplus a_2]_{\bar A}=([X_1,X_2]+\nabla_{a_1}^{\rm
  bas}X_2-\nabla^{\rm
  bas}_{a_2}X_1)\oplus([a_1,a_2]+\tilde\nabla_{X_1}a_2-\tilde\nabla_{X_2}a_1)$
is a Lie algebroid, which does not
depend on the choice of the extension $\tilde\nabla$ of the connection $\nabla$.
\end{theorem}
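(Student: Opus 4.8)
The plan is to verify the Lie algebroid axioms for $(\bar A,\bar\rho,[\cdot\,,\cdot]_{\bar A})$ one at a time, organising the bookkeeping so that each axiom matches one of the four conditions of the alternative description of an infinitesimal ideal system recalled above (from \cite{DrJoOr15}), and then to settle the independence statement by a direct comparison of two extensions of $\nabla$.

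First I would observe that most of the bracket structure is already in place. The displayed formula for $[\cdot\,,\cdot]_{\bar A}$ is manifestly skew-symmetric, and the computation carried out above shows $[X\oplus a,(-\rho(j))\oplus j]_{\bar A}=(-\rho(w))\oplus w$ with $w=\tilde\nabla_Xj+\nabla^{\rm bas}_aj$; since $\tilde\nabla_Xj\in\Gamma(J)$ because $\tilde\nabla$ extends $\nabla$, the bracket descends to $\bar A$ if and only if $\nabla^{\rm bas}_aj\in\Gamma(J)$ for all $a\in\Gamma(A)$ and $j\in\Gamma(J)$, that is, condition (3). Likewise, for the first component $[X_1,X_2]+\nabla^{\rm bas}_{a_1}X_2-\nabla^{\rm bas}_{a_2}X_1$ of the bracket to admit a representative inside $F_M\oplus A$ one needs, taking $X_1=0$ and using the involutivity of $F_M$ together with $\rho(J)\subseteq F_M$, that $\nabla^{\rm bas}_aX\in\Gamma(F_M)$ for all $a\in\Gamma(A)$ and $X\in\Gamma(F_M)$, that is, condition (2). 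So the very existence of $\bar\rho$ and $[\cdot\,,\cdot]_{\bar A}$ on $\bar A$ is equivalent to (2) and (3); granting these, the Leibniz rule and the compatibility $\bar\rho([\cdot\,,\cdot]_{\bar A})=[\bar\rho(\cdot),\bar\rho(\cdot)]$ follow from the routine manipulation already flagged as ``easy'', using $\nabla^{\rm bas}_aX=[\rho(a),X]+\rho(\tilde\nabla_Xa)$ and $\rho[a,a']=[\rho(a),\rho(a')]$, and impose no new constraint.

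The core of the argument, and the step I expect to be the main obstacle, is the Jacobi identity. Here the plan is to expand the cyclic sum of double brackets on representatives $X_i\oplus a_i$, cancel the ``pure'' terms (which vanish by the Jacobi identities on $A$ and on $\mx(M)$), and reorganise the remaining mixed terms using the definitions of the basic connections $\nabla^{\rm bas}$ and the tensors $R_{\tilde\nabla}$, $R^{\rm bas}_{\tilde\nabla}$ from Example~\ref{tangent_double_2_reps}. This long but mechanical computation should reproduce the identity asserted above, namely that the Jacobiator equals $(-\rho(j))\oplus j$ where $j$ is the cyclic sum of $R^{\rm bas}_{\tilde\nabla}(a_1,a_2)X_3-R_{\tilde\nabla}(X_1,X_2)a_3$. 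Hence $\bar A$ is a Lie algebroid exactly when $j\in\Gamma(J)$ for all choices of arguments: setting $X_1=X_2=0$ and $a_3=0$ isolates the summand $R^{\rm bas}_{\tilde\nabla}(a_1,a_2)X_3$ and yields condition (4), while setting $a_1=a_2=0$ and $X_3=0$ isolates $R_{\tilde\nabla}(X_1,X_2)a_3$, and $R_{\tilde\nabla}(X_1,X_2)a_3\in\Gamma(J)$ for all $X_1,X_2\in\Gamma(F_M)$ is precisely flatness of the induced connection $\nabla$ on $A/J$, condition (1). Conversely, (1) and (4) together make every summand of $j$ a section of $J$. Combining this with the previous paragraph gives the equivalence between ``$\bar A$ is a Lie algebroid'', the four conditions (1)--(4), and $(A,F_M,J,\nabla)$ being an infinitesimal ideal system.

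Finally, for independence of the extension: given a second connection $\tilde\nabla'$ extending $\nabla$, one has $\tilde\nabla_Xa-\tilde\nabla'_Xa\in\Gamma(J)$ for $X\in\Gamma(F_M)$, since both sides project to $\nabla_X\bar a$ in $A/J$. Writing $p_1=\tilde\nabla_{X_2}a_1-\tilde\nabla'_{X_2}a_1$ and $p_2=\tilde\nabla_{X_1}a_2-\tilde\nabla'_{X_1}a_2$ (both in $\Gamma(J)$), and again using $\nabla^{\rm bas}_aX=[\rho(a),X]+\rho(\tilde\nabla_Xa)$, the difference of the two bracket formulas on $X_1\oplus a_1,X_2\oplus a_2$ works out to $(-\rho(p_2-p_1))\oplus(p_2-p_1)$, which lies in $\graf(-\rho\an{J})$ and so vanishes in $\bar A$; and $\bar\rho$ involves no connection. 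Thus the Lie algebroid structure on $\bar A$ is the same for every extension of $\nabla$, which would give the last assertion of the theorem.
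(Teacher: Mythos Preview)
Your proposal is correct and follows exactly the approach the paper itself takes: the paper computes the well-definedness of the bracket and the Jacobiator formula $(-\rho(j))\oplus j$ with $j=[R_{\tilde\nabla}^{\rm bas}(a_1,a_2)X_3-R_{\tilde\nabla}(X_1,X_2)a_3]+\text{c.p.}$, and then explicitly writes ``We let the reader complete the proof of the following theorem''. Your plan of matching conditions (2)--(3) to well-definedness, isolating (1) and (4) from the Jacobiator by specialising arguments, and checking independence of $\tilde\nabla$ via the difference landing in $\graf(-\rho\an{J})$, is precisely that completion.
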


Consider now the Courant algebroid $\bar A\oplus \bar A^*$ defined by
the trivial Lie bialgebroid $(\bar A, \bar A^*)$, i.e.~with the
trivial Lie algebroid structure on $\bar A^*$. The vector bundle $\bar
A^*$ can be described as follows: sections of $\bar A^*$ are pairs
$(\bar\theta,\rho^t\theta+\alpha)\in\Gamma(F_M^*\oplus A^*)$ with
$\theta\in\Omega^1(M)$ and $\alpha\in\Gamma(J^\circ)$. Here,
$\bar\theta$ is the class of $\theta$ in $T^*M/F_M^\circ\simeq F_M^*$.
Hence, there is a natural inclusion of $J^\circ$ in $\bar A^*$, and so
a natural inclusion $i$ of $F_M\oplus J^\circ$ in $\bar A\oplus \bar A^*$.
Elements of $i(F_M\oplus J^\circ)\subseteq \bar A\oplus \bar A^*$ can be
written $(X\oplus 0, (0,\alpha))$ with $X\in F_M$ and $\alpha\in J^\circ$.
It is easy to see that $F_M\oplus J^\circ$ is isotropic in $\bar
A\oplus \bar A^*$. To see that it is maximal isotropic, take $(X'\oplus
a,(\bar\theta, \rho^t\theta+\alpha'))\in \bar A\oplus \bar A^*$ such that
$\langle (X' \oplus a,(\bar\theta, \rho^t\theta+\alpha')), (X\oplus 0,
(0,\alpha))\rangle=0$ for all $(X\oplus 0, (0,\alpha))\in F_M\oplus
J^\circ$. Since this is $\theta(X)+\alpha(a)$, we find that $a\in J$ and
$\theta\in F_M^\circ$. Hence, $(X'\oplus a,(\bar\theta,
\rho^t\theta+\alpha'))= \bigl((X'+\rho(a))\oplus 0,(0,
\rho^t\theta+\alpha'))\in F_M\oplus J^\circ$ (recall that
$\rho(J)\subseteq F_M$ and so $\rho^t(F_M^\circ)\subseteq J^\circ$).
The Courant bracket of $(X_1\oplus0, (0,\alpha_1))$ and $(X_2\oplus0,
(0,\alpha_2))\in F_M\oplus J^\circ$ equals
\[\lb (X_1\oplus0, (0,\alpha_1)), (X_2\oplus0, (0,\alpha_2))\rb_{\bar A\oplus\bar
  A^*}=([X_1,X_2]\oplus0, (0,\tilde\nabla_{X_1}^*\alpha_2-\tilde\nabla^*_{X_2}\alpha_1)),
\]
which is again an element of $F_M\oplus J^\circ$. This shows that
$F_M\oplus J^\circ$ is a Dirac structure in $\bar A\oplus\bar A^*$. 

Next we define the map $\Phi\colon  A\oplus T^*M\to \bar A\oplus \bar A^*$,
$\Phi(a,\theta)=(0\oplus a, (\bar\theta,\rho^t\theta))$. The equality
$i(F_M\oplus J^\circ)+ \Phi(A\oplus T^*M)=\bar A\oplus \bar A^*$
is immediate, as well as the equality $\langle
i(X,\alpha),\Phi(a,\theta)\rangle=\langle (X\oplus0,(0,\alpha)),
(0\oplus a,(\bar\theta,\rho^t\theta))\rangle=\theta(X)+\alpha(a)$ for all
$(X,\alpha)\in F_M\oplus J^\circ$ and $(a,\theta)\in A\oplus T^*M$. To
see that $\Phi$ is a morphism of (degenerate) Courant algebroids, note
first that
\[\ldr{0\oplus a}(\bar\theta,\rho^t\theta)=\left(\overline{\ldr{\rho(a)}\theta}, \rho^t(\ldr{\rho(a)}\theta)\right)\]
and
\[\ip{0\oplus a}\dr_{\bar A}(\bar\theta,\rho^t\theta)=\left(\overline{\ip{\rho(a)}\dr\theta}, \rho^t(\ip{\rho(a)}\dr\theta)\right)\]
for $a\in\Gamma(A)$ and $\theta\in\Omega^1(M)$.  Then we can compute
\begin{align*}
  \lb \Phi(a,\theta), \Phi(b,\omega)\rb_{\bar A\oplus \bar A^*}&=\lb
  (0\oplus a, (\bar\theta,\rho^t\theta)), (0\oplus b,
  (\bar\omega,\rho^t\omega))\rb_{\bar A\oplus \bar A^*}\\
  &=\left(0\oplus[a,b],(\overline{\ldr{\rho(a)}\omega-\ip{\rho(b)}\dr\theta}, \rho^t(\ldr{\rho(a)}\omega-\ip{\rho(b)}\dr\theta))\right)\\
  &=\Phi([(a,\theta),(b,\omega)]_d).
\end{align*}
The compatibility of $\Phi$ with the anchors is immediate and
compatibility with the pairing is checked as follows:
\begin{align*}
  \langle \Phi(a,\theta), \Phi(b,\omega)\rangle
  &=\langle (0\oplus a, (\rho^t\theta,\bar\theta)), (0\oplus b, (\rho^t\omega,\bar\omega))\rangle=\theta(\rho(b))+\omega(\rho(a))\\
  &=\langle (a,\theta), (\rho,\rho^t)(b,\omega)\rangle=\langle
  (a,\theta), (b,\omega)\rangle_d.
\end{align*}
We have hence shown that $(\bar A\oplus\bar A^*, i(F_M\oplus J^\circ))$ is an
$A$-Manin pair.

The Dirac algebroid corresponding to a Lie groupoid with a
multiplicative distribution $(G\rr M, F\oplus F^\circ)$ is $(A, A(
F\oplus F^\circ))\simeq (A, F_A\oplus F_A^\circ)$, with $F_A\simeq
A(F)$. This double vector subbundle of $TA$ has sides $A$ and
$F_M\subseteq TM$, and core $J\subseteq A$. Choose any connection
$\nabla\colon \mx(M)\times\Gamma(A)\to\Gamma(A)$ that is adapted to
$F_A$,
i.e.~$T_ma(v_m)-\left.\frac{d}{dt}\right\an{t=0}a(m)+t\nabla_{v_m}a\in
F_A(a(m))$ for all $v_m\in F_M$ and $a\in\Gamma(A)$ (see
\cite{JoOr14}), and consequently $\nabla_Xj\in\Gamma(J)$ for all
$X\in\Gamma(F_M)$ and $j\in\Gamma(J)$.  Then the Dorfman connection
$\Delta\colon\Gamma(TM\oplus A^*)\times\Gamma(A\oplus
T^*M)\to\Gamma(A\oplus T^*M)$,
\[\Delta_{(X,\alpha)}(a,\theta)=(\nabla_Xa, \ldr{X}\theta+\langle
\nabla_\cdot^*\alpha, a\rangle)
\]
is adapted to $F_A\oplus F_A^\circ$ and the dual dull bracket given by 
$\lb (X_1,\alpha_1), (X_2,\alpha_2)\rb=([X_1,X_2], \nabla^*_{X_1}\alpha_2-\nabla^*_{X_2}\alpha_1)
$
restricts to the Lie algebroid bracket found above on sections of $F_M\oplus J^\circ$.

\appendix

\section{Proof of Theorem~\ref{Courant_algebroid}}\label{proof_courant}
Note that the equality 
\begin{equation}\label{intertwine_bas}
\nabla_a^{\rm bas}\circ(\rho,\rho^t)=(\rho,\rho^t)\circ\nabla_a^{\rm bas}
\end{equation}
is easily verified. 
The connection $\nabla_a^{\rm
  bas}\colon\Gamma(A)\times\Gamma(A\oplus T^*M)\to \Gamma(A\oplus
T^*M)$ defines a connection $\nabla_a^{\rm bas}\colon\Gamma(A\oplus
T^*M)\times\Gamma(A\oplus T^*M)\to \Gamma(A\oplus T^*M)$, $\nabla^{\rm
  bas}_\tau\tau'=\nabla_{\pr_A(\tau)}^{\rm bas}\tau'$. Recall also
\eqref{useful}
$\Delta_{(X,\alpha)}(a,\theta)=\Delta_u(a,0)+(0,\ldr{X}\theta)$ for
all $(X,\alpha)\in\Gamma(TM\oplus A^*)$ and
$(a,\theta)\in\Gamma(A\oplus T^*M)$.
First we prove a few lemmas.
\begin{lemma}\label{basic_like_eq}
The equality
\begin{equation}\label{basicLike_eqq}
[\tau_1,\tau_2]_d=\Delta_{(\rho,\rho^t)(\tau_1)}\tau_2-\nabla_{\tau_2}^{\rm bas}\tau_1
\end{equation}
holds for all $\tau_1,\tau_2\in\Gamma(A\oplus T^*M)$.
\end{lemma}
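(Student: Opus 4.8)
The plan is to prove \eqref{basicLike_eqq} by a direct computation on sections, using that every section of $A\oplus T^*M$ is globally of the form $(a,\theta)$ with $a\in\Gamma(A)$ and $\theta\in\Omega^1(M)$; so it suffices to fix $\tau_1=(a_1,\theta_1)$ and $\tau_2=(a_2,\theta_2)$ and compare the two sides componentwise in $A$ and in $T^*M$. Since $\nabla^{\rm bas}_{\tau_2}\tau_1=\nabla^{\rm bas}_{\pr_A(\tau_2)}\tau_1=\nabla^{\rm bas}_{a_2}\tau_1$, the right-hand side only involves the basic connection on the core, for which I will use its defining formula $\nabla^{\rm bas}_{a_2}(a_1,\theta_1)=\Omega_{(\rho,\rho^t)(a_1,\theta_1)}a_2+\ldr{a_2}(a_1,\theta_1)$ together with \eqref{ldr_a}.

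First I would unwind the term $\Delta_{(\rho,\rho^t)(\tau_1)}\tau_2$. Writing $u:=(\rho,\rho^t)\tau_1=(\rho(a_1),\rho^t\theta_1)\in\Gamma(TM\oplus A^*)$, equation \eqref{useful} splits off the one-form part, $\Delta_{u}\tau_2=\Delta_{u}(a_2,0)+(0,\ldr{\rho(a_1)}\theta_2)$, and the definition \eqref{omega} of $\Omega$ rewrites $\Delta_{u}(a_2,0)=\Omega_{u}a_2+(0,\dr\langle\rho^t\theta_1,a_2\rangle)=\Omega_{u}a_2+(0,\dr(\theta_1(\rho(a_2))))$. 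Subtracting $\nabla^{\rm bas}_{a_2}\tau_1=\Omega_{u}a_2+([a_2,a_1],\ldr{\rho(a_2)}\theta_1)$, the two copies of $\Omega_{u}a_2$ cancel and the right-hand side collapses to
\[
\bigl([a_1,a_2],\ \dr(\theta_1(\rho(a_2)))+\ldr{\rho(a_1)}\theta_2-\ldr{\rho(a_2)}\theta_1\bigr).
\]
Comparing this with the definition $[\tau_1,\tau_2]_d=([a_1,a_2],\ \ldr{\rho(a_1)}\theta_2-\ip{\rho(a_2)}\dr\theta_1)$, the proof is reduced to the identity $\dr(\theta_1(\rho(a_2)))-\ldr{\rho(a_2)}\theta_1=-\ip{\rho(a_2)}\dr\theta_1$, which is exactly Cartan's magic formula $\ldr{X}=\dr\ip{X}+\ip{X}\dr$ applied to $X=\rho(a_2)$ and $\theta_1$.

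I do not expect a genuine obstacle here: this is a bookkeeping lemma. The only two points needing a little care are (i) substituting $u=(\rho,\rho^t)\tau_1$ correctly into \eqref{omega}, so that the pairing there becomes $\langle\rho^t\theta_1,a_2\rangle=\theta_1(\rho(a_2))$ and produces precisely the exact term that Cartan's formula will absorb, and (ii) keeping the $A$- and $T^*M$-components of $\ldr{a_2}(a_1,\theta_1)$ and of $[\tau_1,\tau_2]_d$ straight. I would isolate this lemma precisely because, together with its $U$-side counterpart (Lemma~\ref{eq_for_morphism} and the surrounding identities), it is what makes the bracket in Theorem~\ref{Courant_algebroid} manifestly independent of the chosen extension $\Delta$.
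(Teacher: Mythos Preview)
Your proof is correct and follows essentially the same direct computation as the paper's: both expand $\nabla^{\rm bas}_{a_2}\tau_1=\Omega_{(\rho,\rho^t)\tau_1}a_2+\ldr{a_2}\tau_1$, use \eqref{useful} and \eqref{omega} to unpack $\Delta_{(\rho,\rho^t)\tau_1}\tau_2$, and then let the $\Omega$-terms cancel, with Cartan's formula accounting for the remaining one-form identity. The only difference is cosmetic---the paper computes $\nabla^{\rm bas}_{\tau_2}\tau_1$ and recognises it as $\Delta_{(\rho,\rho^t)\tau_1}\tau_2-[\tau_1,\tau_2]_d$, whereas you compute the full right-hand side and compare with $[\tau_1,\tau_2]_d$.
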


\begin{proof}Write $\tau_1=(a_1,\theta_1)$ and $\tau_2=(a_2,\theta_2)\in\Gamma(A\oplus T^*M)$.
Then:
\begin{align*}
\nabla_{\tau_2}^{\rm bas}\tau_1&=\nabla_{a_2}^{\rm bas}\tau_1=\Omega_{(\rho,\rho^t)\tau_1}a_2+\ldr{a_2}\tau_1\\
&=\Delta_{(\rho,\rho^t)\tau_1}\tau_2-(0,\dr\langle\theta_1,\rho(a_2)\rangle)-(0,\ldr{\rho(a_1)}\theta_2)+([a_2,a_1], \ldr{\rho(a_2)}\theta_1)\\
&=\Delta_{(\rho,\rho^t)\tau_1}\tau_2-[\tau_1, \tau_2]_d.\qedhere
\end{align*}
\end{proof}
\begin{lemma}\label{complicated_eq}
  Let $A$ be a Lie algebroid and choose a Dorfman connection
  $\Delta\colon \Gamma(TM\oplus A^*)\times\Gamma(A\oplus
  T^*M)\to\Gamma(A\oplus T^*M)$.  Then, for all $\nu\in\Gamma(TM\oplus
  A^*)$ and $\tau,\tau'\in\Gamma(A\oplus T^*M)$:
\begin{equation}\label{complicated_eqq}
  \langle (\rho,\rho^t)\Delta_{\nu}\tau-\lb \nu,(\rho,\rho^t)\tau\rb_\Delta-\nabla_{\tau}^{\rm bas}\nu,\tau'\rangle=\langle\nabla_{\tau'}^{\rm bas}\nu, \tau\rangle.
\end{equation}
\end{lemma}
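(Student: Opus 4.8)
The plan is to reduce \eqref{complicated_eqq} to a relation that is symmetric in $\tau$ and $\tau'$, and then to obtain that relation from \eqref{not_dual} together with Lemma~\ref{basic_like_eq}. Write $\tau=(a,\theta)$ and $\tau'=(a',\theta')$, so that $\nabla^{\rm bas}_\tau\nu=\nabla^{\rm bas}_a\nu$, and recall that the dull bracket is dual to $\Delta$ in the sense that $\langle\lb\nu_1,\nu_2\rb_\Delta,\sigma\rangle=\pr_{TM}(\nu_1)\langle\nu_2,\sigma\rangle-\langle\nu_2,\Delta_{\nu_1}\sigma\rangle$ for all $\nu_1,\nu_2\in\Gamma(TM\oplus A^*)$ and $\sigma\in\Gamma(A\oplus T^*M)$. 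I will also use the elementary identity $\langle(\rho,\rho^t)\mu,\sigma\rangle=\langle\mu,\sigma\rangle_d=\langle(\rho,\rho^t)\sigma,\mu\rangle$ for $\mu,\sigma\in\Gamma(A\oplus T^*M)$, which is immediate from the definitions.

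First I would rewrite the term $\langle(\rho,\rho^t)\Delta_\nu\tau,\tau'\rangle$: by the symmetry just mentioned it equals $\langle(\rho,\rho^t)\tau',\Delta_\nu\tau\rangle$, and the duality relation turns this into $\pr_{TM}(\nu)\langle\tau,\tau'\rangle_d-\langle\lb\nu,(\rho,\rho^t)\tau'\rb_\Delta,\tau\rangle$ (using $\langle(\rho,\rho^t)\tau',\tau\rangle=\langle\tau,\tau'\rangle_d$). Substituting this back, \eqref{complicated_eqq} becomes equivalent to
\begin{multline*}
\pr_{TM}(\nu)\langle\tau,\tau'\rangle_d-\langle\lb\nu,(\rho,\rho^t)\tau'\rb_\Delta,\tau\rangle-\langle\lb\nu,(\rho,\rho^t)\tau\rb_\Delta,\tau'\rangle\\
=\langle\nabla^{\rm bas}_{a'}\nu,\tau\rangle+\langle\nabla^{\rm bas}_{a}\nu,\tau'\rangle,
\end{multline*}
which is manifestly symmetric under interchanging $(a,\tau)$ and $(a',\tau')$.

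Now I would apply \eqref{not_dual} twice --- once as stated and once with $(a,\tau)$ and $(a',\tau')$ interchanged --- and add the results. This moves $\langle\nabla^{\rm bas}_{a'}\nu,\tau\rangle+\langle\nabla^{\rm bas}_{a}\nu,\tau'\rangle$ onto one side, at the cost of introducing the $\rho$-derivative terms, the ``reversed'' brackets $\lb(\rho,\rho^t)\tau,\nu\rb_\Delta$ and $\lb(\rho,\rho^t)\tau',\nu\rb_\Delta$, and the terms $\langle\nu,\nabla^{\rm bas}_{a'}\tau\rangle$ and $\langle\nu,\nabla^{\rm bas}_{a}\tau'\rangle$. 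The reversed brackets I convert by duality into $\rho(a)\langle\nu,\tau'\rangle-\langle\nu,\Delta_{(\rho,\rho^t)\tau}\tau'\rangle$ and its $\tau\leftrightarrow\tau'$ counterpart, and then Lemma~\ref{basic_like_eq}, in the form $\Delta_{(\rho,\rho^t)\tau}\tau'=[\tau,\tau']_d+\nabla^{\rm bas}_{a'}\tau$, replaces the remaining $\Delta$'s and cancels the stray $\nabla^{\rm bas}$-terms. What survives is $\langle\nu,[\tau,\tau']_d+[\tau',\tau]_d\rangle$; since a one-line Cartan-formula computation gives $[\tau,\tau']_d+[\tau',\tau]_d=(0,\dr\langle\tau,\tau'\rangle_d)$ (equivalently, axiom (3) for the degenerate Courant algebroid $A\oplus T^*M$), this equals $\pr_{TM}(\nu)\langle\tau,\tau'\rangle_d$, which matches the outstanding left-hand term. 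This establishes the symmetric relation displayed above, hence \eqref{complicated_eqq}.

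The only real obstacle is the bookkeeping, and it is entirely mechanical: one must remember that $\nabla^{\rm bas}$ denotes the basic connection on $TM\oplus A^*$ in the terms $\nabla^{\rm bas}_\bullet\nu$ but the basic connection on $A\oplus T^*M$ in the terms $\nabla^{\rm bas}_\bullet\tau$ (the two being intertwined via \eqref{intertwine_bas}), and that $\lb\cdot\,,\cdot\rb_\Delta$ is not skew-symmetric, so the order of its arguments must be tracked at every step. No ingredient beyond \eqref{not_dual}, Lemma~\ref{basic_like_eq}, and the $\Delta$--$\lb\cdot\,,\cdot\rb_\Delta$ duality is required.
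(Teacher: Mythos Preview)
Your argument is correct and uses the same ingredients as the paper's proof --- \eqref{not_dual}, Lemma~\ref{basic_like_eq}, and the duality between $\Delta$ and $\lb\cdot\,,\cdot\rb_\Delta$ --- with only a mild difference in organization. The paper subtracts the right-hand side from the left, applies \eqref{not_dual} a single time (together with \eqref{useful}) and then \eqref{basicLike_eqq}, and finishes with an explicit Cartan-formula cancellation $\langle\ip{X}\dr\theta,\rho(a')\rangle+\langle\ip{\rho(a')}\dr\theta,X\rangle=0$; you instead symmetrize first, apply \eqref{not_dual} twice, and close with the symmetrizer identity $[\tau,\tau']_d+[\tau',\tau]_d=(0,\dr\langle\tau,\tau'\rangle_d)$. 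Your route avoids unpacking components at the end and makes the $\tau\leftrightarrow\tau'$ symmetry transparent, at the cost of one extra invocation of \eqref{not_dual}; neither approach needs anything the other does not already have at hand.
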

This yields the following corollary.
\begin{corollary}\label{eq_for_morphism}
  Assume that $(U,K,\Delta)$ defines an LA-Dirac structure in $TA\oplus
  T^*A$. Then, for all $u\in\Gamma(U)$
and $k\in\Gamma(K)$:
\begin{equation*}
(\rho,\rho^t)\Delta_{u}k=\lb u,(\rho,\rho^t)k\rb_\Delta+\nabla_{k}^{\rm bas}u.
\end{equation*}
\end{corollary}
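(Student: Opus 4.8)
The plan is to deduce Corollary~\ref{eq_for_morphism} directly from Lemma~\ref{complicated_eq} by specialising the free arguments there. First I would apply \eqref{complicated_eqq} with $\nu=u\in\Gamma(U)$ and $\tau=k\in\Gamma(K)$, which gives, for every $\tau'=(a',\theta')\in\Gamma(A\oplus T^*M)$,
\[
\langle (\rho,\rho^t)\Delta_{u}k-\lb u,(\rho,\rho^t)k\rb_\Delta-\nabla_{k}^{\rm bas}u,\,\tau'\rangle=\langle\nabla_{\tau'}^{\rm bas}u,\,k\rangle .
\]
Here every term of the argument on the left lives in $\Gamma(TM\oplus A^*)$ (the two basic connections and the dull bracket $\lb\cdot\,,\cdot\rb_\Delta$ on $TM\oplus A^*$ all take values there, and $(\rho,\rho^t)$ maps $A\oplus T^*M$ into $TM\oplus A^*$), and the bracket is the canonical nondegenerate pairing of $TM\oplus A^*$ with $A\oplus T^*M$.

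The crucial step is to show that the right-hand side vanishes, and this is precisely where the hypothesis that $(U,K,\Delta)$ is an LA-Dirac structure is used. By definition of the connection on $TM\oplus A^*$ we have $\nabla_{\tau'}^{\rm bas}u=\nabla_{\pr_A(\tau')}^{\rm bas}u=\nabla_{a'}^{\rm bas}u$, and condition~(4) of Theorem~\ref{morphic_Dirac_triples} gives $\nabla_{a'}^{\rm bas}u\in\Gamma(U)$. Since $K=U^\circ$ by condition~(1) of the same theorem, it follows that $\langle\nabla_{a'}^{\rm bas}u,\,k\rangle=0$.

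Therefore $\langle (\rho,\rho^t)\Delta_{u}k-\lb u,(\rho,\rho^t)k\rb_\Delta-\nabla_{k}^{\rm bas}u,\,\tau'\rangle=0$ for all $\tau'\in\Gamma(A\oplus T^*M)$, and the nondegeneracy of the pairing between $TM\oplus A^*$ and $A\oplus T^*M$ forces the argument to be zero, i.e.
\[
(\rho,\rho^t)\Delta_{u}k=\lb u,(\rho,\rho^t)k\rb_\Delta+\nabla_{k}^{\rm bas}u ,
\]
which is the claim. There is no real computational obstacle: the only subtlety — and the actual content of the corollary beyond Lemma~\ref{complicated_eq} — is recognising that the error term $\langle\nabla_{\tau'}^{\rm bas}u,k\rangle$ disappears exactly because the LA-Dirac conditions ($K=U^\circ$ together with $\nabla^{\rm bas}$ preserving $U$) hold; without them one only obtains the weaker relation carrying that extra term.
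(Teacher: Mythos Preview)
Your proof is correct and follows essentially the same approach as the paper's own proof: specialise Lemma~\ref{complicated_eq} to $\nu=u\in\Gamma(U)$ and $\tau=k\in\Gamma(K)$, observe that the right-hand side vanishes because $\nabla_{\tau'}^{\rm bas}u\in\Gamma(U)$ and $K=U^\circ$, and conclude by nondegeneracy of the pairing. You have in fact been slightly more explicit than the paper in spelling out that both condition~(1) ($K=U^\circ$) and condition~(4) of Theorem~\ref{morphic_Dirac_triples} are used.
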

\begin{proof}
By Lemma~\ref{complicated_eq}, we have 
\[\langle (\rho,\rho^t)\Delta_{u}k-\lb u,(\rho,\rho^t)k\rb_\Delta
-\nabla_{k}^{\rm bas}u,\tau\rangle=\langle\nabla_{\tau}^{\rm bas}u, k\rangle\]
for all $\tau\in\Gamma(A\oplus T^*M)$.  Since $\nabla_\tau^{\rm
  bas}$ preserves $\Gamma(U)$ by Theorem~\ref{morphic_Dirac_triples},
the right-hand side vanishes.
\end{proof}

\begin{proof}[Proof of Lemma~\ref{complicated_eq}]

We write $\tau=(a,\theta)$ and $\nu=(X,\alpha)$. Then for any
$\tau'=(a',\theta')\in\Gamma(A\oplus T^*M)$ we have
\begin{align*}
&\langle (\rho,\rho^t)\Delta_{\nu}\tau-\lb \nu,(\rho,\rho^t)\tau\rb_\Delta-\nabla_{a}^{\rm bas}\nu, \tau'\rangle-\langle\nabla_{a'}^{\rm bas}\nu, \tau\rangle\\
\overset{\eqref{not_dual}}=&\langle \Delta_\nu\tau-\Omega_\nu a, (\rho,\rho^t)\tau'\rangle
-\langle \ldr{a}\nu,\tau'\rangle+\langle \nu, \nabla_{a'}^{\rm bas}\tau\rangle
-\rho(a')\langle \nu, \tau\rangle+\lb 
(\rho,\rho^t)\tau, \nu\rb_\Delta, \tau'\rangle\\
=\,\,\,\,&\langle (0, \ldr{X}\theta+\dr\langle\alpha, a\rangle), (\rho,\rho^t)\tau'\rangle
-\langle \ldr{a}\nu,\tau'\rangle
-\rho(a')\langle \nu, \tau\rangle+\rho(a)\langle  \nu, \tau'\rangle\\
&\hspace*{7cm}-\langle \nu,
\Delta_{(\rho,\rho^t)\tau}\tau'- \nabla_{a'}^{\rm bas}\tau\rangle\\
\overset{\eqref{basicLike_eqq}}=&\langle \ldr{X}\theta, \rho(a')\rangle+\rho(a')\langle\alpha, a\rangle
+\langle  \nu, \ldr{a}\tau'\rangle -\rho(a')\langle \nu, \tau\rangle-\langle \nu,[\tau,\tau']_d\rangle\nonumber\\
=\,\,\,\,&\langle \ldr{X}\theta, \rho(a')\rangle-\rho(a')\langle\theta, X\rangle
+\cancel{\langle  \nu, \ldr{a}(a',\theta')\rangle}-\langle \nu,
(\cancel{[a,a'],\ldr{\rho(a)}\theta'}-\ip{\rho(a')}\dr\theta)\rangle\nonumber\\
=\,\,\,\,&\langle\ip{X}\dr\theta, \rho(a')\rangle+\langle
\ip{\rho(a')}\dr\theta, X\rangle=0.\qedhere
\end{align*}
\end{proof}

We also need the
following lemma, which is proved at the end of this section.

\begin{lemma}\label{difficult_computation}
Let $A\to M$ be a Lie algebroid and $(K,U,\Delta)$ an LA-Dirac
triple. Then  Condition (5) of Theorem 
\ref{morphic_Dirac_triples} is equivalent to:
 \begin{equation}\label{bialgebroid1}
\nabla^{\rm bas}_\tau\lb u, v\rb_\Delta-\lb \nabla^{\rm bas}_\tau u,
v\rb_\Delta-\lb u, \nabla^{\rm bas}_\tau v\rb_\Delta+\nabla^{\rm
  bas}_{\Delta_u\tau}v-\nabla^{\rm bas}_{\Delta_v\tau}u=-(\rho,\rho^t)R_\Delta(u,v)\tau
\end{equation}
for all $u,v\in\Gamma(U)$ and $\tau\in\Gamma(A\oplus T^*M)$.

The equality
\begin{equation}\label{bialgebroid2}
\begin{split}
&\Delta_u[\tau_1,\tau_2]_d-[\Delta_u\tau_1,\tau_2]_d-[\tau_1,\Delta_u\tau_2]_d
+\Delta_{\nabla_{a_1}^{\rm bas}u}\tau_2-\Delta_{\nabla_{a_2}^{\rm
    bas}u}\tau_1+(0,\dr\langle\tau_1,\nabla^{\rm bas}_{a_2}u\rangle)\\
=\,&-R_\Delta^{\rm bas}(a_1,a_2)u
\end{split}
\end{equation}
holds for all $\tau_1,\tau_2\in\Gamma(A\oplus T^*M)$ with
$\pr_A(\tau_i)=:a_i$ and all $u\in\Gamma(U)$.

\end{lemma}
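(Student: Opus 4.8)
The plan is to prove both assertions by direct computation, expanding each side in terms of the data attached to $\Delta$: the induced map $\Omega$ of \eqref{omega}, the basic connections $\nabla^{\rm bas}$ (see \eqref{nabla_bas}), the degenerate bracket $[\cdot\,,\cdot]_d$, and the curvatures $R_\Delta\in\Omega^2(U,\Hom(A,K))$ and $R_\Delta^{\rm bas}$. The bookkeeping is organised by three earlier identities: the splitting \eqref{useful} of $\Delta$ into its $A$-part and its form-part, Lemma~\ref{basic_like_eq}, which expresses $[\cdot\,,\cdot]_d$ through $\Delta$ and $\nabla^{\rm bas}$, and Corollary~\ref{eq_for_morphism}, which computes $(\rho,\rho^t)\Delta_uk$ for $u\in\Gamma(U)$ and $k\in\Gamma(K)$. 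Since $\Delta$, $\Omega$ and $\nabla^{\rm bas}$ obey Leibniz rules, both sides of \eqref{bialgebroid2} are $C^\infty(M)$-linear in $\tau_1$ and $\tau_2$, and \eqref{bialgebroid1} is tensorial in $u,v$ and in $\tau$ once the defect terms are accounted for; hence one may check both on a local frame and, using \eqref{useful}, reduce to the case $\tau_i=(a_i,0)$.

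For \eqref{bialgebroid2} I would substitute the formula of Lemma~\ref{basic_like_eq} for each occurrence of $[\cdot\,,\cdot]_d$ and expand $R_\Delta^{\rm bas}(a_1,a_2)u$ from its definition in terms of $\Omega$ and $\ldr{a_i}$. After these substitutions every term on the left is matched by one on the right, the remaining discrepancy being killed by the Dorfman-connection axioms for $\Delta$ (Definition~\ref{the_def}) and the Leibniz rule for $\Omega$; in effect \eqref{bialgebroid2} is the translation of the definition of $R_\Delta^{\rm bas}$ into the language of $\Delta$ and $[\cdot\,,\cdot]_d$. This step uses only that $\Delta$ is a Dorfman connection and that $A$ is a Lie algebroid, not Conditions (1)--(4) of Theorem~\ref{morphic_Dirac_triples}.

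For the equivalence of \eqref{bialgebroid1} with Condition (5) of Theorem~\ref{morphic_Dirac_triples} I would use that $K=U^\circ$: a section $k\in\Gamma(A\oplus T^*M)$ lies in $\Gamma(K)$ precisely when $\langle k,v\rangle=0$ for all $v\in\Gamma(U)$, so Condition (5) amounts to $\langle R_\Delta^{\rm bas}(a_1,a_2)u,v\rangle=0$ for all $a_1,a_2\in\Gamma(A)$ and $u,v\in\Gamma(U)$. Starting from \eqref{bialgebroid2} (now with $u\in\Gamma(U)$) I would pair with a section $v\in\Gamma(U)$ and convert every term $\langle\Delta_\bullet\tau,v\rangle$ and $\langle[\tau_1,\tau_2]_d,v\rangle$ into an expression in $\nabla^{\rm bas}_\bullet v$ by means of \eqref{not_dual} and Lemma~\ref{complicated_eq}; here Conditions (3)--(4) (the dull bracket restricts to the skew Lie bracket on $\Gamma(U)$, and $\nabla^{\rm bas}$ preserves $\Gamma(U)$) are exactly what make the result close up inside $\Gamma(U)$. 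The outcome is that $\langle R_\Delta^{\rm bas}(a_1,a_2)u,v\rangle$ equals the pairing of $v$ with the difference of the two sides of \eqref{bialgebroid1}, the right-hand term $-(\rho,\rho^t)R_\Delta(u,v)\tau$ being produced by combining Corollary~\ref{eq_for_morphism} with the definition of $R_\Delta$ as the core-linear defect, relative to the chosen splitting, of the Lie algebroid $\mathsf D_A\to A$ over $U$ (\cite{Jotz13a}); recall that $R_\Delta(u,v)$ acts on $A\oplus T^*M$ through $\pr_A$. Since both sides of \eqref{bialgebroid1} lie in $\Gamma(U)$ and the pairing $U\times_M(A\oplus T^*M)\to\R$ is nondegenerate in the $U$-argument, the vanishing of $\langle R_\Delta^{\rm bas}(a_1,a_2)u,v\rangle$ for all arguments is equivalent to \eqref{bialgebroid1}.

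The main obstacle is organisational rather than conceptual: the computations are long, and one must keep careful track of which cancellations invoke the Lie algebroid axioms for $A$, which invoke the Dorfman axioms for $\Delta$, and which invoke Conditions (1)--(4). The one genuinely delicate point is the duality step above — recognising the right-hand side of \eqref{bialgebroid1} as $-(\rho,\rho^t)R_\Delta(u,v)\tau$, symmetrically in $u$ and $v$. This is really an instance of the compatibility between the two VB-algebroid structures on $\mathsf D_A$ (one of the matched-pair equations of \cite{GrJoMaMe14}), and it is most transparent when routed through Corollary~\ref{eq_for_morphism} and Lemma~\ref{complicated_eq} rather than attacked head-on.
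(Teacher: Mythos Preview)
Your plan for \eqref{bialgebroid2} is correct and matches the paper: it is a direct expansion using the definitions of $\Omega$, $\nabla^{\rm bas}$ and $R_\Delta^{\rm bas}$, together with \eqref{useful}, and uses only that $A$ is a Lie algebroid and $\Delta$ a Dorfman connection.

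Your plan for the equivalence \eqref{bialgebroid1}~$\Leftrightarrow$~Condition~(5) contains a type error. You write that after pairing \eqref{bialgebroid2} with $v\in\Gamma(U)$ and manipulating, ``$\langle R_\Delta^{\rm bas}(a_1,a_2)u,v\rangle$ equals the pairing of $v$ with the difference of the two sides of \eqref{bialgebroid1}''. But both $v$ and the difference of the two sides of \eqref{bialgebroid1} lie in $\Gamma(TM\oplus A^*)$, where there is no canonical pairing; the natural pairing is between $TM\oplus A^*$ and $A\oplus T^*M$. What actually works (and what the paper does) is to pair the difference of \eqref{bialgebroid1}---call it $\tau'\in\Gamma(TM\oplus A^*)$---with an arbitrary $a'\in\Gamma(A)\subseteq\Gamma(A\oplus T^*M)$, and to show that $\langle\tau',a'\rangle$ coincides with the expression obtained by rewriting $\langle R_\Delta^{\rm bas}(a,a')u_1,u_2\rangle$ (here $a=\pr_A\tau$). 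One first checks $\pr_{TM}\tau'=0$ using the Jacobi identity for vector fields, so pairing with $\Gamma(A)$ suffices. Thus Condition~(5), namely $\langle R_\Delta^{\rm bas}(a,a')u_1,u_2\rangle=0$ for all arguments, is equivalent to $\tau'=0$.

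There is also a structural difference. You propose to reach \eqref{bialgebroid1} \emph{via} \eqref{bialgebroid2}: pair the identity \eqref{bialgebroid2} with $v$ and then reorganise. This can be made to work---after the dualities you cite, the resulting scalar expression in $(u,v,\tau_1,\tau_2)$ should be recognisable as $\langle\text{diff of }\eqref{bialgebroid1}(u,v,\tau_1),\tau_2\rangle$---but the paper does not do this. It treats the two equations independently: \eqref{bialgebroid2} is verified by direct expansion, and \eqref{bialgebroid1} is attacked by computing its left-hand side from scratch, extracting the term $-(\rho,\rho^t)R_\Delta(u_1,u_2)a$ explicitly (using \eqref{complicated_eqq} twice), and then showing the remainder $\tau'$ pairs with $a'$ to exactly the rewritten form of $\langle R_\Delta^{\rm bas}(a,a')u_1,u_2\rangle$. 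Your route is conceptually fine once the pairing is corrected, but it adds a layer: you must first massage the paired \eqref{bialgebroid2} into a form where the variables $(\tau_1,\tau_2)$ separate into the roles of $(\tau,\tau')$ for \eqref{bialgebroid1}, and this reorganisation is itself as long as the paper's direct computation.
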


\begin{proof}[Proof of Theorem~\ref{Courant_algebroid}]
We start by checking that $\lb\cdot\,,\cdot\rb$ is well-defined.
Choose $k\in\Gamma(K)$, $\tau\in\Gamma(A\oplus T^*M)$ with $\pr_A(k)=a$, $\pr_A(\tau)=a'$, and 
$u\in\Gamma(U)$. We have then 
\begin{align*}
\lb u\oplus\tau, (\rho,\rho^t)(-k)\oplus k\rb
=\,&\left(-\lb u,(\rho,\rho^t)k\rb_\Delta-\nabla_{a'}^{\rm bas}(\rho,\rho^t)k-\nabla_{a}^{\rm bas}u\right)\\
&\oplus\left( [\tau,k]_d+\Delta_{u}k+\Delta_{(\rho,\rho^t)k}\tau
-(0,\dr\langle \tau,(\rho,\rho^t)k\rangle)\right).
\end{align*}
Using Lemma~\ref{basic_like_eq}, we see immediately that the second
term of this sum equals $\nabla_{a'}^{\rm bas}k+\Delta_uk$.  
We verify that this is a section of $K$. By
Remark~\ref{remark_duality} or the considerations before
Theorem~\ref{morphic_Dirac_triples}, we know that $\nabla_{a'}^{\rm
  bas}k\in\Gamma(K)$, and 
$\Delta_uk\in\Gamma(K)$ follows from $K=U^\circ$ and the fact that
$\lb\cdot\,,\cdot\rb_\Delta$, which is dual to $\Delta$, preserves
sections of $U$.

Recall \eqref{intertwine_bas} and $(\rho,\rho^t)\Delta_{u}k=\lb
u,(\rho,\rho^t)k\rb_\Delta+\nabla_{a'}^{\rm bas}u$ by Lemma
\ref{eq_for_morphism}.  These yield
\[\lb u\oplus\tau, (\rho,\rho^t)(-k)\oplus k\rb=(-(\rho,\rho^t)(\nabla_a^{\rm bas}k+\Delta_uk))\oplus(\nabla_a^{\rm bas}k+\Delta_uk),
\]
which is $0$ in $C$.  The equality $\lb (\rho,\rho^t)(-k)\oplus k,
u\oplus\tau \rb=0$ follows with $\lb (\rho,\rho^t)(-k)\oplus k, u\oplus
\tau\rb_C+\lb u\oplus\tau, (\rho,\rho^t)(-k)\oplus k\rb_C =\mathcal
D\langle\langle u\oplus\tau, (\rho,\rho^t)(-k) \rangle\rangle_C $
proved below and $\langle\langle u\oplus\tau, (\rho,\rho^t)(-k)\oplus
k\rangle\rangle_C=\langle u, k\rangle=0$ since $U=K^\circ$.

Choose $u$, $u_i\in\Gamma(U)$  and
$\tau=(a,\theta)$, $\tau_i=(a_i,\theta_i)\in\Gamma(A\oplus T^*M)$, $i=1,2,3$.
We check $(1)-(3)$ in the definition of a Courant algebroid, 
in reverse order.
First we have 
\begin{align*}
&\lb u_1\oplus \tau_1,  u_2\oplus \tau_2\rb_C+\lb u_2\oplus \tau_2,  u_1\oplus \tau_1\rb_C\\
=&(\lb u_1, u_2\rb_\Delta+\lb u_2, u_1\rb_\Delta)\oplus 
([\tau_1,\tau_2]_d+[\tau_2,\tau_1]_d+(0,
\dr\langle\tau_1,u_2\rangle+\dr\langle \tau_2, u_1\rangle)\\
=&0\oplus (0, \dr(\langle\tau_1,\tau_2\rangle_d+\langle\tau_1,u_2\rangle+\langle \tau_2, u_1\rangle))
=\mathcal D\langle\langle u_1\oplus\tau_1, u_2\oplus\tau_2\rangle\rangle_C.
\end{align*}

We denote by $\Skew_\Delta(\nu_1,\nu_2)$
the anti-commutator $\lb \nu_1,\nu_2\rb_\Delta+\lb \nu_2, \nu_1\rb_\Delta$, $\nu_1,\nu_2\in\Gamma(TM\oplus A^*)$.
We check the equality $c(u_1\oplus\tau_1)\langle\langle
u_2\oplus\tau_2, u_3\oplus\tau_3\rangle\rangle_C =\langle\langle \lb
u_1\oplus \tau_1, u_2\oplus\tau_2 \rb_C,
u_3\oplus\tau_3\rangle\rangle_C +\langle\langle u_2\oplus\tau_2, \lb
u_1\oplus \tau_1,u_3\oplus\tau_3\rb_C\rangle\rangle_C$.  The bracket
$\lb u_1\oplus \tau_1, u_2\oplus \tau_2\rb$ equals $(\lb u_1,
u_2\rb_\Delta+\nabla_{\tau_1}^{\rm bas}u_2-\nabla_{\tau_2}^{\rm
  bas}u_1) \oplus([\tau_1,
\tau_2]_d+\Delta_{u_1}\tau_2-\Delta_{u_2}\tau_1+(0, \dr\langle \tau_1,
u_2\rangle))$. Using \eqref{basicLike_eqq}, we can replace
$[\tau_1,\tau_2]_d$ by $\nabla_{\tau_1}^{\rm
  bas}\tau_2-\Delta_{(\rho,\rho^t)\tau_2}\tau_1+(0,\dr\langle \tau_1,
(\rho,\rho^t)\tau_2\rangle)$ in this expression and we find that
$\langle\langle \lb u_1\oplus \tau_1, u_2\oplus \tau_2\rb,
u_3\oplus\tau_3\rangle\rangle_C$ equals 
\begin{align}\label{eq1}
&\langle \lb u_1, u_2\rb_\Delta+\nabla_{\tau_1}^{\rm
  bas}u_2-\nabla_{\tau_2}^{\rm bas}u_1, \tau_3\rangle
+X_3\langle \tau_1, u_2\rangle+ X_3\langle \tau_1,
(\rho,\rho^t)\tau_2\rangle\nonumber\\
&+\langle  \nabla_{\tau_1}^{\rm
  bas}\tau_2-\Delta_{(\rho,\rho^t)\tau_2}\tau_1+\Delta_{u_1}\tau_2-\Delta_{u_2}\tau_1,
u_3\rangle+\rho(a_3)\langle \tau_1, u_2\rangle\nonumber\\
&+ \rho(a_3)\langle \tau_1,
(\rho,\rho^t)\tau_2\rangle+\langle  \nabla_{\tau_1}^{\rm
  bas}\tau_2-\Delta_{(\rho,\rho^t)\tau_2}\tau_1+\Delta_{u_1}\tau_2-\Delta_{u_2}\tau_1,
(\rho,\rho^t)\tau_3\rangle.\nonumber
\end{align} 
Using \eqref{complicated_eqq} we replace 
$\langle -\nabla^{\rm
  bas}_{\tau_2}u_1+(\rho,\rho^t)\Delta_{u_1}\tau_2,\tau_3\rangle$ in this equation by
$\langle \nabla^{\rm bas}_{\tau_3}u_1,\tau_2\rangle+\langle \lb
u_1,(\rho,\rho^t)\tau_2\rb_\Delta,\tau_3\rangle$.

Now we sum $\langle\langle \lb u_1\oplus \tau_1, u_2\oplus \tau_2\rb,
  u_3\oplus\tau_3\rangle\rangle_C$ with $ \langle\langle \lb u_1\oplus
  \tau_1, u_3\oplus \tau_3\rb,
  u_2\oplus\tau_2\rangle\rangle_C$.
The terms $\langle \lb u_1,u_2\rb_\Delta, \tau_3\rangle$, $\langle \lb u_1,u_3\rb_\Delta, \tau_2\rangle$,
$\langle \Delta_{u_1}\tau_2,u_3\rangle$ and $\langle \Delta_{u_1}\tau_3, u_2\rangle$
add up to 
\[X_1\langle \langle u_2\oplus\tau_2, u_3\oplus\tau_3\rangle\rangle_C
-X_1\langle \tau_2,(\rho,\rho^t)\tau_3\rangle.
\]
The terms $X_3\langle\tau_1,u_2\rangle$ and
$-\langle\Delta_{u_3}\tau_1,u_2\rangle$ add up to $\langle \tau_1,\lb
u_2,u_3\rb_\Delta\rangle$ and the terms $X_2\langle\tau_1,u_3\rangle$
and $-\langle\Delta_{u_2}\tau_1,u_3\rangle$ add up to $\langle
\tau_1,\lb u_3,u_2\rb_\Delta\rangle$, which cancels out $\langle
\tau_1,\lb u_2,u_3\rb_\Delta\rangle$ since
$\lb\cdot\,,\cdot\rb_\Delta$ is skew-symmetric on sections of $U$. In a similar manner,
\begin{equation}\label{4_1}
X_3\langle
\tau_1,(\rho,\rho^t)\tau_2\rangle-\langle\Delta_{u_2}\tau_1,(\rho,\rho^t)\tau_3\rangle
+X_2\langle
\tau_1,(\rho,\rho^t)\tau_3\rangle-\langle\Delta_{u_3}\tau_1,(\rho,\rho^t)\tau_2\rangle
\end{equation}
is $\langle \tau_1, \lb u_3,(\rho,\rho^t)\tau_2\rb_\Delta+\lb
u_2,(\rho,\rho^t)\tau_3\rb_\Delta\rangle$, and 
\begin{equation}\label{4_2}
-\langle \Delta_{(\rho,\rho^t)\tau_2}\tau_1,u_3\rangle+\rho(a_3)\langle\tau_1, u_2\rangle
-\langle \Delta_{(\rho,\rho^t)\tau_3}\tau_1,u_2\rangle+\rho(a_2)\langle\tau_1, u_3\rangle
\end{equation}
is 
$\langle \tau_1, \lb (\rho,\rho^t)\tau_2, u_3\rb_\Delta+\lb
(\rho,\rho^t)\tau_3,u_2\rb_\Delta\rangle$. As a consequence, \eqref{4_1} and \eqref{4_2}
add up to
\[\langle \tau_1, \Skew_\Delta(u_2,(\rho,\rho^t)\tau_3)+\Skew_\Delta(u_3,(\rho,\rho^t)\tau_2)\rangle.
\] 
By \eqref{not_dual}, this adds up with 
\[\langle \nabla^{\rm bas}_{\tau_1}u_2,\tau_3\rangle+\langle\nabla^{\rm bas}_{\tau_1}\tau_2,u_3\rangle
+\langle \nabla^{\rm bas}_{\tau_1}u_3,\tau_2\rangle+\langle\nabla^{\rm bas}_{\tau_1}\tau_3,u_2\rangle
\]
to $\rho(a_1)\langle u_2,\tau_3\rangle+\rho(a_1)\langle
u_3,\tau_2\rangle$.  Similarly, using \eqref{not_dual} and \eqref{intertwine_bas}, the terms\linebreak
$\langle \nabla^{\rm bas}_{\tau_1}\tau_2,(\rho,\rho^t)\tau_3\rangle$
and $\langle \nabla^{\rm
  bas}_{\tau_1}\tau_3,(\rho,\rho^t)\tau_2\rangle$ add up to
\[\rho(a_1)\langle \tau_2,(\rho,\rho^t)\tau_3\rangle-\langle
\tau_1,\Skew_\Delta((\rho,\rho^t)\tau_2,(\rho,\rho^t)\tau_3)\rangle.\]
The terms $\rho(a_3)\langle\tau_1,(\rho,\rho^t)\tau_2\rangle$,
$\rho(a_2)\langle \tau_1,(\rho,\rho^t)\tau_3\rangle$,
$-\langle\Delta_{(\rho,\rho^t)\tau_2}\tau_1,(\rho,\rho^t)\tau_3\rangle$
and\linebreak
$-\langle\Delta_{(\rho,\rho^t)\tau_3}\tau_1,(\rho,\rho^t)\tau_2\rangle$ sum up
to $\langle\tau_1, \Skew_\Delta((\rho,\rho^t)\tau_2,(\rho,\rho^t)\tau_3)\rangle$.

Putting everything together, we get
\begin{equation*}
\begin{split}
  &c(u_1\oplus\tau_1)\langle \langle u_2\oplus\tau_2,
  u_3\oplus\tau_3\rangle\rangle_C-X_1\langle \tau_2, (\rho,\rho^t)\tau_3\rangle\\
&+\langle \nabla^{\rm bas}_{\tau_3}u_1,\tau_2\rangle+\langle \lb
u_1,(\rho,\rho^t)\tau_2\rb_\Delta,\tau_3\rangle +\langle \nabla^{\rm
  bas}_{\tau_2}u_1,\tau_3\rangle+\langle \lb
u_1,(\rho,\rho^t)\tau_3\rb_\Delta,\tau_2\rangle.
\end{split}
\end{equation*}
By \eqref{complicated_eqq} the third, fourth and fifth term add up to
$\langle\Delta_{u_1}\tau_2,(\rho,\rho^t)\tau_3\rangle$, which cancels out the second and the sixth.

\medskip

For the Jacobi identity, we check that
\begin{align*}
&\operatorname{Jac}_{\lb\cdot\,,\cdot\rb_\Delta}(u_1\oplus\tau_1,u_2\oplus\tau_2,u_3\oplus\tau_3)\\
&:=\lb \lb u_1\oplus\tau_1, u_2\oplus \tau_2\rb,
u_3\oplus \tau_3\rb+\lb u_2\oplus \tau_2, \lb u_1\oplus\tau_1, u_3\oplus \tau_3\rb
\rb\\
&\hspace*{5cm}-\lb u_1\oplus \tau_1, \lb u_2\oplus\tau_2,
u_3\oplus \tau_3\rb\rb=-(\rho,\rho^t)(k)\oplus k,
\end{align*}
with
$k:=\left[R_\Delta^{\rm bas}(a_1,a_2)u_3-R_\Delta(u_1,
  u_2)\tau_3\right]+\rm c.p.$

To see that $k$ is a section of $K$, recall from Theorem~\ref{morphic_Dirac_triples}
that since  $u_i=(X_i,\alpha_i)\in\Gamma(U)$, $i=1,2,3$, we have
$R_\Delta(u_1,u_2)\tau_3+{\rm c.p.}\in\Gamma(K)$.
By the same theorem, 
 we find $R_\Delta^{\rm bas}(a_1,a_2)u_3+{\rm c.p.}\in\Gamma(K)$.
We write $\tau_i=(a_i,\theta_i)$ for $i=1,2,3$. Since $\lb
u_1\oplus\tau_1, u_2\oplus \tau_2\rb$
is
\[\left(\lb u_1,u_2\rb_\Delta+\nabla_{\tau_1}^{\rm
        bas}u_2-\nabla_{\tau_2}^{\rm bas}u_1\right)\oplus\left(
      [\tau_1,\tau_2]_d+\Delta_{u_1}\tau_2-\Delta_{u_2}\tau_1
      +(0,\dr\langle \tau_1,u_2\rangle)\right),
\]
its bracket 
$\lb \lb u_1\oplus\tau_1, u_2\oplus \tau_2\rb, u_3\oplus \tau_3\rb$
with $u_3\oplus\tau_3$ is 
\begin{align*}
 &\Bigl(\lb \lb u_1,u_2\rb_\Delta,u_3\rb_\Delta
+\left\lb\nabla_{\tau_1}^{\rm
        bas}u_2-\nabla_{\tau_2}^{\rm bas}u_1, u_3\right\rb_\Delta+\nabla^{\rm bas}_{[\tau_1,\tau_2]_d+\Delta_{u_1}\tau_2-\Delta_{u_2}\tau_1
      +(0,\dr\langle \tau_1,u_2\rangle)}u_3\\
&\hspace*{4cm} -\nabla_{\tau_3}^{\rm bas}\left(\lb u_1,u_2\rb_\Delta+\nabla_{\tau_1}^{\rm
        bas}u_2-\nabla_{\tau_2}^{\rm bas}u_1\right)\Bigr)\\
&\oplus\Bigl([[\tau_1,\tau_2]_d,\tau_3]_d+[\Delta_{u_1}\tau_2-\Delta_{u_2}\tau_1, \tau_3]_d
+[(0,\dr\langle \tau_1,u_2\rangle),\tau_3]_d\\
& \quad+\Delta_{\lb u_1,u_2\rb_\Delta+\nabla_{\tau_1}^{\rm
        bas}u_2-\nabla_{\tau_2}^{\rm bas}u_1}\tau_3-\Delta_{u_3}([\tau_1,\tau_2]_d+\Delta_{u_1}\tau_2-\Delta_{u_2}\tau_1
      +(0,\dr\langle \tau_1,u_2\rangle))\\
&\quad+(0, \dr\langle [\tau_1,\tau_2]_d+\Delta_{u_1}\tau_2-\Delta_{u_2}\tau_1
      +(0,\dr\langle \tau_1,u_2\rangle), u_3\rangle)
\Bigr).
\end{align*}
Since $[(0,\dr\langle \tau_1,u_2\rangle),\tau_3]_d=0$
and \[\pr_A\left([\tau_1,\tau_2]_d+\Delta_{u_1}\tau_2-\Delta_{u_2}\tau_1
      +(0,\dr\langle \tau_1,u_2\rangle\right)
=[a_1,a_2]+\pr_A\left(\Omega_{u_1}a_2-\Omega_{u_2}a_1\right),\]
this is
\begin{align*}
  &\lb \lb u_1\oplus\tau_1, u_2\oplus \tau_2\rb, u_3\oplus \tau_3\rb\\
  =&\Bigl(\lb \lb u_1,u_2\rb_\Delta,u_3\rb_\Delta
+\left\lb\nabla_{\tau_1}^{\rm
        bas}u_2-\nabla_{a_2}^{\rm bas}u_1, u_3\right\rb_\Delta+\nabla^{\rm bas}_{[a_1,a_2]+\pr_A\left(\Omega_{u_1}a_2-\Omega_{u_2}a_1\right)}u_3\\
&\hspace*{4cm} -\nabla_{a_3}^{\rm bas}\left(\lb u_1,u_2\rb_\Delta+\nabla_{a_1}^{\rm
        bas}u_2-\nabla_{a_2}^{\rm bas}u_1\right)\Bigr)\\
&\oplus\Bigl([[\tau_1,\tau_2]_d,\tau_3]_d+[\Delta_{u_1}\tau_2-\Delta_{u_2}\tau_1, \tau_3]_d\\
& \quad+\Delta_{\lb u_1,u_2\rb_\Delta+\nabla_{a_1}^{\rm
        bas}u_2-\nabla_{a_2}^{\rm bas}u_1}\tau_3-\Delta_{u_3}([\tau_1,\tau_2]_d+\Delta_{u_1}\tau_2-\Delta_{u_2}\tau_1
      +(0,\dr\langle \tau_1,u_2\rangle))\\
&\quad+(0, \dr\langle [\tau_1,\tau_2]_d+\Delta_{u_1}\tau_2-\Delta_{u_2}\tau_1
      +(0,\dr\langle \tau_1,u_2\rangle), u_3\rangle)
\Bigr).
\end{align*}
In the same manner, we get
\begin{align*}
  &\lb  u_2\oplus\tau_2, \lb u_1\oplus \tau_1,  u_3\oplus \tau_3\rb\rb\\
 & =\Bigl(\lb  u_2,\lb u_1,u_3\rb_\Delta\rb_\Delta
+\left\lb u_2,\nabla_{a_1}^{\rm
        bas}u_3-\nabla_{a_3}^{\rm bas}u_1 \right\rb_\Delta\\
&\quad +\nabla_{a_2}^{\rm bas}\left(\lb u_1,u_3\rb_\Delta+\nabla_{a_1}^{\rm
        bas}u_3-\nabla_{a_3}^{\rm bas}u_1\right)
-\nabla^{\rm bas}_{[a_1,a_3]+\pr_A\left(\Omega_{u_1}a_3-\Omega_{u_3}a_1\right)}u_2\Bigr)\\
&\oplus\Bigl([\tau_2,[\tau_1, \tau_3]_d]_d+[ \tau_2, \Delta_{u_1}\tau_3-\Delta_{u_3}\tau_1]_d
+[\tau_2, (0, \dr\langle \tau_1,u_3\rangle)]_d\\
& \quad+\Delta_{u_2}([\tau_1,\tau_3]_d+\Delta_{u_1}\tau_3-\Delta_{u_3}\tau_1
      +(0,\dr\langle \tau_1,u_3\rangle))
-\Delta_{\lb u_1,u_3\rb_\Delta+\nabla_{a_1}^{\rm
        bas}u_3-\nabla_{a_3}^{\rm bas}u_1}\tau_2\\
&\quad+(0, \dr\langle \tau_2, \lb u_1, u_3\rb_\Delta+\nabla_{a_1}^{\rm bas}u_3-\nabla_{a_3}^{\rm bas}u_1\rangle)
\Bigr).
\end{align*}
We start by studying the $U$-part of $\operatorname{Jac}_{\lb\cdot\,,\cdot\rb_\Delta}(u_1\oplus\tau_1,u_2\oplus\tau_2,u_3\oplus\tau_3)$.
By the computations above, this equals
\begin{align*}
&\cancel{\lb \lb u_1,u_2\rb_\Delta,u_3\rb_\Delta}+\cancel{\lb  u_2,\lb u_1,u_3\rb_\Delta\rb_\Delta}-\cancel{\lb  u_1,\lb u_2,u_3\rb_\Delta\rb_\Delta}\\
&-R_{\nabla^{\rm bas}}(a_1,a_2)u_3+R_{\nabla^{\rm bas}}(a_1,a_3)u_2-R_{\nabla^{\rm bas}}(a_2,a_3)u_1\\
&+\left\lb\nabla_{a_1}^{\rm
        bas}u_2-\nabla_{a_2}^{\rm bas}u_1, u_3\right\rb_\Delta+\nabla^{\rm bas}_{\pr_A\left(\Omega_{u_1}a_2-\Omega_{u_2}a_1\right)}u_3
 -\nabla_{a_3}^{\rm bas}\lb u_1,u_2\rb_\Delta\\
&
+\left\lb u_2,\nabla_{a_1}^{\rm
        bas}u_3-\nabla_{a_3}^{\rm bas}u_1 \right\rb_\Delta+\nabla_{a_2}^{\rm bas}\lb u_1,u_3\rb_\Delta
-\nabla^{\rm bas}_{\pr_A\left(\Omega_{u_1}a_3-\Omega_{u_3}a_1\right)}u_2\\
&
-\left\lb u_1,\nabla_{a_2}^{\rm
        bas}u_3-\nabla_{a_3}^{\rm bas}u_2 \right\rb_\Delta-\nabla_{a_1}^{\rm bas}\lb u_2,u_3\rb_\Delta+\nabla^{\rm bas}_{\pr_A\left(\Omega_{u_2}a_3-\Omega_{u_3}a_2\right)}u_1
\end{align*}
Since $(U,\pr_{TM},\lb\cdot\,,\cdot\rb_\Delta)$
is a Lie algebroid, the first three terms vanish together.
Next note that since $(\nabla^{\rm bas}, \nabla^{\rm bas},
R_\Delta^{\rm bas})$ is a representation up to homotopy of $A$ on
$(\rho,\rho^t)\colon A\oplus T^*M\to TM\oplus A^*$,
the basic curvature satisfies
\begin{equation}
  R_{\nabla^{\rm bas}}(a_1,a_2)\nu=(\rho,\rho^t)\circ R^{\rm bas}_\Delta(a_1,a_2)\nu
\end{equation}
for all $a_1,a_2\in\Gamma(A)$, $\nu\in\Gamma(TM\oplus A^*)$.
Using this, \eqref{bialgebroid1},  (4) in Theorem
\ref{morphic_Dirac_triples}, the skew-symmetry of $\lb
\cdot\,,\cdot\rb_\Delta$ on $\Gamma(U)$, and the equality
$\pr_A\Omega_{u_1}a_2=\pr_A\Delta_{u_1}\tau_2$, we find easily that the $U$-part
of $\operatorname{Jac}_{\lb\cdot\,,\cdot\rb_\Delta}(u_1\oplus\tau_1,u_2\oplus\tau_2,u_3\oplus\tau_3)$
is 
\begin{align*}
&-(\rho,\rho^t)(R_{\Delta}^{\rm bas}(a_1,a_2)u_3+R_{\Delta}^{\rm
    bas}(a_3,a_1)u_2+R_{\Delta}^{\rm bas}(a_2,a_3)u_1)\\
&+(\rho,\rho^t)(R_\Delta(u_1,u_2)\tau_3+R_\Delta(u_2,u_3)\tau_1+R_\Delta(u_3,u_1)\tau_2).
\end{align*}
Now we look more carefully at the $A\oplus T^*M$-part of
$\operatorname{Jac}_{\lb\cdot\,,\cdot\rb_\Delta}(u_1\oplus\tau_1,u_2\oplus\tau_2,u_3\oplus\tau_3)$.
Sorting out the terms yields that this is
\begin{align*}
&\cancel{[[\tau_1,\tau_2]_d,\tau_3]_d}+\cancel{[\tau_2,[\tau_1, \tau_3]_d]_d}-\cancel{[\tau_1,[\tau_2, \tau_3]_d]_d}\\
&-\Delta_{u_1}[\tau_2,\tau_3]_d
+[\Delta_{u_1}\tau_2, \tau_3]_d+[ \tau_2, \Delta_{u_1}\tau_3]_d
+\Delta_{\nabla_{a_3}^{\rm bas}u_1}\tau_2
-\Delta_{\nabla_{a_2}^{\rm bas}u_1}\tau_3-(0, \dr\langle\tau_2,\nabla_{a_3}^{\rm bas}u_1\rangle)\\
&+\Delta_{u_2}[\tau_1,\tau_3]_d-[\Delta_{u_2}\tau_1, \tau_3]_d -[ \tau_1, \Delta_{u_2}\tau_3]_d
-\Delta_{\nabla_{a_3}^{\rm bas}u_2}\tau_1+\Delta_{\nabla_{a_1}^{\rm
        bas}u_2}\tau_3+(0, \dr\langle\tau_1,\nabla_{a_3}^{\rm bas}u_2\rangle)\\
&-\Delta_{u_3}[\tau_1,\tau_2]_d-[ \tau_2, \Delta_{u_3}\tau_1]_d+[ \tau_1, \Delta_{u_3}\tau_2]_d+\Delta_{\nabla_{a_2}^{\rm
        bas}u_3}\tau_1-\Delta_{\nabla_{a_1}^{\rm
        bas}u_3}\tau_2-(0, \dr\langle \tau_1,\nabla_{a_2}^{\rm bas}u_3\rangle)\\
&-R_\Delta(u_1,u_2)\tau_3-R_\Delta(u_2,u_3)\tau_1-R_\Delta(u_3,u_1)\tau_2\\
& -\Delta_{u_3}(0,\dr\langle \tau_1,u_2\rangle)+(0, \dr\langle [\tau_1,\tau_2]_d+\Delta_{u_1}\tau_2-\Delta_{u_2}\tau_1
      +(0,\dr\langle \tau_1,u_2\rangle), u_3\rangle)\\
&+[\tau_2, (0, \dr\langle \tau_1,u_3\rangle)]_d+\Delta_{u_2}(0,\dr\langle \tau_1,u_3\rangle)+(0, \dr\langle \tau_2, \lb u_1, u_3\rb_\Delta+\nabla_{a_1}^{\rm bas}u_3\rangle)\\
&-[\tau_1, (0, \dr\langle \tau_2,u_3\rangle)]_d-\Delta_{u_1}(0,\dr\langle \tau_2,u_3\rangle)-(0, \dr\langle \tau_1, \lb u_2, u_3\rb_\Delta\rangle).
\end{align*}
By the Jacobi identity for $[\cdot\,,\cdot]_d$
and three times \eqref{bialgebroid2}
together with 
$-[ \tau_2, \Delta_{u_3}\tau_1]_d=[\Delta_{u_3}\tau_1, \tau_2]_d-(0,\dr\langle (\rho,\rho^t)\Delta_{u_3}\tau_1,\tau_2\rangle)$, 
we get
 \begin{align*}
&R_\Delta^{\rm bas}(a_2,a_3)u_1-R_\Delta^{\rm bas}(a_1,a_3)u_2+R_\Delta^{\rm bas}(a_1,a_2)u_3\\
&-R_\Delta(u_1,u_2)\tau_3-R_\Delta(u_2,u_3)\tau_1-R_\Delta(u_3,u_1)\tau_2\\
& -\cancel{(0,\dr X_3\langle \tau_1,u_2\rangle)}+(0, \dr\langle [\tau_1,\tau_2]_d+\cancel{\Delta_{u_1}\tau_2}
-\cancel{\Delta_{u_2}\tau_1}, u_3\rangle)+ \cancel{(0, \dr X_3\langle \tau_1,u_2\rangle)}\\
&+(0, \dr\rho(a_2)\langle \tau_1,u_3\rangle)+\cancel{(0,\dr X_2\langle \tau_1,u_3\rangle)}
+(0, \dr\langle \tau_2, \cancel{\lb u_1, u_3\rb_\Delta}+\nabla_{a_1}^{\rm bas}u_3\rangle)\\
&- (0, \dr \rho(a_1)\langle \tau_2,u_3\rangle)-\cancel{(0,\dr X_1\langle \tau_2,u_3\rangle)}
-\cancel{(0, \dr\langle \tau_1, \lb u_2, u_3\rb_\Delta\rangle)}\\
&-(0,\dr\langle(\rho,\rho^t)\Delta_{u_3}\tau_1, \tau_2\rangle)\\
=\,&R_\Delta^{\rm bas}(a_2,a_3)u_1+R_\Delta^{\rm bas}(a_3,a_1)u_2+R_\Delta^{\rm bas}(a_1,a_2)u_3\\
&-R_\Delta(u_1,u_2)\tau_3-R_\Delta(u_2,u_3)\tau_1-R_\Delta(u_3,u_1)\tau_2+(0,\dr f),
\end{align*}
with (using \eqref{basicLike_eqq})
\begin{align*}
 f=&\langle\Delta_{(\rho,\rho^t)\tau_1}\tau_2-\nabla^{\rm bas}_{\tau_2}\tau_1,u_3\rangle+\rho(a_2)\langle \tau_1,u_3\rangle
+\langle \tau_2, \nabla_{a_1}^{\rm bas}u_3-(\rho,\rho^t)\Delta_{u_3}\tau_1\rangle\\
&-\rho(a_1)\langle \tau_2,u_3\rangle\\
=\,\,\,\,&-\langle\nabla^{\rm bas}_{\tau_2}\tau_1,u_3\rangle+\rho(a_2)\langle \tau_1,u_3\rangle
+\langle \tau_2, \nabla_{a_1}^{\rm bas}u_3-(\rho,\rho^t)\Delta_{u_3}\tau_1-\lb(\rho,\rho^t)\tau_1,u_3\rb_\Delta\rangle\\
\overset{\eqref{complicated_eqq}}
=&-\langle\nabla^{\rm bas}_{\tau_2}\tau_1,u_3\rangle+\rho(a_2)\langle \tau_1,u_3\rangle
-\langle\nabla_{\tau_2}^{\rm bas}u_3,\tau_1\rangle
-\langle \Skew_\Delta(u_3, (\rho,\rho^t)\tau_1),a_2\rangle\\
\overset{\eqref{not_dual}}
=&0.\qedhere
\end{align*}
\end{proof}

We have used Theorem~\ref{difficult_computation} for the Jacobi
identity. For completeness, we prove this theorem here.
\begin{proof}[Proof of Theorem~\ref{difficult_computation}]
  We start with the first equation. For simplicity, we write here
  $a$ for $(a,0)$, $\Delta_ua:=\Delta_u(a,0)$ and $\dr f:=(0,\dr f)$
  for $a\in\Gamma(A)$, $ f\in C^\infty(M)$ and $u\in\Gamma(U)$.
  First, we find that for $a_1,a_2\in\Gamma(A)$ and
  $u_1=(X_1,\alpha_1),u_2=(X_2,\alpha_2)\in\Gamma(U)$, the equation $0=\langle
  R_\Delta^{\rm bas}(a_1,a_2)u_1,u_2\rangle$ can be written
\begin{align*}
  0=&-\langle
  \Delta_{u_1}[a_1,a_2]-\dr\langle\alpha_1,[a_1,a_2]\rangle,u_2\rangle\\
&  +\rho(a_1)\langle\Delta_{u_1}a_2-\dr\langle\alpha_1,a_2\rangle,u_2\rangle-\langle\Delta_{u_1}a_2-\dr\langle\alpha_1,a_2\rangle,\ldr{a_1}u_2\rangle\\
  &-\rho(a_2)\langle\Delta_{u_1}a_1-\dr\langle\alpha_1,a_1\rangle,u_2\rangle+\langle\Delta_{u_1}a_1-\dr\langle\alpha_1,a_1\rangle,\ldr{a_2}u_2\rangle\\
  &+\langle \Delta_{(\rho,\rho^t)\Omega_{u_1}a_2+\ldr{a_2}u_1}a_1-\dr\langle(\rho,\rho^t)\Omega_{u_1}a_2+\ldr{a_2}u_1,a_1\rangle,u_2\rangle\\
  &-\langle \Delta_{(\rho,\rho^t)\Omega_{u_1}a_1+\ldr{a_1}u_1}a_2-\dr\langle(\rho,\rho^t)\Omega_{u_1}a_1+\ldr{a_1}u_1,a_2\rangle,u_2\rangle\\
  =&-\langle
  \Delta_{u_1}[a_1,a_2],u_2\rangle+X_2\langle\alpha_1,[a_1,a_2]\rangle\\
&+\rho(a_1)\langle\Delta_{u_1}a_2,u_2\rangle-\rho(a_1)X_2\langle\alpha_1,a_2\rangle -\langle\Delta_{u_1}a_2,\ldr{a_1}u_2\rangle+[\rho(a_1),X_2]\langle\alpha_1,a_2\rangle\\
&-\rho(a_2)\langle\Delta_{u_1}a_1,u_2\rangle+\rho(a_2)X_2\langle\alpha_1,a_1\rangle+\langle\Delta_{u_1}a_1,\ldr{a_2}u_2\rangle-[\rho(a_2),X_2]\langle\alpha_1,a_1\rangle\\
  &+\langle
  \Delta_{(\rho,\rho^t)\Omega_{u_1}a_2}a_1,u_2\rangle+[\rho(a_2),X_1]\langle
  a_1,\alpha_2\rangle -\langle
  a_1,\lb\ldr{a_2}u_1,u_2\rb_\Delta\rangle\\
& -X_2\langle(\rho,\rho^t)\Omega_{u_1}a_2+\ldr{a_2}u_1,a_1\rangle\\
  &-\langle
  \Delta_{(\rho,\rho^t)\Omega_{u_1}a_1}a_2,u_2\rangle-[\rho(a_1),X_1]\langle
  a_2,\alpha_2\rangle +\langle
  a_2,\lb\ldr{a_1}u_1,u_2\rb_\Delta\rangle\\
&  +X_2\langle(\rho,\rho^t)\Omega_{u_1}a_1+\ldr{a_1}u_1,a_2\rangle.
\end{align*}
We use 
\begin{align*}
&X_2\langle\alpha_1,[a_1,a_2]\rangle+\rho(a_2)X_2\langle\alpha_1,a_1\rangle-[\rho(a_2),X_2]\langle\alpha_1,a_1\rangle-X_2\langle\ldr{a_2}u_1,a_1\rangle\\
=\,&X_2(-\langle\alpha_1,[a_2,a_1]\rangle+\rho(a_2)\langle\alpha_1,a_1\rangle-\langle\ldr{a_2}\alpha_1,a_1\rangle)=0,
\end{align*}
similarly
\begin{align*}
-\rho(a_1)X_2\langle\alpha_1,a_2\rangle+[\rho(a_1),X_2]\langle\alpha_1,a_2\rangle+X_2\langle\ldr{a_1}u_1,a_2\rangle%
=-X_2\langle \alpha_1, [a_1,a_2]\rangle
\end{align*}
and, by \eqref{basicLike_eqq} and \eqref{not_dual},
\begin{align*}
  &\langle \Delta_{(\rho,\rho^t)\Omega_{u_1}a_2}a_1,u_2\rangle-X_2\langle(\rho,\rho^t)\Omega_{u_1}a_2,a_1\rangle\\
  =\,&\langle \nabla_{a_1}^{\rm bas}\Omega_{u_1}a_2+[\Omega_{u_1}a_2,a_1]_d, u_2\rangle-X_2\langle(\rho,\rho^t)\Omega_{u_1}a_2,a_1\rangle\\
  =\,&\rho(a_1)\langle\Omega_{u_1}a_2, u_2\rangle-\langle
       \Omega_{u_1}a_2, \nabla_{a_1}^{\rm bas}u_2\rangle\\
&-\langle\Skew_\Delta(u_2,(\rho,\rho^t)\Omega_{u_1}a_2),a_1\rangle-\langle\ldr{a_1}\Omega_{u_1}a_2, u_2\rangle\\
  =\,&-\langle \Omega_{u_1}a_2, (\rho,\rho^t)\Omega_{u_2}a_1\rangle-\langle\Skew_\Delta(u_2,(\rho,\rho^t)\Omega_{u_1}a_2),a_1\rangle\\
\end{align*}
to get
\begin{align*}
0=&-X_2\langle \alpha_1, [a_1,a_2]\rangle-X_1\langle\alpha_2, [a_1,a_2]\rangle+\langle [a_1,a_2], \lb u_1,u_2\rb_\Delta\rangle\\
&-\langle a_1,\lb\ldr{a_2}u_1,u_2\rb_\Delta\rangle
+\langle a_2,\lb\ldr{a_1}u_1,u_2\rb_\Delta\rangle\\
&+\rho(a_1)\langle\Delta_{u_1}a_2,u_2\rangle
-X_1\langle a_2,\ldr{a_1}u_2\rangle+\langle a_2, \lb u_1,\ldr{a_1}u_2\rb_\Delta\rangle\\
&-\rho(a_2)\langle\Delta_{u_1}a_1,u_2\rangle+X_1\langle a_1,\ldr{a_2}u_2\rangle-\langle a_1, \lb u_1,\ldr{a_2}u_2\rb_\Delta\rangle\\
&+[\rho(a_2),X_1]\langle a_1,\alpha_1\rangle-[\rho(a_1),X_1]\langle a_2,\alpha_2\rangle\\
&-\langle \Omega_{u_1}a_2, (\rho,\rho^t)\Omega_{u_2}a_1\rangle-\langle\Skew_\Delta(u_2,(\rho,\rho^t)\Omega_{u_1}a_2),a_1\rangle\\
&+\langle \Omega_{u_1}a_1, (\rho,\rho^t)\Omega_{u_2}a_2\rangle+\langle\Skew_\Delta(u_2,(\rho,\rho^t)\Omega_{u_1}a_1),a_2\rangle.
\end{align*}
But since 
\begin{align*}
&\rho(a_1)\langle\Delta_{u_1}a_2,u_2\rangle-[\rho(a_1),X_1]\langle a_2,\alpha_2\rangle
-X_1\langle a_2,\ldr{a_1}u_2\rangle\\
&=-\rho(a_1)\langle a_2,\lb u_1,u_2\rb_\Delta\rangle+X_1\langle[a_1,a_2],\alpha_2\rangle\\
&=-\langle [a_1,a_2],\lb u_1,u_2\rb_\Delta\rangle
-\langle a_2,\ldr{a_1}\lb u_1,u_2\rb_\Delta\rangle+X_1\langle[a_1,a_2],\alpha_2\rangle,
\end{align*}
we find
\begin{equation}\label{ugly_but_useful}
\begin{split}
  0=&-X_2\langle \alpha_1, [a_1,a_2]\rangle+X_1\langle\alpha_1,
  [a_1,a_2]\rangle-\langle
  [a_1,a_2], \lb u_1,u_2\rb_\Delta\rangle\\
  &-\langle a_1,\lb\ldr{a_2}u_1,u_2\rb_\Delta\rangle
  +\langle a_2,\lb\ldr{a_1}u_1,u_2\rb_\Delta\rangle+\langle a_2, \lb
  u_1,\ldr{a_1}u_2\rb_\Delta\rangle\\
  &-\langle a_1,
  \lb u_1,\ldr{a_2}u_2\rb_\Delta\rangle
  -\langle a_2, \ldr{a_1}\lb u_1,u_2\rb_\Delta\rangle+\langle a_1, \ldr{a_2}\lb u_1,u_2\rb_\Delta\rangle\\
  &-\langle \Omega_{u_1}a_2, (\rho,\rho^t)\Omega_{u_2}a_1\rangle-\langle\Skew_\Delta(u_2,(\rho,\rho^t)\Omega_{u_1}a_2),a_1\rangle\\
  &+\langle \Omega_{u_1}a_1,
  (\rho,\rho^t)\Omega_{u_2}a_2\rangle+\langle\Skew_\Delta(u_2,(\rho,\rho^t)\Omega_{u_1}a_1),a_2\rangle.
\end{split}
\end{equation}
Now, writing $\tau=(a,\theta)$, we compute:
\begin{align*}
&\nabla^{\rm bas}_\tau\lb u_1, u_2\rb_\Delta-\lb \nabla^{\rm bas}_\tau u_1,
u_2\rb_\Delta-\lb u_1, \nabla^{\rm bas}_\tau u_2\rb_\Delta+\nabla^{\rm
  bas}_{\Delta_{u_1}\tau}{u_2}-\nabla^{\rm bas}_{\Delta_{u_2}\tau}{u_1}\\
=\,\,\,\,&(\rho,\rho^t)\Omega_{\lb {u_1},{u_2}\rb_\Delta}a+\ldr{a}\lb {u_1}, {u_2}\rb_\Delta
-\lb(\rho,\rho^t)\Omega_{u_1}a+\ldr{a}{u_1}, {u_2}\rb_\Delta\\
&-\lb {u_1}, (\rho,\rho^t)\Omega_{u_2}a+\ldr{a}{u_2}\rb_\Delta+\nabla^{\rm bas}_{ \pr_A\Omega_{u_1}a}{u_2}-\nabla^{\rm bas}_{\pr_A\Omega_{u_2}a}{u_1}\\
\overset{\eqref{complicated_eqq}}=&(\rho,\rho^t)\bigl(\Delta_{\lb {u_1},{u_2}\rb_\Delta}a-\dr\langle a,\lb {u_1},{u_2}\rb_\Delta\rangle\bigr)
+\ldr{a}\lb {u_1}, {u_2}\rb_\Delta-\lb\ldr{a}{u_1}, {u_2}\rb_\Delta\\
&-\lb {u_1},
  \ldr{a}{u_2}\rb_\Delta+(\rho,\rho^t)\Delta_{{u_2}}\Omega_{u_1}a-\langle\nabla^{\rm
  bas}_\cdot {u_2},\Omega_{u_1}a\rangle
  -(\rho,\rho^t)\Delta_{{u_1}}\Omega_{u_2}a\\
&+\langle\nabla^{\rm bas}_\cdot {u_1},\Omega_{u_2}a\rangle-\Skew_\Delta((\rho,\rho^t)\Omega_{u_1}a,{u_2})\\
  =\,\,\,\,&-(\rho,\rho^t)R_\Delta({u_1},{u_2})a-\rho^t\dr\langle
             a,\lb {u_1},{u_2}\rb_\Delta\rangle+\ldr{a}\lb {u_1},
             {u_2}\rb_\Delta-\lb\ldr{a}{u_1}, {u_2}\rb_\Delta\\
&-\lb {u_1}, \ldr{a}{u_2}\rb_\Delta-\rho^t\dr(X_2\langle \alpha_1,a\rangle)-\langle\nabla^{\rm bas}_\cdot
  {u_2},\Omega_{u_1}a\rangle
  +\rho^t\dr (X_1\langle \alpha_2,a\rangle)\\
&+\langle\nabla^{\rm
  bas}_\cdot {u_1},\Omega_{u_2}a\rangle-\Skew_\Delta((\rho,\rho^t)\Omega_{u_1}a,{u_2}).
\end{align*}
We write the right-hand side of this equation
\[-(\rho,\rho^t)R_\Delta({u_1},u_2)a+\tau'
\]
with $\tau'\in\Gamma(TM\oplus A^*)$. Since 
$\pr_{TM}\bigl(\ldr{a}\lb u_1, u_2\rb_\Delta-\lb\ldr{a}u_1, u_2\rb_\Delta-\lb u_1, \ldr{a}u_2\rb_\Delta\bigr)$
is $[\rho(a),[X_1,X_2]]-[[\rho(a),X_1],X_2]-[X_1,[\rho(a),X_2]]$, which
vanishes, we observe that
$\pr_{TM}\tau'=0$. Hence, we just have to show that $\langle \tau', a'\rangle=0$
for any section $a'\in\Gamma(A)$.
We have 
\begin{align*}
\langle \tau', {a'}\rangle=\,&-\rho({a'})\langle a,\lb u_1,{u_2}\rb_\Delta\rangle+\langle\ldr{a}\lb u_1, {u_2}\rb_\Delta-\lb\ldr{a}u_1, {u_2}\rb_\Delta-\lb u_1, \ldr{a}{u_2}\rb_\Delta, {a'}\rangle\\
&-\rho({a'})X_2\langle \alpha_1,a\rangle-\langle\nabla^{\rm bas}_{a'} {u_2},\Omega_{u_1}a\rangle+\rho({a'})X_1\langle \alpha_2,a\rangle
+\langle\nabla^{\rm bas}_{a'} u_1,\Omega_{u_2}a\rangle\\
&-\langle\Skew_\Delta((\rho,\rho^t)\Omega_{u_1}a,{u_2}),{a'}\rangle\\
=\,&\langle [a,{a'}],\lb u_1,{u_2}\rb_\Delta\rangle-\langle
     a,\ldr{{a'}}\lb u_1,{u_2}\rb_\Delta\rangle\\
&+\langle\ldr{a}\lb u_1, {u_2}\rb_\Delta-\lb\ldr{a}u_1, {u_2}\rb_\Delta-\lb u_1, \ldr{a}{u_2}\rb_\Delta, {a'}\rangle\\
&-\rho({a'})X_2\langle \alpha_1,a\rangle-\langle(\rho,\rho^t)\Omega_{u_2}{a'}+\ldr{{a'}}{u_2},\Omega_{u_1}a\rangle\\
&+\rho({a'})X_1\langle \alpha_2,a\rangle+\langle(\rho,\rho^t)\Omega_{u_1}{a'}+\ldr{{a'}}u_1,\Omega_{u_2}a\rangle\\
&-\langle\Skew_\Delta((\rho,\rho^t)\Omega_{u_1}a,{u_2}),{a'}\rangle.
\end{align*}
Since the definition of $\Omega$ and the duality of $\Delta$ and $\lb\cdot\,,\cdot\rb_\Delta$ yield
\begin{align*}
\langle\ldr{{a'}}u_1,\Omega_{u_2}a\rangle+\rho({a'})X_1\langle \alpha_2,a\rangle
&=\langle\ldr{{a'}}u_1, \Delta_{u_2}a\rangle+X_1\rho({a'})\langle \alpha_2, a\rangle\\
&=X_2\langle\ldr{{a'}}u_1, a\rangle-\langle\lb u_2,\ldr{{a'}}u_1\rb_\Delta, a\rangle+X_1\rho({a'})\langle \alpha_2, a\rangle, 
\end{align*}
we get
\begin{align*}
\langle \tau', {a'}\rangle=\,&\langle [a,{a'}],\lb
                               {u_1},{u_2}\rb_\Delta\rangle-\langle
                               a,\ldr{{a'}}\lb
                               {u_1},{u_2}\rb_\Delta\rangle\\
&+\langle\ldr{a}\lb u_1, {u_2}\rb_\Delta-\lb\ldr{a}u_1, {u_2}\rb_\Delta-\lb u_1, \ldr{a}{u_2}\rb_\Delta, {a'}\rangle\\
&+\langle(\rho,\rho^t)\Omega_{u_1}{a'},\Omega_{u_2}a\rangle-\langle(\rho,\rho^t)\Omega_{u_2}{a'},\Omega_{u_1}a\rangle-\langle\Skew_\Delta((\rho,\rho^t)\Omega_{u_1}a,{u_2}),{a'}\rangle\\
&-X_1\langle\ldr{{a'}}\alpha_2, a\rangle+\langle\lb {u_1},\ldr{{a'}}{u_2}\rb_\Delta, a\rangle-X_2\rho({a'})\langle \alpha_1, a\rangle\\
&+X_2\langle\ldr{{a'}}\alpha_1, a\rangle-\langle\lb {u_2},\ldr{{a'}}{u_1}\rb_\Delta, a\rangle+X_1\rho({a'})\langle \alpha_2, a\rangle\\
=\,&\langle [a,{a'}],\lb {u_1},{u_2}\rb_\Delta\rangle-\langle a,\ldr{{a'}}\lb {u_1},{u_2}\rb_\Delta-\lb {u_1},\ldr{{a'}}{u_2}\rb_\Delta+\lb {u_2},\ldr{{a'}}{u_1}\rb_\Delta\rangle\\
&+\langle\ldr{a}\lb {u_1}, {u_2}\rb_\Delta-\lb\ldr{a}{u_1}, {u_2}\rb_\Delta-\lb {u_1}, \ldr{a}{u_2}\rb_\Delta, {a'}\rangle\\
&+\langle(\rho,\rho^t)\Omega_{u_1}{a'},\Omega_{u_2}a\rangle-\langle(\rho,\rho^t)\Omega_{u_2}{a'},\Omega_{u_1}a\rangle\\
&-\langle\Skew_\Delta((\rho,\rho^t)\Omega_{u_1}a,{u_2}),{a'}\rangle-X_1\langle\alpha_2,[a,{a'}]\rangle+X_2\langle\alpha_1,[a,{a'}]\rangle.
\end{align*}
We have 
\[-\langle a,\lb {u_2},\ldr{{a'}}{u_1}\rb_\Delta\rangle=\langle a,\lb \ldr{{a'}}{u_1},
{u_2}\rb_\Delta\rangle-\langle\Skew_\Delta(\ldr{{a'}}{u_1}, {u_2}),a\rangle
\]
and 
since $\nabla^{\rm bas}_{a'}{u_1}\in\Gamma(U)$ by hypothesis, we find 
\[-\Skew_\Delta(\ldr{{a'}}{u_1},
{u_2})=-\Skew_\Delta(\ldr{{a'}}{u_1}-\nabla^{\rm
  bas}_{a'}{u_1}, {u_2})=\Skew_\Delta((\rho,\rho^t)\Omega_{u_1}{a'}, {u_2}).
\]
This shows that $\langle \tau',{a'}\rangle$ is equal to the right-hand side
of \eqref{ugly_but_useful} (with $a=a_1$, $a'=a_2$), which vanishes.

\bigskip

For the second equation we write $u=(X,\alpha)$ and $\tau_i=(a_i,\theta_i)$ for $i=1,2$: 
\begin{align*}
  &\Delta_u[\tau_1,\tau_2]_d-[\Delta_u\tau_1,\tau_2]_d-[\tau_1,\Delta_u\tau_2]_d
  +\Delta_{\nabla_{a_1}^{\rm bas}u}\tau_2-\Delta_{\nabla_{a_2}^{\rm
      bas}u}\tau_1+(0,\dr\langle\tau_1,\nabla^{\rm bas}_{a_2}u\rangle)\\
  =\,&\Omega_u[a_1,a_2]+(0,\ldr{X}(\ldr{\rho(a_1)}\theta_2-\ip{\rho(a_2)}\dr\theta_1)+\dr\langle\alpha,[a_1,a_2]\rangle)\\
  &-[\Omega_ua_1+(0,\ldr{X}\theta_1+\dr\langle\alpha,a_1\rangle),\tau_2]_d
  -[\tau_1, \Omega_ua_2+(0,\ldr{X}\theta_2+\dr\langle\alpha,a_2\rangle)]_d\\
  &+\Omega_{\nabla_{a_1}^{\rm bas}u}a_2+(0,\ldr{\pr_{TM}(\nabla_{a_1}^{\rm bas}u)}\theta_2+\dr\langle \nabla_{a_1}^{\rm bas}u, (a_2,0)\rangle)\\
  &-\Omega_{\nabla_{a_2}^{\rm bas}u}a_1-(0,\ldr{\pr_{TM}(\nabla_{a_2}^{\rm bas}u)}\theta_1+\dr\langle \nabla_{a_2}^{\rm bas}u, (a_1,0)\rangle)+(0,\dr\langle\tau_1,\nabla^{\rm bas}_{a_2}u\rangle)\\
  =\,&-R_\Delta^{\rm bas}(a_1,a_2)u
  +(0,\ldr{X}(\ldr{\rho(a_1)}\theta_2-\ip{\rho(a_2)}\dr\theta_1)+\dr\langle\alpha,[a_1,a_2]\rangle)\\
  &+(0, \ip{\rho(a_2)}\dr\ldr{X}\theta_1-\ldr{\rho\circ\pr_A\Omega_ua_1}\theta_2-\dr\langle (\rho,\rho^t)\Omega_ua_1,(a_2,0)\rangle)\\
  &-(0,\ldr{\rho(a_1)}\ldr{X}\theta_2+\ldr{\rho(a_1)}\dr\langle\alpha,a_2\rangle+\ip{\rho\circ\pr_A\Omega_ua_2}\dr\theta_1)\\
  &+(0,\ldr{\pr_{TM}(\nabla_{a_1}^{\rm bas}u)}\theta_2+\dr\langle \nabla_{a_1}^{\rm bas}u, (a_2,0)\rangle)\\
  &-(0,\ldr{\pr_{TM}(\nabla_{a_2}^{\rm bas}u)}\theta_1+\dr\langle
  \nabla_{a_2}^{\rm bas}u,
  (a_1,0)\rangle)+(0,\dr\langle\tau_1,\nabla^{\rm bas}_{a_2}u\rangle).
\end{align*}
But we have 
\begin{align*}
  &\ldr{X}\ldr{\rho(a_1)}\theta_2-\ldr{\rho\circ\pr_A\Omega_ua_1}\theta_2-\ldr{\rho(a_1)}\ldr{X}\theta_2+\ldr{\pr_{TM}(\nabla_{a_1}^{\rm bas}u)}\theta_2\\
  =\,&\ldr{[X,\rho(a_1)]-\rho\circ\pr_A\Omega_ua_1+\pr_{TM}(\nabla_{a_1}^{\rm
      bas}u)}\theta_2=0
\end{align*}
since $\pr_{TM}(\nabla_{a_1}^{\rm bas}u)=[\rho(a_1),X]+\rho\circ\pr_A\Omega_ua_1$,
and in the same manner
\begin{align*}
&-\ldr{X}\ip{\rho(a_2)}\dr\theta_1+\ip{\rho(a_2)}\dr\ldr{X}\theta_1+\ip{\rho\circ\pr_A\Omega_ua_2}\dr\theta_1-\ldr{\pr_{TM}(\nabla_{a_2}^{\rm bas}u)}\theta_1\\
&\qquad +\dr\langle \nabla_{a_2}^{\rm bas}u, (a_1,0)\rangle-\dr\langle\tau_1,\nabla^{\rm bas}_{a_2}u\rangle\\
=\,&\ip{[\rho(a_2),X]+\rho\circ\pr_A\Omega_ua_2}\dr\theta_1-\ldr{\pr_{TM}(\nabla_{a_2}^{\rm bas}u)}\theta_1-\dr\langle(0,\theta_1),\nabla^{\rm bas}_{a_2}u\rangle=0.
\end{align*}
Since \[-\langle (\rho,\rho^t)\Omega_ua_1,(a_2,0)\rangle+\langle \nabla_{a_1}^{\rm bas}u, (a_2,0)\rangle=
\langle \ldr{a_1}u,(a_2,0)\rangle=\langle \ldr{a_1}\alpha,a_2\rangle\]
and 
\[\ldr{\rho(a_1)}\dr\langle\alpha,a_2\rangle=\dr(\rho(a_1)\langle\alpha,a_2\rangle)=\dr(\langle\ldr{a_1}\alpha,a_2\rangle+\langle\alpha, [a_1,a_2]\rangle),
\]
the remaining sum 
\begin{align*}
\dr\langle\alpha,[a_1,a_2]\rangle-\dr\langle (\rho,\rho^t)\Omega_ua_1,(a_2,0)\rangle-\ldr{\rho(a_1)}\dr\langle\alpha,a_2\rangle+\dr\langle \nabla_{a_1}^{\rm bas}u, (a_2,0)\rangle
\end{align*}
vanishes as well.
\end{proof}

\def\cprime{$'$} \def\polhk#1{\setbox0=\hbox{#1}{\ooalign{\hidewidth
  \lower1.5ex\hbox{`}\hidewidth\crcr\unhbox0}}} \def\cprime{$'$}
  \def\cprime{$'$}
\providecommand{\bysame}{\leavevmode\hbox to3em{\hrulefill}\thinspace}
\providecommand{\MR}{\relax\ifhmode\unskip\space\fi MR }
\providecommand{\MRhref}[2]{%
  \href{http://www.ams.org/mathscinet-getitem?mr=#1}{#2}
}
\providecommand{\href}[2]{#2}

\end{document}